\def\@thm#1#2#3{%
  \ifhmode\unskip\unskip\par\fi
  \normalfont
  \trivlist
  \let\thmheadnl\relax
  \let\thm@swap\@gobble
  \let\thm@indent\indent % no indent
  \thm@headfont{\scshape}% heading font bold
  %\thm@notefont{\fontseries\mddefault\upshape}%
  \thm@notefont{}%
  \thm@headpunct{.}% add period after heading
  \thm@headsep 5\p@ plus\p@ minus\p@\relax
  \thm@preskip\topsep
  \thm@postskip\thm@preskip
  #1% style overrides
  \@topsep \thm@preskip               % used by thm head
  \@topsepadd \thm@postskip           % used by \@endparenv
  \def\@tempa{#2}\ifx\@empty\@tempa
    \def\@tempa{\@oparg{\@begintheorem{#3}{}}[]}%
  \else
    \refstepcounter{#2}%
    \def\@tempa{\@oparg{\@begintheorem{#3}{\csname the#2\endcsname}}[]}%
  \fi
  \@tempa
}
\newcommand{\e}{\epsilon}
\newcommand{\p}{\pi}
\newcommand{\s}{\sigma}
\newcommand{\const}{\tup{const}}
\newcommand{\msp[1]}[1]{\mspace{#1mu}}
\newcommand{\R}[1][n+1]{{\protect\mathbb R}^{#1}}
\newcommand{\N}{{\protect\mathbb N}}
\newcommand{\eR}{\stackrel{\lower1ex \hbox{\rule{6.5pt}{0.5pt}}}{\msp[3]\R[]}}
\newcommand{\eN}{\stackrel{\lower1ex \hbox{\rule{6.5pt}{0.5pt}}}{\msp[1]\N}}
\newcommand{\eO}{\stackrel{\lower1ex
\hbox{\rule{6pt}{0.5pt}}}{\msc O}}
\DeclareMathOperator{\graph}{graph}
\DeclareMathOperator{\osc}{osc}
\newcommand{\uu}{\cup}
\newcommand{\uuu}{\bigcup}
\newcommand{\uud}{ \stackrel{\lower 1ex \hbox {.}}{\uu}}
\newcommand{\uuud}[1]{ \stackrel{\lower 1ex \hbox {.}}{\uuu_{#1}}}
\newcommand{\sminus}[1][28]{\raise 0.#1ex\hbox{$\scriptstyle\setminus$}}
\newcommand{\abs}[1]{\lvert#1\rvert}
\newcommand{\norm}[1]{\lVert#1\rVert}
\newcommand{\tup}{\textup}% text upright
\newcommand{\msc}{\protect\mathscr}
\providecommand{\bysame}{\makeboc[3em]{\hrulefill}\thinspace}
\newcommand{\bt}{\begin{thm}}
\newcommand{\bl}{\begin{lem}}
\newcommand{\bc}{\begin{cor}}
\newcommand{\bd}{\begin{definition}}
\newcommand{\bpp}{\begin{prop}}
\newcommand{\br}{\begin{rem}}
\newcommand{\bn}{\begin{note}}
\newcommand{\be}{\begin{ex}}
\newcommand{\bes}{\begin{exs}}
\newcommand{\bb}{\begin{example}}
\newcommand{\bbs}{\begin{examples}}
\newcommand{\ba}{\begin{axiom}}
\newcommand{\et}{\end{thm}}
\newcommand{\el}{\end{lem}}
\newcommand{\ec}{\end{cor}}
\newcommand{\ed}{\end{definition}}
\newcommand{\epp}{\end{prop}}
\newcommand{\er}{\end{rem}}
\newcommand{\en}{\end{note}}
\newcommand{\ee}{\end{ex}}
\newcommand{\ees}{\end{exs}}
\newcommand{\eb}{\end{example}}
\newcommand{\ebs}{\end{examples}}
\newcommand{\ea}{\end{axiom}}
\newcommand{\bp}{\begin{proof}}
\newcommand{\ep}{\end{proof}}
\newcommand{\eps}{\renewcommand{\qed}{}\end{proof}}
\newcommand{\bal}{\begin{align}}
\newcommand{\bi}[1][1.]{\begin{enumerate}[\upshape #1]}
\newcommand{\bia}[1][(1)]{\begin{enumerate}[\upshape #1]}
\newcommand{\bin}[1][1]{\begin{enumerate}[\upshape\bfseries #1]}
\newcommand{\bir}[1][(i)]{\begin{enumerate}[\upshape #1]}
\newcommand{\bic}[1][(i)]{\begin{enumerate}[\upshape\hspace{2\cma}#1]}
\newcommand{\bis}[2][1.]{\begin{enumerate}[\upshape\hspace{#2\parindent}#1]}
\newcommand{\ei}{\end{enumerate}}
\newcommand\ndots{\raise 0.47ex \hbox {,}\hskip0.06em\cdots %
     \raise 0.47ex \hbox {,}\hskip0.06em} 
\newcommand{\hp}{\hphantom}
\newcommand\nd{\noindent}
\newskip\Csmallskipamount                                                
\newskip\Cmedskipamount
\newskip\Cbigskipamount
\newcommand\cvs{\vspace\Csmallskipamount}   
\newcommand\cvm{\vspace\Cmedskipamount}
\newskip\csa
\newskip\cma
\newskip\cba
\newdimen\spt
\newcommand\citem{\cvs\advance\itemno by
1{(\romannumeral\the\itemno})\hskip3pt}
\newcommand{\bitem}{\cvm\nd\advance\itemno by
1{\bf\the\itemno}\hspace{\cma}}
\newcommand{\ann}[1]{\renewcommand{\@makefnmark}{\mbox{$^{\color{red}{\@thefnmark}}$}}%
\footnote {#1}}
\newlength{\oddsidemarginlength}
\newlength{\topmarginlength}
\newcounter{numberoflines}
\newcounter{tempcc}
\begin{document}
\clearpage\pagenumbering{arabic}

%\larger[1]
%\frontmatter

\title[Non-scale-invariant ICF in hyperbolic space]{Non-scale-invariant inverse curvature flows in hyperbolic space}

% author one information
\author{Julian Scheuer}

%\urladdr{\href{http://web.me.com/gerhardt/}{http://web.me.com/gerhardt/}}
%\thanks{This work has been supported by the DFG}
%\dedicatory{}

% author two information
%\author{}
%\address{}
%\curraddr{}
%\email{}
%\thanks{}
%
\subjclass[2000]{35J60, 53C21, 53C44, 53C50, 58J05.}
\keywords{curvature flows, inverse curvature flows, hyperbolic space.}
\thanks{This work has been supported by the DFG}
\date{\today}
\address{Ruprecht-Karls-Universit\"at, Institut f\"ur Angewandte Mathematik,
Im Neuenheimer Feld 294, 69120 Heidelberg, Germany}
%\curraddr{eee}
\email{scheuer@math.uni-heidelberg.de}

%
% at present the "communicated by" line appears only in ERA and PROC
%\commby{}

%\dedicatory{}

\swapnumbers

\newtheorem{props}{Proposition}[section]
\newtheorem{cors}[props]{Corollary}
\newtheorem{rems}[props]{Remark}
\newtheorem{lems}[props]{Lemma}
\newtheorem{thms}[props]{Theorem}

\theoremstyle{definition}
\newtheorem{defns}[props]{Definition}

\newtheorem{propss}{Proposition}[%sub
section]
\newtheorem{corss}[propss]{Corollary}
\newtheorem{remss}[propss]{Remark}
\newtheorem{lemss}[propss]{Lemma}
\newtheorem{thmss}[propss]{Theorem}
\newtheorem{definition}[propss]{Definition}
\newtheorem{thm}[propss]{Theorem}

\numberwithin{equation}{section}

\renewcommand{\H}{\mathbb{H}}
\renewcommand{\R}{\mathbb{R}}
\renewcommand{\S}{\mathbb{S}}
\renewcommand{\k}{\kappa}
\newcommand{\del}{\partial}
\renewcommand{\e}{\epsilon}
\renewcommand{\~}{\tilde}
\renewcommand{\-}{\bar}
\renewcommand{\a}{\alpha}
\renewcommand{\b}{\beta}
\newcommand{\g}{\gamma}
\renewcommand{\d}{\delta}
\renewcommand{\t}{\vartheta}
\renewcommand{\p}{\varphi}

\renewcommand{\O}{\mathcal{O}}

\newcommand{\eq}{\begin{equation}}
\newcommand{\eeq}{\end{equation}}

\hspace{10 cm}

\parindent 0 pt

\begin{abstract} 
We consider inverse curvature flows in hyperbolic space $\H^{n+1}$ with starshaped initial hypersurface, driven by positive powers of a homogeneous curvature function.\ \\ The solutions exist for all time and, after rescaling, converge to a sphere.%\ \\

%Wir betrachten inverse Kr\"ummungsfl\"usse im hyperbolischen Raum $\H^{n+1}$ mit sternf\"ormiger Starthyperfl\"ache, die durch positive Potenzen einer homogenen Kr\"ummungsfunktion bewegt wird.\ \\
%Die L\"osungen existieren f\"ur alle Zeiten und konvergieren nach Reskalierung hin zu einer Sph\"are.
\end{abstract}

\maketitle

\makeatletter
\def\l@subsection{\@tocline{2}{0pt}{2.5pc}{5pc}{}} % 2.5pc statt 1pc
\makeatother

\tableofcontents

\vspace{0,0001 cm}

\section{Introduction}
During the last decades geometric flows have been studied intensively. Following the ground breaking work of Huisken, \cite{gh:mean}, who considered the mean curvature flow, several authors started to investigate inverse, or expanding flows, e.g. \cite{cg90}, in which nonconvex hypersurfaces were shown to be driven into spheres.
This work, as well as \cite{cg:ImcfHyp}, heavily relied on the homogeneity of the curvature function, leading to, at least in Euclidean space, scale invariance of the flow. In both of these settings, the spherical flows exist for all time and thus dictate the behavior of the solution.\ \\
In  \cite{cg:Impf} an inverse flow driven by arbitrary positive powers of a homogeneous curvature function was considered in $\R^{n+1}$ and for $p>1$ blow up in finite time was proven.\ \\
In the present work we also consider this kind of flow,
$$\dot{x}=F^{-p}\nu,\ 0<p<\infty,$$
in hyperbolic space $\H^{n+1},$ $n\geq 2.$ For $p=1$ this has been treated in \cite{cg:ImcfHyp}, as well as in \cite{ding:hyperbolic} for mean curvature, however in the latter work the obtained convergence results are of less strength.  This flow behaves quite differently compared to the Euclidian case, since the curvature of a geodesic sphere is bounded below by $1,$ so that the flow exists for all time, regardless of the value of $p.$\ \\
In order to formulate the main result of this work, we first need a definition.

\begin{defns} \label{admissable}
Let $\Gamma\subset\R^{n}$ be an open, symmetric and convex cone and $F\in C^{\infty}(\Gamma)$ a symmetric function. A hypersurface $M_{0}\subset\H^{n+1}$ is called \textit{F-admissable}, if %\ \\
%(1) $M_{0}$ is starshaped with respect to some $q\in\H^{n+1},$ i.e. $$M_{0}=\graph u=\{(u(x),x)\colon x\in S^{n}\},$$
%where, after introducing geodesic polar coordinates around $q,$ $S^{n}$ is a geodesic sphere, which we identify with $\S^{n},$ cf. %\cite{cg:cp}, Thm. 1.7.5., and \ \\
%(2)% 
at any point $x\in M_{0}$ the principal curvatures of $M_{0},$ $\k_{1},...,\k_{n},$ are contained in $\Gamma.$
\end{defns}

We now state our main result.

\begin{thms} \label{mainthm}
Let $\Gamma\subset\R^{n}$ be a symmetric, convex and open cone, such\hspace{0,1 cm} that 
\eq \Gamma_{+}=\{(\k_{i})\in\R^{n}\colon \k_{i}>0\ \forall 1\leq i\leq n\}\subset\Gamma \eeq
and $F\in C^{\infty}(\Gamma)\cap C^{0}(\-{\Gamma})$ be a monotone, $1$-homogeneous and concave curvature function, such that 
\eq F_{|\Gamma}>0, F_{|\del\Gamma}=0\ \ \and\ F(1,...,1)=n. \eeq 
Let $p>0$ and in case $p>1$ suppose $\Gamma=\Gamma_{+}.$ Let $M\hookrightarrow M_{0}\subset \H^{n+1}$ be a smooth and $F$-admissable embedded closed hypersurface, which can be written as a graph over a geodesic sphere, identified with $\S^{n},$
\eq M_{0}=\graph u(0,\cdot).\eeq Then\ \\
(1) there is a unique smooth curvature flow $$x\colon [0,\infty)\times M\rightarrow \H^{n+1},$$ which satisfies the flow equation
\begin{align} \begin{split} \label{floweq} \dot{x}&=-\Phi(F)\nu, \\
						x(0)&=M_{0}, \end{split} \end{align}
where $\nu(t,\xi)$ is the outward normal to $M_{t}=x(t,M)$ at $x(t,\xi),$ $F$ is evaluated at the principal curvatures of $M_{t}$ in $x(t,\xi),$  
\eq \Phi(r)=-r^{-p} \eeq and the leaves $M_{t}$ are graphs over $\S^{n},$ 
\eq M_{t}=\graph u(t,\cdot). \eeq
(2) %There exists $p_{0}>1,$ such that
 For all $0<p\leq 1$ the leaves $M_{t}$ become more and more umbilic, namely
\eq |h^{i}_{j}-\delta^{i}_{j}|\leq ce^{-\frac{2}{n^{p}}t},\ c=c(n, p,M_{0}). \eeq
In case $p>1$ there exists $\e=\e(n, p, M_{0}),$ such that the same conclusion holds, if we impose the $C^{0}$-pinching condition 
\eq \osc u(0,\cdot)<\e. \eeq
(3) Under the appropriate conditions as in (2) we obtain, that the rescaled surfaces 
\eq \hat{M}_{t}=\graph\left(u-\frac{t}{n^{p}}\right) \eeq
converge to a well defined, smooth function in $C^{\infty}$ and thus the rescaled surfaces
\eq \~{M}_{t}=\graph\frac{u}{t}\eeq
converge to a geodesic sphere in $C^{\infty}.$

\end{thms}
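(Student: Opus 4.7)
The plan is to reduce the hypersurface flow to a scalar quasilinear parabolic equation for the radial function $u(t,\xi)$ on $\mathbb{S}^n$. In geodesic polar coordinates the metric of $\mathbb{H}^{n+1}$ is $dr^2+\sinh^2(r)\sigma_{ij}d\xi^id\xi^j$, and writing $M_t=\graph u(t,\cdot)$ the flow equation \eqref{floweq} becomes
\[
\dot u = -\frac{v}{\sinh u}\,\Phi(F),\qquad v=\sqrt{1+\sinh^{-2}(u)|Du|^2},
\]
so short-time existence and uniqueness of the graph representation follow from standard theory for strictly parabolic scalar equations (this is how I would set up part~(1) locally in time).

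For the a priori estimates leading to long-time existence, I would first use barriers: the spherical solutions with radius $\rho(t)$ satisfying $\dot\rho=(n\coth\rho)^{-p}$ enclose the initial surface from inside and outside, so by the maximum principle and the $F$-admissibility (which ensures $F>0$) one gets a $C^0$-estimate of the form $\rho_-(t)\leq u(t,\cdot)\leq \rho_+(t)$, with both $\rho_\pm(t)\sim t/n^p$. Next, a gradient estimate for $|Du|^2/\sinh^2u$ (equivalently for $v-1$) follows in the usual way by computing its evolution and using the $C^0$-bounds; the key sign comes from the hyperbolic term $\cosh u>1$, which drives $v\to 1$ exponentially. For the second order estimate, I would work with the mixed tensor $h^i_j$: its evolution is a linear parabolic equation with principal part $\Phi'F^{ij}\nabla_i\nabla_j h^k_l$ plus zeroth-order terms. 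Using concavity of $F$ and the $C^0$ and $C^1$ bounds, one applies a maximum principle argument (for $p\leq 1$ directly; for $p>1$ this requires keeping $\kappa_i$ strictly positive, which is where the $C^0$-pinching hypothesis $\osc u(0,\cdot)<\varepsilon$ enters, producing $\kappa_i\geq c>0$ uniformly). Krylov--Safonov and Schauder theory then give higher-order estimates, and long-time existence follows by continuation.

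The heart of the argument is part~(2), the asymptotic umbilicity. I would compute the evolution equation of the traceless part $\mathring h^i_j=h^i_j-\tfrac{H}{n}\delta^i_j$ or equivalently of $|h^i_j-\delta^i_j|^2$. The crucial calculation uses that on a sphere $h^i_j=\coth(r)\delta^i_j$, so the ambient curvature $-1$ of $\mathbb{H}^{n+1}$ contributes a Simons-type term whose sign, together with $v-1$ already decaying, gives
\[
\Bigl(\tfrac{\partial}{\partial t}-\Phi'F^{ij}\nabla_i\nabla_j\Bigr)\bigl|h^i_j-\delta^i_j\bigr|^2 \leq -\tfrac{2}{n^p}\,\bigl|h^i_j-\delta^i_j\bigr|^2+\text{(lower order)},
\]
after normalisation by $F=n$ on umbilic round spheres and using $\Phi'(n)=p\,n^{-p-1}$ combined with $F^{ij}\delta_{ij}=n$. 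Integrating the maximum principle inequality produces the stated decay $ce^{-2t/n^p}$.

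Part~(3) will then follow by bootstrapping: once $h^i_j\to\delta^i_j$ exponentially and $u\sim t/n^p$ is known, the function $\hat u=u-t/n^p$ satisfies a scalar equation whose right-hand side decays exponentially in $t$; hence $\hat u$ is Cauchy in $C^0$. Interpolation between the uniform higher-order estimates from part~(1) and the exponential $C^0$-decay of $\partial_t\hat u$ yields convergence in $C^\infty$, from which the corresponding statement for $u/t$ is immediate. The main obstacle I expect is the $C^2$-estimate in the regime $p>1$ with $\Gamma=\Gamma_+$: without the $C^0$-pinching one cannot prevent the smallest principal curvature from touching $0$ (where $\Phi'F^{ij}$ degenerates), so the delicate point is to show that $\varepsilon$ in the pinching hypothesis can be chosen so that strict positivity of $\kappa_i$ is preserved for all time.
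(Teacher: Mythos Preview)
Your overall architecture is right, but you have misidentified where the $C^0$-pinching hypothesis enters, and this matters for the logic of the argument. In the paper, long-time existence (part~(1)) holds for all $p>0$ \emph{without} any pinching. For $p>1$ with $\Gamma=\Gamma_+$ the leaves are convex by assumption, and the uniform gradient bound follows directly from convexity via the estimate $v\le e^{\bar\kappa\,\osc u}$ (Lemma~3.7), not from the pinching. The lower bound on $F$ (Proposition~3.11) and the upper bound on $\kappa_n$ (Proposition~3.12) then keep the curvatures in a compact subset of $\Gamma_+$, so $\kappa_i\ge c>0$ is automatic; there is no danger of the operator degenerating, and Krylov--Safonov closes the argument. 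Your ``main obstacle'' is therefore not an obstacle at all.

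The pinching is needed precisely for the \emph{decay} of $v-1$ (Theorem~4.1), which is an input to everything in part~(2). The evolution of $v$ produces, at a spatial maximum, a term proportional to $\frac{\bar H}{n}F^{-p}\bigl(p(v-1)-(v+1)\bigr)$. For $p\le 1$ this has the right sign unconditionally, but for $p>1$ it is only negative if $v<\frac{p+1}{p-1}$. The pinching $\osc u(0,\cdot)<\varepsilon$ is exactly what guarantees $v<\frac{p+1}{p-1}$ for all time (via $v\le e^{\bar\kappa\,\osc u}$ and the barrier control on $\osc u$). Your claim that ``the hyperbolic term $\cosh u>1$ drives $v\to 1$ exponentially'' in the usual way is therefore too optimistic for $p>1$; without pinching, this step fails.

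A secondary point: your route to part~(2) tries to jump directly to the sharp rate $e^{-2t/n^p}$ for $|h^i_j-\delta^i_j|$. The paper cannot do this in one step. It first proves the qualitative convergence $\kappa_i\to 1$ (Lemma~4.3, via a Lipschitz ODE-type argument applied to carefully chosen test functions), then bootstraps: a crude exponential rate $e^{-\lambda t}$ for some $\lambda>0$, and only then the optimal rate (Theorems~4.4--4.5). The reason is that the coefficient of the linear term in the evolution of $|h^i_j-\delta^i_j|^2$ is $-4F^{-p}+\ldots$, which equals $-4/n^p$ only asymptotically; you need to know the curvatures are already close to $1$ before you can extract the sharp constant. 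Your Simons-type inequality as stated is not available without this preliminary step.

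Minor: your scalar equation has a spurious $1/\sinh u$; the correct form is $\dot u=F^{-p}v$ (the $\sinh^{-1}$ appears only after passing to $\varphi=\int\sinh^{-1}$). Also, for part~(3) the paper does not interpolate between $C^0$ decay and uniform $C^k$ bounds; it derives genuine exponential $C^k$ decay for $D^m\varphi$ by differentiating the scalar equation $m$ times (Section~5), then passes to the conformally flat model of $\mathbb{H}^{n+1}$ to identify the limit (Section~6). Your interpolation idea might work, but it is a different route and would need the uniform higher-order bounds to be established independently of the decay.
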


\section{Setting and general facts}
We now state some general facts about hypersurfaces, especially those that can be written as graphs. We basically follow the description of \cite{cg:ImcfHyp}, but restrict to Riemannian manifolds. For a detailed discussion we refer to \cite{cg:cp}. \ \\
Let $N=N^{n+1}$ be Riemannian and $M=M^{n}\hookrightarrow N$ be a hypersurface. The geometric quantities of $N$ will be denoted by $(\-{g}_{\a\b}),$ $(\-{R}_{\a\b\g\d})$ etc., where greek indices range from $0$ to $n$. Coordinate systems in $N$ will be denoted by $(x^{\a}).$\ \\
Quantities for $M$ will be denoted by $(g_{ij}),$ $(h_{ij})$ etc., where latin indices range from $1$ to $n$ and coordinate systems will generally be denoted by $(\xi^{i}),$ unless stated otherwise.\ \\
Covariant differentiation will usually be denoted by indices, e.g. $u_{ij}$ for a function $u\colon M\rightarrow \R$, or, if ambiguities are possible, by a semicolon, e.g.
$h_{ij;k}.$ Usual partial derivatives will be denoted by a comma, e.g. $u_{i,j}.$\ \\
Let $x\colon M\hookrightarrow N$ be an embedding and $(h_{ij})$ be the second fundamental form, then we have the \textit{Gaussian formula}
\eq x^{\a}_{ij}=-h_{ij}\nu^{\a}, \eeq where $\nu$ is a differentiable normal, the \textit{Weingarten equation}
\eq \nu^{\a}_{i}=h^{k}_{i}x^{\a}_{k}, \eeq
the \textit{Codazzi equation} \eq h_{ij;k}-h_{ik;j}=\-{R}_{\a\b\g\d}\nu^{\a}x^{\b}_{i}x^{\g}_{j}x^{\d}_{k} \eeq
and the \textit{Gau\ss\ equation} \eq R_{ijkl}=(h_{ik}h_{jl}-h_{il}h_{jk})+\-{R}_{\a\b\g\d}x^{\a}_{i}x^{\b}_{j}x^{\g}_{k}x^{\d}_{l}. \eeq
Since in our case $N=\H^{n+1},$ we have 
\eq \-{R}_{\a\b\g\d}=\-{g}_{\a\d}\-{g}_{\b\g}-\-{g}_{\a\g}\-{g}_{\b\d} \eeq
and thus the Codazzi equation takes the form \eq h_{ij;k}=h_{ik;j}. \eeq
Now assume that $N=(a,b)\times S_{0},$ where $S_{0}$ is compact Riemannian and that there is a Gaussian coordinate system $(x^{\a})$ such that
\eq d\-{s}^{2}=e^{2\psi}((dx^{0})^{2}+\sigma_{ij}(x^{0},x)dx^{i}dx^{j}), \eeq
where $\sigma_{ij}$ is a Riemannian metric, $x=(x^{i})$ are local coordinates for $S_{0}$ and $\psi\colon N\rightarrow \R$ is a function.\ \\
Let $M=\graph u_{|S_{0}}$ be a hypersurface
\eq M=\{(x^{0},x)\colon x^{0}=u(x), x\in S_{0}\}, \eeq
then the induced metric has the form \eq g_{ij}=e^{2\psi}(u_{i}u_{j}+\sigma_{ij}) \eeq
with inverse \eq g^{ij}=e^{-2\psi}(\sigma^{ij}-v^{-2}u^{i}u^{j}), \eeq
where $(\sigma^{ij})=(\sigma_{ij})^{-1},$ $u^{i}=\sigma^{ij}u_{j}$ and \eq v^{2}=1+\sigma^{ij}u_{i}u_{j}\equiv 1+|Du|^{2}.\eeq
We use, especially in the Gaussian formula, the normal 
\eq \label{outernormal} (\nu^{\a})=v^{-1}e^{-\psi}(1,-u^{i}). \eeq
Looking at $\a=0$ in the Gaussian formula, we obtain 
\eq e^{-\psi}v^{-1}h_{ij}=-u_{ij}-\-{\Gamma}^{0}_{00}u_{i}u_{j}-\-{\Gamma}^{0}_{0i}u_{j}-\-{\Gamma}^{0}_{0j}u_{i}-\-{\Gamma}^{0}_{ij} \eeq
and \eq e^{-\psi}\-{h}_{ij}=-\-{\Gamma}^{0}_{ij}, \eeq
where covariant derivatives are taken with respect to $g_{ij}.$\ \\
Let us state some properties of $\H^{n+1}.$ $\H^{n+1}$ is parametrizable over $B_{2}(0)$ yielding the conformally flat metric
\eq d\-{s}^{2}=\frac{1}{(1-\frac 14 r^{2})^{2}}(dr^{2}+r^{2}\sigma_{ij}dx^{i}dx^{j}),\eeq 
where $(\sigma_{ij})$ is the canonical metric of $\S^{n},$ cf. \cite[p.16]{cg:ImcfHyp}. Also compare \cite[Thm. 10.2.1]{cg:cp}.\ \\
Defining $\tau$ by \eq \tau= \log(2+r)-\log(2-r),  \eeq
such that \eq d\tau=\frac{1}{1-\frac 14 r^{2}}dr, \eeq
then \eq d\-{s}^{2}=d\tau^{2}+\sinh^{2}\tau\sigma_{ij}dx^{i}dx^{j}. \eeq
Thus we have a parametrization of $\H^{n+1}$ over $\R^{n+1}$ and, using \cite[Thm. 1.7.5]{cg:cp}, we see that in geodesic polar coordinates around a given point the metric takes the above representation. In the sequel we will again write $r$ for $\tau,$ for greater clarity.\ \\
The geodesic spheres are totally umbilic and, setting \eq \-{g}_{ij}=\sinh^{2}r\sigma_{ij}, \eeq
their second fundamental form is given by \eq \-{h}_{ij}=\coth r \-{g}_{ij}. \eeq
Thus $\-{h}^{i}_{j}=\coth r\d^{i}_{j}$ and $\-{\k}_{i}=\coth r.$
The second fundamental form of a graph $M=\graph u$ satisfies
\eq \label{graphA} h_{ij}v^{-1}=-u_{ij}+\-{h}_{ij}. \eeq

\section{Long time existence}
\subsection*{$C^{0}$-estimates}\ \\

We first construct the spherical barriers of the flow.

\begin{propss}
Consider (\ref{floweq}) with $x(0)=S_{r_{0}}=\{x^{0}=r_{0}\}.$ Then the corresponding flow $x=x(t,\xi)$ exists for all time. The leaves $M(t)=x(t,M)$ are geodesic spheres with radius 
\eq x^{0}(t,M)=\Theta(t,r_{0}), \eeq
where $\Theta$ solves the ODE
\begin{align} \begin{split} \label{sphFlow} \dot{\Theta} = F^{-p}&=n^{-p}\coth^{-p}\Theta \\
										\Theta(0,r_{0})&=r_{0}. \end{split} \end{align}
\end{propss}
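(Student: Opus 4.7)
The plan is to reduce the flow equation to a scalar ODE for the radius, exploiting the rotational symmetry of the initial datum and the uniqueness of smooth solutions.

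First, I make the ansatz that the solution remains a concentric geodesic sphere, $M(t) = \{x^{0} = \Theta(t)\}$ in the same geodesic polar coordinates in which $S_{r_{0}} = \{x^{0} = r_{0}\}$, with $\Theta(0) = r_{0}$. Under this ansatz the outward unit normal is $\nu = \partial_{x^{0}}$, and a point moving with the surface has velocity $\dot{x} = \dot{\Theta}\,\nu$. From the computations of the preceding section, a geodesic sphere of radius $r$ in $\H^{n+1}$ has all principal curvatures equal to $\coth r$. By the $1$-homogeneity of $F$ together with the normalization $F(1,\dots,1) = n$, this yields
\[
F \;=\; n\coth\Theta \qquad \text{on } M(t),
\]
so $-\Phi(F) = F^{-p} = n^{-p}\coth^{-p}\Theta$. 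Matching $\dot{\Theta}\,\nu = F^{-p}\,\nu$ produces precisely \eqref{sphFlow}.

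For long-time existence of the ODE, I note that $f(\Theta) := n^{-p}\coth^{-p}\Theta$ is smooth and strictly positive on $(0,\infty)$ with $0 < f(\Theta) < n^{-p}$. The local solution with $\Theta(0) = r_{0} > 0$ is therefore strictly increasing, stays bounded away from $0$, and grows at most linearly, so it extends to all of $[0,\infty)$. The corresponding hypersurfaces $S_{\Theta(t)}$ then satisfy \eqref{floweq} for all $t\geq 0$ with the correct initial value.

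Uniqueness of smooth solutions for the quasilinear parabolic problem \eqref{floweq} with a given smooth initial hypersurface is a standard fact, so no non-spherical competitor exists and $S_{\Theta(t)}$ is the unique solution. There is essentially no serious obstacle here: the only potentially delicate analytic point is the behaviour of $\coth^{-p}\Theta$ near $\Theta = 0$, but this never arises since $\Theta(t) \geq r_{0} > 0$ throughout the evolution.
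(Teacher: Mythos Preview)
Your proof is correct and follows essentially the same route as the paper: both reduce the flow to the scalar ODE \eqref{sphFlow} via the spherical ansatz, using that the outward normal is $\partial_{x^0}$ and $F=n\coth\Theta$ on a geodesic sphere, and then conclude global existence from $0<\dot\Theta\le n^{-p}$. Your explicit appeal to parabolic uniqueness to rule out non-spherical competitors is a small addition not spelled out in the paper, but otherwise the arguments coincide.
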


\begin{proof}
Looking at (\ref{outernormal}), we see that the outer normal of a geodesic sphere is $(1,0,...,0)$ and thus, setting
\begin{align} \begin{split} x^{0}(t,\xi)&=\Theta(t,r_{0}) \\
					x^{i}(t,\xi)&=x^{i}(0,\xi), \end{split} \end{align}
where $\Theta$ is the unique solution of (\ref{sphFlow}), we see that $x$ solves the flow equation, also using that $F(\-{h}^{i}_{j})=n\coth\Theta.$			The solution of the ODE exists for all time, since $0<\dot{\Theta}\leq n^{-p}.$		
\end{proof}

We now derive further properties of the spherical flows.

\begin{propss} \label{sphFlowBound}
Let $\Theta_{i}=\Theta(t,r_{i}),$ $i=1,2,$ be solutions of (\ref{sphFlow}), $r_{1}<r_{2},$ then\ \\
\eq r_{i}+\frac{t}{n^{p}\coth^{p}r_{i}}\leq \Theta_{i}(t)\leq r_{i}+\frac{t}{n^{p}} \eeq and there exists $c=c(r_{1},n,p),$ such that
\eq \label{sphFlowBound1}  0< \Theta_{2}(t)-\Theta_{1}(t)\leq c(r_{2}-r_{1})\ \ \forall t\in [0,\infty) \eeq
and such that $p\mapsto c(r_{1},n,p)$ is continuous.
\end{propss}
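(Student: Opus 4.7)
The plan is to first establish the elementary two-sided bound on $\Theta_i$ using monotonicity of $\Theta$ and of $\coth$. Because $\dot\Theta = n^{-p}\coth^{-p}\Theta>0$, each solution $\Theta_i$ is strictly increasing, hence $\Theta_i(t)\geq r_i$ for all $t\geq 0$. Combined with $\coth\Theta_i\geq 1$ this gives $\dot\Theta_i\leq n^{-p}$, and combined with $\coth\Theta_i\leq \coth r_i$ (since $\coth$ is decreasing on $(0,\infty)$) it gives $\dot\Theta_i\geq n^{-p}\coth^{-p}r_i$. Integrating over $[0,t]$ produces the claimed double inequality for $\Theta_i(t)$.

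For the Lipschitz-in-$r$ estimate, I would set $w(t):=\Theta_2(t)-\Theta_1(t)$ and differentiate. The function $f(x)=\coth^{-p}x$ has derivative
\[
f'(x) = p\coth^{-p-1}(x)\,\mathrm{csch}^2(x),
\]
and since $\coth x\geq 1$ together with $-p-1<0$ yields $\coth^{-p-1}(x)\leq 1$, one has $f'(x)\leq p\,\mathrm{csch}^2(x)$. Applying the mean value theorem to $\dot w = n^{-p}(f(\Theta_2)-f(\Theta_1))$ and using that $\mathrm{csch}^2$ is decreasing then yields
\[
0 \leq \dot w \leq n^{-p}p\,\mathrm{csch}^2(\Theta_1)\,w,
\]
so that $w>0$ is preserved (giving the strict lower bound) and Gronwall's inequality provides
\[
w(t)\leq (r_2-r_1)\exp\Bigl(n^{-p}p\int_0^t \mathrm{csch}^2(\Theta_1(s))\,ds\Bigr).
\]

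To close the argument I would plug in the already proven lower bound $\Theta_1(s)\geq r_1+s/(n^p\coth^p r_1)$, use once more that $\mathrm{csch}^2$ is decreasing, and compute the resulting convergent integral via the antiderivative $-\coth$:
\[
\int_0^\infty \mathrm{csch}^2\!\bigl(r_1+s/(n^p\coth^p r_1)\bigr)\,ds = n^p\coth^p r_1\,(\coth r_1-1).
\]
Substituting back produces the explicit constant $c(r_1,n,p)=\exp\bigl(p(\coth r_1-1)\coth^p r_1\bigr)$, which is manifestly continuous in $p$. The main obstacle is recognising that the Gronwall factor $\mathrm{csch}^2(\Theta_1)$ must be integrated in time rather than merely bounded pointwise; once one exploits the exponential decay of $\mathrm{csch}^2$ at infinity in combination with the already established linear-in-$t$ lower bound for $\Theta_1$, the constant reduces to the closed form above and the continuity in $p$ is automatic.
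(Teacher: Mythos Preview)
Your proof is correct and follows essentially the same route as the paper: both arguments obtain the two-sided bound on $\Theta_i$ from the monotonicity of $\coth$, then control $\rho=\Theta_2-\Theta_1$ via the mean value theorem applied to the right-hand side of the ODE, reduce the resulting differential inequality to one governed by $\sinh^{-2}\Theta_1$, insert the linear lower bound for $\Theta_1$, and integrate using the antiderivative $-\coth$. The only cosmetic difference is that the paper first estimates $\coth^{-p}\Theta_2-\coth^{-p}\Theta_1\leq \coth^{p}\Theta_1-\coth^{p}\Theta_2$ before applying the mean value theorem to $\coth^p$, whereas you apply it directly to $\coth^{-p}$; your route is in fact slightly cleaner and yields the explicit constant $c=\exp\bigl(p(\coth r_1-1)\coth^p r_1\bigr)$, from which the continuity in $p$ is immediate.
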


\begin{proof}
The first inequality follows from \eq \frac{1}{n^{p}\coth^{p}r_{i}}\leq \dot{\Theta}\leq \frac{1}{n^{p}}, \eeq
since $\dot{\Theta}>0$ and since $\coth$ is decreasing.
To prove the second claim, define 
\eq \rho(t)=\Theta_{2}(t)-\Theta_{1}(t). \eeq
$\rho$ is positive, since this is the case at $t=0$ and different orbits of an ODE flow can not intersect.
We have \begin{align} \begin{split} \dot{\rho}(t)&=\frac{1}{n^{p}\coth^{p}\Theta_{2}}-\frac{1}{n^{p}\coth^{p}\Theta_{1}} \\
									&\leq \frac{1}{n^{p}}(\coth^{p}\Theta_{1}-\coth^{p}\Theta_{2}) \\
									&=\frac{1}{n^{p}}(p \coth^{p-1}(s)(\coth^{2}(s)-1))(\Theta_{2}-\Theta_{1}),\ s\in[\Theta_{1}(t),\Theta_{2}(t)]\\
									&\leq \~{c}(n,p,r_{1})(\coth^{2}\Theta_{1}-1)\rho(t)\\
									&=\~{c}\sinh^{-2}\Theta_{1}\rho(t)\\
									&\leq \~{c}\sinh^{-2}(r_{1}+ct)\rho(t), \end{split}\end{align}	
%yielding	\eq\frac{d}{dt}(\log \rho)\leq \~{c}\sinh^{-2}(ct+r_{1}).\eeq
Thus \begin{align}\begin{split} \log \rho(t)&\leq \log \rho(0)+\int_{0}^{t}\~{c}\sinh^{-2}(cs+r_{1})ds \\
										&=\log(r_{2}-r_{1})+\frac{\~{c}}{c}[-\coth(cs+r_{1})]^{t}_{0}\\
										&\leq \log(r_{2}-r_{1})+\frac{\~{c}}{c}\coth r_{1} \end{split}\end{align} and
									$$ \rho(t)\leq c(n,p,r_{1})(r_{2}-r_{1}).$$
\end{proof}

\begin{corss} \label{sphFlowGrowth}
Let $\Theta=\Theta(t,r_{0})$ be a solution of (\ref{sphFlow}), then there exists $c=c(r_{0},n,p),$ such that
\eq -c<\Theta -\frac{t}{n^{p}}<c\ \ \forall t\in[0,\infty). \eeq
\end{corss}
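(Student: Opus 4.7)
The plan is to introduce the deviation $\rho(t):=\Theta(t)-t/n^{p}$ and show that $\rho$ is monotonically decreasing and bounded below, which immediately gives both estimates.

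First I would differentiate:
\eq \dot{\rho}=\dot{\Theta}-\frac{1}{n^{p}}=\frac{1}{n^{p}}\bigl(\coth^{-p}\Theta-1\bigr). \eeq
Since $\Theta(t)>0$ implies $\coth\Theta>1$, we obtain $\dot{\rho}<0$, so $\rho$ is strictly decreasing. In particular
\eq \rho(t)\leq \rho(0)=r_{0}, \eeq
which gives the upper bound in the corollary (with $c\geq r_{0}$).

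For the lower bound, the key is to show $|\dot{\rho}|$ is integrable on $[0,\infty)$, which needs exponential decay. From the preceding proposition we already know that $\Theta$ grows at least linearly: setting $a:=n^{-p}\coth^{-p}r_{0}>0$, we have $\Theta(t)\geq r_{0}+at$. Since $\coth\Theta-1=2/(e^{2\Theta}-1)$ decays like $e^{-2\Theta}$, and by the mean value theorem applied to $s\mapsto s^{p}$ on $[1,\coth r_{0}]$ we can bound $\coth^{p}\Theta-1\leq C_{1}(n,p,r_{0})\,(\coth\Theta-1)$, we get
\eq |\dot{\rho}(t)|=\frac{1}{n^{p}}\cdot\frac{\coth^{p}\Theta-1}{\coth^{p}\Theta}\leq C_{2}(n,p,r_{0})\,e^{-2\Theta(t)}\leq C_{2}\,e^{-2(r_{0}+at)}. \eeq

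Integrating this over $[0,t]$ yields
\eq \rho(t)\geq \rho(0)-\int_{0}^{\infty}|\dot{\rho}(s)|\,ds\geq r_{0}-\frac{C_{2}}{2a}e^{-2r_{0}}, \eeq
which is a finite constant $-c(n,p,r_{0})$ independent of $t$. Choosing $c$ to be the maximum of $r_{0}$ and the absolute value of this lower bound finishes the proof.

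The only mildly delicate step is quantifying that $\coth^{p}\Theta-1$ inherits the exponential decay of $\coth\Theta-1$ uniformly in $p$ (so that one obtains a constant that depends continuously on $p$, consistent with the continuity statement in the preceding proposition); this is handled cleanly by the mean value theorem as above, using the explicit a priori bound $\Theta(t)\geq r_{0}$ to control the relevant Lipschitz factor.
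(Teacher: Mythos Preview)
Your proof is correct and follows essentially the same approach as the paper: show that $\rho=\Theta-t/n^{p}$ is decreasing (giving the upper bound immediately) and then show $|\dot\rho|$ is integrable by combining the linear lower bound $\Theta\geq r_{0}+at$ with the exponential decay of $\coth\Theta-1$. The only cosmetic difference is that the paper handles $\coth^{p}\Theta-1$ by passing to an integer $m\geq p$ and factoring $1-\coth^{m}\Theta=(1-\coth\Theta)\sum_{k=0}^{m-1}\coth^{k}\Theta$, whereas you use the mean value theorem on $s\mapsto s^{p}$; both yield a bound of the form $c\,\sinh^{-2}\Theta$ (equivalently $c\,e^{-2\Theta}$), and the integrals are the same.
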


\begin{proof}
The upper estimate follows from Proposition \ref{sphFlowBound} immediately.
There holds
\begin{align}\begin{split} \dot{\Theta}-\frac{1}{n^{p}}&=\frac{1}{n^{p}\coth^{p}\Theta}-\frac{1}{n^{p}}\\
										&=\frac{1}{n^{p}}\frac{1-\coth^{p}\Theta}{\coth^{p}\Theta}\\
										&\geq\frac{1}{n^{p}}(1-\coth^{p}\Theta)\\
										&\geq \frac{1}{n^{p}}(1-\coth^{m}\Theta), \ p\leq m\in\mathbb{Z}\\
										&=\frac{1}{n^{p}}\sum_{k=0}^{m-1}\coth^{k}\Theta(1-\coth\Theta)\\
										&\geq c(n,p,r_{0})(1-\coth^{2}\Theta)\\
										&\geq c(1-\coth^{2}(r_{0}+\~{c}t)) \end{split}\end{align}
	and thus
\begin{align}\begin{split}\Theta(t)-\frac{t}{n^{p}}&\geq r_{0}+c\int_{0}^{t}(1-\coth^{2}(r_{0}+\~{c}s))ds\\
												&=r_{0}+\frac{c}{\~{c}}[\coth(r_{0}+\~{c}s)]^{t}_{0}\\
												&=r_{0}+\frac{c}{\~{c}}\coth(r_{0}+\~{c}t)-\frac{c}{\~{c}}\coth r_{0}\\
												&\geq r_{0}-\frac{c}{\~{c}}\coth r_{0} \end{split}\end{align} 
\end{proof}

\begin{remss} \label{shorttime}
Looking at \cite[Thm. 2.5.19]{cg:cp} and \cite[Thm. 2.6.1]{cg:cp}, under the assumptions of Theorem \ref{mainthm} we obtain short time existence of the flow on a maximal interval $[0,T^{*}),$ $0<T^{*}\leq\infty,$ and 
\eq x\in C^{\infty}([0,T^{*})\times M,\H^{n+1}). \eeq
This includes, that all the leaves $M(t)=x(t,M),$ $0\leq t<T^{*},$ are admissable in the sense of Definition \ref{admissable} and can be written as graphs over $\S^{n}.$ Furthermore the flow $x$ exists as long as the scalar flow
\eq \label{scalarFlow} \dot{u}=\frac{\del u}{\del t}=-\Phi v \eeq
does, where \eq u\colon [0,T^{*})\times \S^{n}\rightarrow \R, \eeq also compare \cite[Thm. 2.5.17]{cg:cp} and \cite[p. 98-99]{cg:cp}. Thus, for the rest of the next chapters we will most of the time investigate long time existence for (\ref{scalarFlow}).
\end{remss}

\vspace{0,05 cm}

\begin{lemss} \label{oscBound}
The solution $u$ of (\ref{scalarFlow}) satisfies
\eq \label{oscBound1}\Theta(t,\inf u(0,\cdot))\leq u(t,x)\leq \Theta(t,\sup u(0,\cdot))\ \ \forall t\in[0,T^{*}) \ \forall x\in \S^{n}. \eeq

In particular we have 
\eq \osc u(t,\cdot)=\sup u(t,\cdot)-\inf u(t,\cdot)\leq c\osc u(0,\cdot), \eeq
$c=c(n,p,\inf u(0,\cdot)).$
\end{lemss}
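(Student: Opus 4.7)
The pointwise bound \eqref{oscBound1} is established by the parabolic comparison principle applied to the scalar equation \eqref{scalarFlow}, using the spherical flows of the previous proposition as barriers. Observe first that the spatially constant functions $\bar u(t):=\Theta(t,\sup u(0,\cdot))$ and $\underline u(t):=\Theta(t,\inf u(0,\cdot))$ are themselves solutions of \eqref{scalarFlow}: for a constant graph one has $Du=0$, hence $v=1$ and $h^{i}_{j}=\coth(u)\,\delta^{i}_{j}$, so $F=n\coth u$ and \eqref{scalarFlow} reduces to the ODE \eqref{sphFlow}. Moreover, by the definition of $\sup$ and $\inf$ we have $\underline u(0)\le u(0,\cdot)\le\bar u(0)$.

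The maximum principle argument then runs as follows. Suppose, aiming for a contradiction, that there is a first touching of $u$ with $\bar u$ from below at some $(t_0,x_0)\in(0,T^{*})\times\S^{n}$. At that point $u(t_0,x_0)=\bar u(t_0)$, $Du(t_0,x_0)=0$, $D^{2}u(t_0,x_0)\le 0$, and $\dot u(t_0,x_0)\ge\dot{\bar u}(t_0)$. Since $Du=0$ gives $v=1$, the graph identity \eqref{graphA} yields
\begin{equation*}
h_{ij}=-u_{ij}+\bar h_{ij}\ge\bar h_{ij}=\coth(\bar u)\,\bar g_{ij}.
\end{equation*}
Monotonicity and $1$-homogeneity of $F$, together with $F(1,\dots,1)=n$, then imply $F\ge n\coth\bar u$, hence $\dot u=F^{-p}\le n^{-p}\coth^{-p}\bar u=\dot{\bar u}$, contradicting $\dot u\ge\dot{\bar u}$; the non-strict case is handled, as usual, by perturbing to $\bar u+\varepsilon e^{Kt}$ for a suitable $K$ and letting $\varepsilon\downarrow 0$. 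The lower barrier $\underline u$ is treated symmetrically, yielding \eqref{oscBound1}. (Equivalently, one may invoke the avoidance principle for graphical solutions of parabolic curvature flows in the form already used from \cite{cg:cp}.)

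The oscillation estimate is then an immediate consequence of Proposition \ref{sphFlowBound}: subtracting the two pointwise inequalities and applying \eqref{sphFlowBound1} gives
\begin{equation*}
\osc u(t,\cdot)\le\Theta(t,\sup u(0,\cdot))-\Theta(t,\inf u(0,\cdot))\le c\bigl(\sup u(0,\cdot)-\inf u(0,\cdot)\bigr)=c\,\osc u(0,\cdot),
\end{equation*}
with $c=c(n,p,\inf u(0,\cdot))$. The only delicate point in the whole argument is making the touching step rigorous, but this is the standard parabolic maximum principle and the $\varepsilon$-perturbation renders it routine.
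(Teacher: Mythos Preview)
Your proof is correct and follows essentially the same strategy as the paper: comparison with the spherical barriers via the maximum principle, followed by the oscillation estimate from Proposition~\ref{sphFlowBound}. The only difference is implementation: the paper tracks $w(t)=\sup_{\S^n}u(t,\cdot)$ as a Lipschitz function of $t$, derives the differential inequality $\dot w\le n^{-p}\coth^{-p}w$ at points of differentiability (using \cite[Lemma 6.3.2]{cg:cp}), and concludes by Gronwall, whereas you argue via a first-touching point with an $\varepsilon$-perturbed barrier; both are standard and equivalent here.
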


\begin{proof}
Let \eq w(t)=\sup u(t,\cdot)=u(t,x_{t}). \eeq
By \cite[Lemma 6.3.2]{cg:cp}, $w$ is Lipschitz continuous and at a point of differentiability we have
\begin{align} \begin{split} \dot{w}(t)=\frac{\del u}{\del t}(t,x_{t})&=\frac{1}{F^{p}(-g^{ik}u_{kj}+\-{h}^{i}_{j})}\\
												&\leq \frac{1}{F^{p}(\-{h}^{i}_{j})}\\
												&=\frac{1}{n^{p}\coth^{p}w}\equiv \mathcal{L}(w) \end{split} \end{align}
On the other hand
\eq \dot{\Theta}(\cdot,\sup u(0,\cdot))=\mathcal{L}(\Theta(\cdot,\sup u(0,\cdot))) \eeq
as well as \eq w(0)=\Theta(0,\sup u(0,\cdot)), \eeq
from which the upper estimate follows by integration and Gronwall's lemma applied to $w-\Theta.$
The estimate from below follows identically.
\end{proof}

\begin{corss} \label{thetaGrowth}
Define \eq \t(r)=\sinh r \eeq and let $u$ be the solution of (\ref{scalarFlow}). Then there exists $c=c(n,p, M_{0}),$ such that
\eq \label{thetaGrowth1} 0<c^{-1}\leq \t(u)e^{-\frac{t}{n^{p}}}\leq c\ \ \forall t\in [0,T^{*}). \eeq
and \eq \frac{\-{H}(u)}{n}-1=\coth u-1\leq ce^{-\frac{2}{n^{p}}t}. \eeq
\end{corss}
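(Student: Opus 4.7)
The key input is the combination of Lemma~\ref{oscBound} with Corollary~\ref{sphFlowGrowth}, which I would use to establish the uniform sandwich estimate
\begin{equation*}
\Bigl|u(t,x)-\tfrac{t}{n^{p}}\Bigr|\leq c=c(n,p,M_{0}),\qquad (t,x)\in[0,T^{*})\times \S^{n}.
\end{equation*}
Indeed, Lemma~\ref{oscBound} bounds $u(t,x)$ above by $\Theta(t,\sup u(0,\cdot))$ and below by $\Theta(t,\inf u(0,\cdot))$, and each of these barriers differs from $t/n^{p}$ by a quantity uniformly bounded in $t$ by Corollary~\ref{sphFlowGrowth}. In addition, since $\Theta(\cdot,r_{0})$ is monotonically increasing in $t$, one has the pointwise lower bound $u(t,x)\geq\inf u(0,\cdot)>0$ for all $t\geq 0$.

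For the first assertion \eqref{thetaGrowth1}, I would factor
\begin{equation*}
\vartheta(u)\,e^{-\frac{t}{n^{p}}}=\tfrac{1}{2}\bigl(e^{u-\frac{t}{n^{p}}}-e^{-u-\frac{t}{n^{p}}}\bigr)=\tfrac{1}{2}\,e^{u-\frac{t}{n^{p}}}\bigl(1-e^{-2u}\bigr).
\end{equation*}
The sandwich estimate controls $e^{u-t/n^{p}}$ both above and below by positive constants, while $1-e^{-2u}$ lies in $[1-e^{-2\inf u(0,\cdot)},1)$. Multiplying gives the two-sided bound.

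The second assertion is then a direct consequence. I would rewrite
\begin{equation*}
\coth u -1 =\frac{e^{-u}}{\sinh u}.
\end{equation*}
From the sandwich estimate, $e^{-u}\leq e^{c}\,e^{-t/n^{p}}$, and the first assertion just proved gives $\sinh u\geq c^{-1}e^{t/n^{p}}$. Multiplying these two bounds yields $\coth u -1\leq c\,e^{-\frac{2}{n^{p}}t}$, which is the claimed estimate on the mean curvature of the geodesic spheres through the level sets.

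\textbf{Main obstacle.} There is no real obstacle here: once the uniform two-sided control of $u-t/n^{p}$ is in place via the barriers and Corollary~\ref{sphFlowGrowth}, the rest is a routine manipulation of $\sinh$ and $\coth$. The only mild subtlety is to keep track of how the constant depends on $\inf u(0,\cdot)$ and $\sup u(0,\cdot)$ (and thus on $M_{0}$), so as to absorb all constants into a single $c=c(n,p,M_{0})$.
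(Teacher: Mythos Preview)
Your proposal is correct and follows essentially the same approach as the paper: you combine the barrier estimate of Lemma~\ref{oscBound} with Corollary~\ref{sphFlowGrowth} to control $u-t/n^{p}$ uniformly, then reduce both assertions to elementary manipulations of $\sinh$ and $\coth$. The paper carries out the same computations, differing only in that it writes out the exponential splittings directly rather than first isolating the sandwich estimate as a separate step.
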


\begin{proof}
We deduce
\begin{align} \begin{split} \t(u)e^{-\frac{t}{n^{p}}}&=\frac 12 \left(e^{u-\frac{t}{n^{p}}}-e^{-(u+\frac{t}{n^{p}})}\right)\\
									&\leq \frac 12 e^{\Theta(t,\sup u(0,\cdot))-\frac{t}{n^{p}}}\\
									&\leq c(\sup u(0,\cdot),n,p), \end{split} \end{align}
		by Corollary \ref{sphFlowGrowth}, as well as
\begin{align} \begin{split} \t(u)e^{-\frac{t}{n^{p}}}&\geq \frac 12 \left(e^{\Theta(t,\inf u(0,\cdot))-\frac{t}{n^{p}}}-e^{-\Theta(t,\inf u(0,\cdot))-\frac{t}{n^{p}}}\right)\\
									&=\frac 12 \left(e^{\Theta(t,\inf u(0,\cdot))-\frac{t}{n^{p}}}-e^{\Theta(t,\inf u(0,\cdot))-\frac{t}{n^{p}}-2\Theta (t,\inf u(0,\cdot))}\right) \\
									&\geq \frac 12 e^{-c}\left(1-e^{-2\Theta(t,\inf u(0,\cdot))}\right)\\
									&\geq c>0. \end{split}\end{align}
	Furthermore
	\begin{align}\begin{split} \coth u-1&= \frac{\cosh u-\sinh u}{\t(u)}=\frac{e^{-u}}{\t(u)} \\
								&= \frac{e^{-(u-\frac{t}{n^{p}})}e^{-\frac{2}{n^{p}}t}}{\t(u)e^{-\frac{t}{n^{p}}}}\\
								&\leq \frac{e^{-\Theta(t,\inf u(0,\cdot))+\frac{t}{n^{p}}}e^{-\frac{2}{n^{p}}t}}{\t(u)e^{-\frac{t}{n^{p}}}}\\
								&\leq c e^{-\frac{2}{n^{p}}t}. \end{split}\end{align}		
\end{proof}

\subsection*{$C^{1}$-estimates}
\begin{lemss} \label{vconvex}
Let $u$ be the short time solution of (\ref{scalarFlow}) in case $p>1.$ Then for the quantity
\eq v=\sqrt{1+\-{g}^{ij}u_{i}u_{j}}\equiv \sqrt{1+|Du|^{2}} \eeq
there exists $c=c(n,p,M_{0}),$ such that
\eq v\leq c\ \ \forall t\in [0,T^{*}). \eeq
Furthermore $c$ depends on $p$ continuously.
\end{lemss}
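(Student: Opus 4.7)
My approach is the parabolic maximum principle applied to $v$, using the evolution equation induced by the scalar flow (\ref{scalarFlow}), $\dot{u}=v/F^{p}$. Differentiating $v^{2}=1+\bar{g}^{ij}u_{i}u_{j}$ in $t$ and in space, and using the Gaussian formula (\ref{graphA}) together with $\bar{h}^{i}_{j}=\coth u\,\d^{i}_{j}$ and the flat Codazzi equation in $\H^{n+1}$, I would derive an evolution equation of the form
\begin{equation*}
\dot{v}-\dot{\Phi}F^{ij}v_{;ij}=-\dot{\Phi}F^{ij}h_{ki}h^{k}_{j}\,v+R,
\end{equation*}
where $\dot{\Phi}=pF^{-p-1}$ and $R$ collects terms produced by the warping factor $\sinh u$ and the constant ambient sectional curvature $-1$ of $\H^{n+1}$. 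This computation follows the general scheme of \cite[Ch.~2]{cg:cp}.

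Next I would apply the maximum principle at a spatial maximum $x_{t}$ of $v$ on $\S^{n}$. At $x_{t}$ the spatial gradient $v_{;i}$ vanishes and $v_{;ij}$ is non-positive, so the Hessian term has a favorable sign; the term $-\dot{\Phi}F^{ij}h_{ki}h^{k}_{j}\,v$ is also non-positive by ellipticity $F^{ij}>0$. The remainder $R$ splits naturally into a contribution from the ambient negative curvature (which turns out to point in the good direction after the usual sign book-keeping) and a warping contribution scaling like $F^{-p}\coth u\cdot v$. Using $\coth u-1\leq c\,e^{-2t/n^{p}}$ from Corollary \ref{thetaGrowth}, this piece is integrable in $t$, producing an ODE inequality
\begin{equation*}
\dot{v}_{\max}(t)\leq a(t)v_{\max}(t)+b(t),\qquad \int_{0}^{\infty}(a+b)\,dt<\infty,
\end{equation*}
to which Gronwall's lemma delivers the uniform bound. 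Continuity of the constant in $p$ is manifest because every quantity in the argument (powers of $F$, exponentials in $t/n^{p}$, constants inherited from Proposition \ref{sphFlowBound}) depends continuously on $p$.

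The principal obstacle is the super-linear factor $F^{-p-1}$ appearing in $\dot{\Phi}$: for $p>1$ this is strictly more singular than the $F^{-2}$ handled by the IMCF argument of \cite{cg:ImcfHyp}, so one cannot conclude directly from concavity of $F$ alone. The way out is to exploit that the barriers of Proposition \ref{sphFlowBound} force $u$ to grow essentially like $t/n^{p}$, and that concavity of $F$ combined with the oscillation control of Lemma \ref{oscBound} yields a time-independent positive lower bound for $F$ evaluated on $M_{t}$, which tames $\dot{\Phi}=pF^{-p-1}$. Threading this lower bound through the remainder $R$ — in particular handling the cross terms $\dot\Phi F^{ij}h_{k}{}^{i}v_{;j}$ that would have been genuinely bad away from the maximum — is the delicate step on which the whole gradient estimate hinges.
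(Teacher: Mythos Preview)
Your approach has a genuine gap. Working out the evolution equation for $v$ (as the paper itself does later, in the proof of Theorem~\ref{GradDecay}), the zeroth-order remainder at a spatial maximum of $v$ contains the combination
\[
\frac{\bar H}{n}(v^{2}-1)\Phi+\dot\Phi F\,\frac{\bar H}{n}(v-1)^{2}
=\frac{\bar H}{n}\,F^{-p}(v-1)\bigl((p-1)v-(p+1)\bigr).
\]
For $p\le 1$ this factor is $\le -2(v-1)<0$ and the argument closes (cf.\ Lemma~\ref{GradBound}). For $p>1$, however, it becomes \emph{positive} once $v>\frac{p+1}{p-1}$, so the parabolic maximum principle on $v$ alone does not prevent $v$ from growing. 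The good second-order term $-\dot\Phi F^{ij}h_{ik}h^{k}_{j}v$ cannot absorb this, since it scales like $v$ while the bad term scales like $v^{2}$. This is precisely why, later in the paper, the decay estimate for $v$ in the case $p>1$ requires the additional pinching hypothesis $\osc u(0,\cdot)<\e$ to ensure $v<\frac{p+1}{p-1}$ a priori.

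There is also a circularity in your plan: the ``time-independent positive lower bound for $F$'' you invoke is Proposition~\ref{curvBound}, whose proof uses $v\le c$ (via the boundedness of $\tilde\chi=\chi e^{t/n^{p}}$). Concavity of $F$ and oscillation control of $u$ do not by themselves yield a lower bound on $F$ without curvature information.

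The paper's argument is entirely different and much shorter: for $p>1$ the standing hypothesis of Theorem~\ref{mainthm} is $\Gamma=\Gamma_{+}$, so every leaf $M(t)$ is convex. For a convex graph over the geodesic spheres one has the purely geometric estimate $v\le e^{\bar\kappa\,\osc u}$ (\cite[Thm.~2.7.10, (2.7.83)]{cg:cp}), where $\bar\kappa$ bounds the principal curvatures of the slices $\{x^{0}=\const\}$ and hence depends only on $\inf u(0,\cdot)$. Combined with the oscillation bound of Lemma~\ref{oscBound}, this gives $v\le c(n,p,M_{0})$ immediately, with continuous dependence on $p$ inherited from Proposition~\ref{sphFlowBound}. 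No parabolic argument or lower bound on $F$ is needed.
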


\begin{proof}
In case $p>1$ the leaves $M(t)$ are convex. Thus, \cite[Thm. 2.7.10]{cg:cp}, especially estimate (2.7.83)
\eq \label{convexitygradbound} v\leq e^{\bar{\kappa}(\sup u-\inf u)}\eeq
 is applicable. Note that in this estimate, an upper bound for the principal curvatures of $\{x^{0}=\const\}$ is uniformly given by some $\-{\k}=\-{\k}(\inf u(0,\cdot)).$ Thus we obtain the claim in view of Lemma \ref{oscBound}.
\end{proof}

In case $p\leq 1$ we do not assume convexity. We use the maximum principle to estimate $v$.\\

 We follow the method in \cite{cg:ImcfHyp}.

\begin{remss} \label{phi}
Defining \eq \p=\int_{r_{0}}^{u}\t^{-1} \eeq
and having (\ref{graphA}) in mind, we obtain
\eq \label{phiA} h^{i}_{j}=g^{ik}h_{kj}=v^{-1}\t^{-1}(-(\sigma^{ik}-v^{-2}\p^{i}\p^{k})\p_{jk}+\dot{\t}\d^{i}_{j}), \eeq
where covariant differentiation and index raising happens with respect to $\sigma_{ij},$ cf. \cite[(3.26)]{cg:ImcfHyp}. We obtain
\eq \label{phiScalarFlow} \dot{\p}=\t^{-1}\dot{u}=\frac{\t^{p-1}v}{F^{p}(\t h^{i}_{j})}\equiv \frac{\t^{p-1}v}{F^{p}(\~{h}^{i}_{j})}. \eeq
There holds
\eq g_{ij}=u_{i}u_{j}+\t^{2}\sigma_{ij}=\t^{2}(\p_{i}\p_{j}+\sigma_{ij})\equiv \t^{2}\~{g}_{ij}. \eeq
Defining \eq \~{h}_{ij}=\~{g}_{ik}\~{h}^{k}_{j}, \eeq
we see that in (\ref{phiScalarFlow}) we are considering the eigenvalues of $\~{h}_{ij}$ with respect to $\~{g}_{ij}$ and thus we define 
\eq F^{ij}=\frac{\del F}{\del \~{h}_{ij}}\ \mathrm{and}\ F^{i}_{j}=\frac{\del F}{\del \~{h}_{i}^{j}}. \eeq
We have \eq \~{h}_{ij}=\~{g}_{ik}\~{h}^{k}_{j}=\t^{-2}g_{ik}\t h^{k}_{j}=\t^{-1}h_{ij}, \eeq
hence $\~{h}_{ij}$ is symmetric. Furthermore note
\eq |Du|^{2}=\sigma^{ij}\p_{i}\p_{j}\equiv|D\p|^{2}, \eeq
as well as
\eq \label{phiA2} \~{h}^{l}_{k}=-v^{-1}\~{g}^{lj}\p_{jk}+v^{-1}\dot{\t}\d^{l}_{k}. \eeq
\end{remss}

\begin{lemss} \label{phiDer}
The various quantities and tensors in (\ref{phiScalarFlow}) satisfy
\eq (\t^{p-1})_{i}=(p-1)\t^{p-1}\dot{\t}\p_{i},\eeq
\eq v_{i}=v^{-1}\p_{ki}\p^{k}, \eeq
\eq \~{g}^{lr}_{\hp{lr};i}=2v^{-3}v_{i}\p^{l}\p^{r}-v^{-2}(\p^{l}_{i}\p^{r}+\p^{l}\p^{r}_{i}) \eeq
and \eq \~{h}^{l}_{k;i}=v^{-2}v_{i}(\~{g}^{lr}\p_{rk}-\dot{\t}\d^{l}_{k})-v^{-1}(\~{g}^{lr}_{\hp{lr};i}\p_{rk}+\~{g}^{lr}\p_{rki}-\t^{2}\p_{i}\d^{l}_{k}), \eeq
where $(\~{g}^{rl})=(\~{g}_{rl})^{-1}$ and the covariant derivatives as well as index raising are performed with respect to $\sigma_{ij}.$
\end{lemss}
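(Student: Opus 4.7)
The four identities in \rl{phiDer} are all straightforward consequences of the chain rule and the covariant differentiation (with respect to $\sigma_{ij}$) of the defining relations. The plan is to set up the two algebraic identities
\eq u_i = \vartheta\,\varphi_i,\qquad \dot\vartheta = \cosh u,\qquad \ddot\vartheta = \sinh u = \vartheta, \eeq
(the first of which follows from $\varphi=\int_{r_0}^{u}\vartheta^{-1}$, and the others from $\vartheta(r)=\sinh r$) and then to differentiate each expression term by term.

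First, for $(\vartheta^{p-1})_i$, I would write $(\vartheta^{p-1})_i=(p-1)\vartheta^{p-2}\dot\vartheta\,u_i$ and substitute $u_i=\vartheta\varphi_i$. For $v_i$ with $v^2=1+\sigma^{kl}\varphi_k\varphi_l$, differentiating gives $2vv_i=2\sigma^{kl}\varphi_{ki}\varphi_l$, hence $v_i=v^{-1}\varphi^k\varphi_{ki}$. The identity for $\tilde g^{lr}_{;i}$ follows directly from the formula $g^{ij}=e^{-2\psi}(\sigma^{ij}-v^{-2}u^iu^j)$ rewritten for $\tilde g^{lr}$, namely $\tilde g^{lr}=\sigma^{lr}-v^{-2}\varphi^l\varphi^r$, whose $\sigma$-covariant derivative gives the stated expression once one uses $(\sigma^{lr})_{;i}=0$ and $(v^{-2})_{;i}=-2v^{-3}v_i$.

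The last identity is the main computation but still routine. Starting from \re{phiA2}, namely
\eq \tilde h^l_k = -v^{-1}\tilde g^{lr}\varphi_{rk}+v^{-1}\dot\vartheta\,\delta^l_k, \eeq
I would differentiate using $(v^{-1})_{;i}=-v^{-2}v_i$ and $(\dot\vartheta)_{;i}=\ddot\vartheta\,u_i=\vartheta^2\varphi_i$, collecting the $v^{-2}v_i$ pieces and the remaining terms to obtain
\eq \tilde h^l_{k;i}=v^{-2}v_i(\tilde g^{lr}\varphi_{rk}-\dot\vartheta\,\delta^l_k)-v^{-1}\bigl(\tilde g^{lr}_{;i}\varphi_{rk}+\tilde g^{lr}\varphi_{rki}-\vartheta^2\varphi_i\,\delta^l_k\bigr). \eeq

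There is no real obstacle here: the only point to be careful with is the bookkeeping of the chain rule through $u$ (so that $\dot\vartheta$ and $\ddot\vartheta$ pick up the correct factor $u_i=\vartheta\varphi_i$), and the fact that all covariant derivatives and raising/lowering of indices are performed with respect to $\sigma_{ij}$, so that $\sigma^{ij}$ and its associated Christoffel symbols commute with $\nabla$. Once these conventions are respected, each identity reduces to one line of differentiation.
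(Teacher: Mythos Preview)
Your proposal is correct and follows exactly the same approach as the paper, which simply notes that each identity is a straightforward computation once one keeps in mind that $\vartheta=\vartheta(u)$ and hence $\vartheta_i=\dot\vartheta\,u_i=\dot\vartheta\,\vartheta\,\varphi_i$. Your write-up in fact supplies more detail than the paper's own proof, but the method is identical.
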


\begin{proof}
This is a straightforward computation in any of the cases. Just have in mind that $\t=\t(u),$ such that $\t_{i}=\dot{\t}u_{i}=\dot{\t}\t\p_{i}.$
\end{proof}

\begin{lemss} \label{GradBound}
Let $u$ be the solution of (\ref{scalarFlow}) in case $p\leq 1.$ Then %$\forall t\in[0,T^{*})\ \forall x\in\S^{n}$ there holds
\eq \label{GradBound2}  v\leq\sup v(0,\cdot). \eeq
\end{lemss}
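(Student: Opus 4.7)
The plan is to apply the parabolic maximum principle to the function $w:=\tfrac12|D\p|^{2}$ on $[0,T^{*})\ti\S^{n}$, using the scalar flow \eqref{phiScalarFlow} for $\p$ introduced in Remark \ref{phi}. Since Remark \ref{phi} also gives $v^{2}=1+|D\p|^{2}=1+2w$, an upper bound on $w$ yields one on $v$.

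I would first differentiate \eqref{phiScalarFlow} once in space to obtain an evolution equation for $\p_{i}$, and then contract with $\p^{i}$ to produce one for $w$. Writing $\Psi:=\t^{p-1}vF^{-p}(\~h^{i}_{j})$, Lemma \ref{phiDer} splits $\Psi_{i}$ into three groups:
\begin{enumerate}[\upshape(i)]
\item a zeroth-order piece from $(\t^{p-1})_{i}=(p-1)\t^{p-1}\dot\t\,\p_{i}$;
\item a piece carrying the factor $v_{i}=v^{-1}\p_{ki}\p^{k}$;
\item a higher-order piece $-p\,\t^{p-1}vF^{-p-1}F^{k}_{l}\,\~h^{l}_{k;i}$, in which $\~h^{l}_{k;i}$ is expanded via Lemma \ref{phiDer}.
\end{enumerate}

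Next, suppose towards contradiction that there is a first time $t_{0}>0$ and a point $x_{0}\in\S^{n}$ with $v(t_{0},x_{0})>\sup v(0,\cdot)$; then $w$ attains a spatial maximum at $(t_{0},x_{0})$. At that point $Dw=0$ gives $\p^{k}\p_{ki}=0$ and in particular $v_{i}=0$ (so group (ii) vanishes identically), while $D^{2}w\leq 0$ reduces to the symmetric matrix inequality
\[
\p^{k}\p_{kij}\leq -\p^{k}_{i}\p_{kj}.
\]
Inserting $v_{i}=0$ and $\p^{k}\p_{ki}=0$ into group (iii) causes all intermediate lower-order terms to drop out, and after symmetrization the coefficient multiplying $\p^{k}\p_{kij}$ is the positive definite matrix $v^{-1}F^{k}_{l}\~g^{lj}$; together with the Hessian inequality above this forces group (iii) to contribute a nonpositive quantity at the maximum. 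Finally, group (i) contracted with $\p^{i}$ equals
\[
(p-1)\t^{p-1}\dot\t\,vF^{-p}|D\p|^{2},
\]
and since $\dot\t=\cosh u>0$ while $v,F,\t>0$, it has the sign of $p-1$; for $p\leq 1$ it is $\leq 0$. Summing the three pieces yields $\dot w(t_{0},x_{0})\leq 0$, contradicting the choice of $(t_{0},x_{0})$, so the standard max-principle argument gives \eqref{GradBound2}.

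The main obstacle is the algebraic bookkeeping in group (iii): one must carefully track which indices are raised with $\~g$ versus $\s$ through the substitutions from Lemma \ref{phiDer}, use $v_{i}=0$ and $\p^{k}\p_{ki}=0$ to eliminate the numerous intermediate terms, and verify that after symmetrization the coefficient of the third derivative is positive definite, so that the Hessian bound from the maximum principle can be absorbed with the correct sign. This is essentially the $p$-independent computation from \cite{cg:ImcfHyp}; the sole new ingredient for general $p$ is the sign analysis of group (i), and this is precisely what confines the argument to $p\leq 1$.
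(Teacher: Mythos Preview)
Your approach is the paper's: maximum principle for $w=\tfrac12|D\p|^{2}$ via \eqref{phiScalarFlow} and Lemma~\ref{phiDer}, with the only $p$-dependent novelty being the sign of the $(p-1)\t^{p-1}\dot\t$ term. Two small points to tighten. First, the third-order term in group~(iii) arrives as $F^{kr}\p_{rki}\p^{i}$ (contracted index \emph{last}), so to reach $w_{rk}$ you must commute derivatives via the Ricci identity on $\S^{n}$; this produces the extra contribution $\t^{p-1}\dot\Phi F^{kr}(\p_{k}\p_{r}-|D\p|^{2}\s_{kr})\leq 0$, which you do not mention. Second, at a spatial--time maximum one has $\dot w\geq 0$, so ``$\dot w\leq 0$'' is not yet a contradiction: the strict sign comes from the piece $-v^{-1}\t^{2}\p_{i}\d^{l}_{k}$ in $\~h^{l}_{k;i}$, which is \emph{not} killed by $v_{i}=0$ or $\p^{k}\p_{ki}=0$ and yields $-2\t^{p+1}\dot\Phi F^{kl}\~g_{kl}\,w<0$ at the maximum.
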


\begin{proof}
From Remark \ref{phi} we see, that it suffices to bound $|D\p|^{2}.$ Differentiate \eq\dot{\p}=-\Phi v\t^{p-1},\ \Phi=\Phi(F(\~{h}^{k}_{l})), \eeq
with respect to $\p^{i}D_{i}.$
From Lemma \ref{phiDer} we find, setting \eq w=\frac 12 |D\p|^{2}=\frac 12 \p_{k}\p^{k}, \eeq
\eq \dot{w}=\dot{\p}_{i}\p^{i}=-v\t^{p-1}\dot{\Phi}F^{k}_{l}\~{h}^{l}_{k;i}\p^{i}-\Phi\t^{p-1}v_{i}\p^{i}-(p-1)\Phi v\t^{p-1}\dot{\t}|D\p|^{2}. \eeq
Fix $0<T<T^{*}$ and suppose \eq \sup_{[0,T]\times\S^{n}} w=w(t_{0},x_{0})>0,\ t_{0}>0,\eeq
then at this point we have
\begin{align}\begin{split} \label{GradBound1}0&\leq(p-1)F^{-p}v\t^{p-1}\dot{\t}|D\p|^{2}
							-v\t^{p-1}\dot{\Phi}F^{k}_{l}(-v^{-1}\~{g}^{lr}\p_{rki}\p^{i}\\
							&\hphantom{=}+v^{-1}\t^{2}|D\p|^{2}\d^{l}_{k}+v^{-3}\p_{rk}\p^{r}\p^{l}_{i}\p^{i}+v^{-3}\p^{r}_{i}\p^{i}\p^{l}\p_{rk})\\
					&=2(p-1)F^{-p}v\t^{p-1}\dot{\t}w-2\dot{\Phi}\t^{p+1}F^{kl}\~{g}_{kl}w\\
					&\hphantom{=}+\t^{p-1}\dot{\Phi}F^{kr}\p_{rki}\p^{i}\\
					&=(2(p-1)F^{-p}v\t^{p-1}\dot{\t}-2\dot{\Phi}\t^{p+1}F^{kl}\~{g}_{kl})w\\
					&\hphantom{=}+\t^{p-1}\dot{\Phi}F^{kr}(\p_{irk}+\p_{k}\sigma_{ri}-\p_{i}\sigma_{rk})\p^{i}\\
					&=(2(p-1)F^{-p}v\t^{p-1}\dot{\t}-2\dot{\Phi}\t^{p+1}F^{kl}\~{g}_{kl})w\\
					&\hphantom{=}+\t^{p-1}\dot{\Phi}F^{kr}(\p_{k}\p_{r}-|D\p|^{2}\sigma_{kr})\\
					&\hphantom{=}+\t^{p-1}\dot{\Phi}F^{kr}w_{rk}-\t^{p-1}\dot{\Phi}F^{kr}\p_{ir}\p^{i}_{k}\\
					&<0. \end{split} \end{align}
Hence the estimate (\ref{GradBound2}) is valid, since $T$ is arbitrary.
\end{proof}

\subsection*{Curvature estimates and long time existence}

\begin{propss} \label{curvBound}
Let $x$ be a solution of the curvature flow (\ref{floweq}), $0<p<\infty.$ Then the curvature function is bounded from above and below, i.e. there exists $c=c(n,p,M_{0}),$ such that
\eq  0<c^{-1}\leq F(t,\xi)\leq c<\infty\ \ \forall (t,\xi)\in [0,T^{*})\times M.\eeq
\end{propss}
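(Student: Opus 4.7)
The plan is to apply the parabolic maximum principle to the evolution equation of $F$ (equivalently of $\Phi=-F^{-p}$) along the flow, exploiting the negative sectional curvature of $\H^{n+1}$ together with the concavity and monotonicity of $F$, and the $C^{0}$- and $C^{1}$-bounds already in hand.

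First, I would derive the evolution equation of $\Phi$ along (\ref{floweq}). Combining the Gauss and Weingarten formulas, the Codazzi identity, the Ricci identity, and the specific form $\-{R}_{\a\b\g\d}=\-{g}_{\a\d}\-{g}_{\b\g}-\-{g}_{\a\g}\-{g}_{\b\d}$ of the ambient Riemann tensor in $\H^{n+1}$, one obtains a quasilinear parabolic equation for $\Phi$ of the schematic form
\begin{equation*}
\dot\Phi \;=\; \frac{\del\Phi}{\del F}\,F^{kl}\nabla_{k}\nabla_{l}\Phi \;+\; \frac{\del\Phi}{\del F}\,F^{kl}h^{m}_{k}h_{ml}\,F \;-\;\Phi\,F^{kl}h^{m}_{k}h_{ml} \;+\; (\text{ambient terms}),
\end{equation*}
where the ambient contribution produced by the negative hyperbolic curvature appears with a definite (beneficial) sign, since $F^{kl}\-{g}_{kl}>0$ by monotonicity of $F$.

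Second, for the lower bound $F\geq c^{-1}>0$, I would apply the maximum principle to an auxiliary quantity of the form $W:=-\Phi\,\chi(u)$, with a suitable positive weight $\chi$ adapted to the spherical barrier solutions (e.g.\ $\chi(u)=\sinh u$ or $\cosh u$), chosen so that on a sphere $F=n\coth u$ forces $W$ to be bounded between two positive constants (using Corollary \ref{sphFlowGrowth} and Corollary \ref{thetaGrowth}). At a spatial maximum of $W$ at some time $t$, $W_{;i}=0$ and $F^{ij}W_{;ij}\leq 0$; combining this with the $C^{0}$-bound (Lemma \ref{oscBound}), the gradient estimate $v\leq c$ (Lemmas \ref{vconvex} and \ref{GradBound}), the concavity of $F$, and the favorable sign of the ambient term, one derives an inequality of the form $\dot W\leq C$ at the maximum, hence $W\leq C$ along the flow. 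This translates into a uniform upper bound on $|\Phi|$, i.e.\ into the desired lower bound on $F$.

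Third, for the upper bound $F\leq c$, a symmetric argument is applied either to the spatial minimum of $W$, or to a companion quantity such as $\log F-\lambda u$ for an appropriate $\lambda>0$; at the minimum the elliptic second-order term is nonnegative and, together with the concavity of $F$ and the ambient-curvature contribution, one again closes the bound. The main obstacle is this upper bound in the non-convex range $0<p\leq 1$, where one cannot invoke convexity of the leaves; one then relies essentially on the concavity of $F$, the uniform gradient estimate of Lemma \ref{GradBound}, and the beneficial sign of the hyperbolic ambient-curvature term in order to absorb the ``bad'' quadratic reaction term $F^{kl}h^{m}_{k}h_{ml}$ that appears in the evolution equation of $F$, yielding the two-sided bound on $F$.
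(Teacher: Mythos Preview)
Your overall strategy---maximum principle applied to auxiliary combinations of $\Phi$ with a geometric weight---is indeed what the paper does, but there are two concrete problems with the execution you describe.

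\textbf{The difficulty is in the lower bound, not the upper bound.} You write that ``the main obstacle is this upper bound'' and that one must ``absorb the bad quadratic reaction term $F^{kl}h^{m}_{k}h_{ml}$''. This is backwards. For the upper bound on $F$ (equivalently, the upper bound on $-\log(-\Phi)$), the term $-\dot\Phi F^{ij}h_{ik}h^{k}_{j}$ enters with the \emph{good} sign and can simply be dropped; the paper uses the test function $w=-\log(-\Phi)+u-t/n^{p}$ and the argument closes in two lines once one also uses $\coth u\geq 1$ to handle the ambient term. The genuinely delicate direction is the \emph{lower} bound $F\geq c^{-1}$: there the quadratic term $+\dot\Phi F^{ij}h_{ik}h^{k}_{j}$ in the evolution of $\log(-\Phi)$ has the wrong sign and cannot be bounded without further information, since at this stage no principal-curvature bound is available.

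\textbf{A function of $u$ alone cannot cancel the bad term.} Your proposed weights $\chi(u)=\sinh u$ or $\cosh u$ will not work: the evolution of any function of $u$ alone involves only the equation \eqref{u} and never produces an $F^{ij}h_{ik}h^{k}_{j}$ term. The paper instead uses the support-function-type quantity $\chi=v/\sinh u$ (rescaled by $e^{t/n^{p}}$), whose evolution equation \eqref{chi} contains precisely $-\dot\Phi F^{ij}h_{ik}h^{k}_{j}\chi$. Adding $\log\chi$ to $\log(-\Phi)$ therefore makes the two quadratic terms cancel exactly, and the remaining terms yield an algebraic inequality in $F$ at the maximum. Note also that $W=-\Phi\sinh u$ is not even bounded on the spherical barriers (it grows like $e^{t/n^{p}}$), so that particular choice fails already at the level of scaling.

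Finally, concavity of $F$ plays no role in this proposition; it is used only later (Proposition~\ref{kappaBound}) for the principal-curvature bound via the second-derivative inequality for $F^{kl,rs}$.
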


\begin{proof}
The proof proceeds similarly to the one in \cite[Lemma 4.1]{cg:ImcfHyp}.\\

Define \eq \chi=v\eta(u)\equiv \frac{v}{\sinh u} \eeq
and note \eq \dot{\eta}=-\frac{\-{H}}{n}\eta, \eeq
where $\eta=\eta(r)$ and $\-{H}$ is the mean curvature of $S_{r}.$ Then $\chi$ satisfies
\eq \label{chi} \dot{\chi}-\dot{\Phi}F^{ij}\chi_{ij}=-\dot{\Phi}F^{ij}h_{ik}h^{k}_{j}\chi-2\chi^{-1}\dot{\Phi}F^{ij}\chi_{i}\chi_{j}+(\dot{\Phi}F+\Phi)\frac{\-{H}}{n}v\chi,\eeq
cf. \cite[Lemma 5.8]{cg:spaceform}.
$\Phi,$ and also $-\Phi,$ satisfy
\eq \label{Phi} \Phi'-\dot{\Phi}F^{ij}\Phi_{ij}=\dot{\Phi}F^{ij}h_{ik}h^{k}_{j}\Phi+K_{N}\dot{\Phi}F^{ij}g_{ij}\Phi, \eeq
where $'$ denotes the time derivative of the evolution and $\dot{\Phi}=\frac{d}{dr}\Phi(r),$ cf. \cite[Lemma 2.3.4]{cg:cp}. Note that we have $K_{N}=-1.$
The function $u$ satisfies
\eq \label{u} \dot{u}-\dot{\Phi}F^{ij}u_{ij}=(\dot{\Phi}F-\Phi)v^{-1}-\dot{\Phi}F^{ij}\-{h}_{ij}, \eeq
where $\dot{u}$ is a total derivative, cf. \cite[Lemma 3.3.2]{cg:cp}.\ \\

(i) We first prove $F\geq c>0.$ Set 
\eq \~{\chi}=\chi e^{\frac{t}{n^{p}}}. \eeq
Then there exists $c=c(n,p,M_{0}),$ such that
\eq 0<c^{-1}\leq \~{\chi}(t,\xi)\leq c<\infty\ \ \forall (t,\xi)\in [0,T^{*})\times M, \eeq
where we used Corollary \ref{thetaGrowth} and $v\leq c.$
Set \eq w=\log(-\Phi)+\log \~{\chi}, \eeq
fix $0<T<T^{*}$
and suppose \eq \sup_{[0,T]\times M} w=w(t_{0},\xi_{0})>0,\ t_{0}>0. \eeq
% that $w$ attains a maximal value in $(t_{0},\xi_{0})\in (0,T]\times M,$ $t_{0}\leq T<T^{*}.$ 
Then in $(t_{0},\xi_{0})$ there holds
\eq \frac{\Phi_{i}}{\Phi}=-\frac{\chi_{i}}{\chi} \eeq and
\eq 0\leq \dot{w}-\dot{\Phi}F^{ij}w_{ij}=-\dot{\Phi}F^{ij}g_{ij}+(\dot{\Phi}F+\Phi)\frac{\-{H}}{n}v+\frac{1}{n^{p}}.\eeq 
Thus\begin{align}\begin{split}  0&\leq -pF^{ij}g_{ij}+(p-1)F\frac{\-{H}}{n}v+n^{-p}F^{p+1}\\
								&\leq -pn+(p-1)F\frac{\-{H}}{n}v+n^{-p}F^{p+1}. \end{split}\end{align}
Moreover \eq \-{H}=n\coth u\leq n\coth\inf u(0,\cdot),\eeq so that
 \eq 0\leq\begin{cases} -pn+n^{-p}F^{p+1}, &0<p\leq 1\\
							-pn+(p-1)\coth\inf u(0,\cdot)Fv+n^{-p}F^{p+1}, &p>1. \end{cases} \eeq
Without loss of generality suppose $w(t_{0},\xi_{0})$ is so large, that $F(t_{0},\xi_{0})<1.$ Then
\eq F(t_{0},\xi_{0})\geq \begin{cases} p^{\frac{1}{p+1}}n, & 0<p\leq 1 \\
							\frac{pn-n^{-p}}{(p-1)cv}, &p>1, \end{cases} \eeq
$c=c(M_{0}).$
Hence, at a point, where $w$ attains a maximum, we have $F\geq c=c(n,p,M_{0}).$ Thus
\eq w\leq w(t_{0},\xi_{0})\leq \log\left(\frac{1}{c^{p}}\right)+c\equiv c(n,p,M_{0})\eeq and
\eq \frac{1}{F^{p}}= e^{w}\~{\chi}^{-1}\leq c(n,p,M_{0}). \eeq
Thus, $F$ is uniformly bounded below in $[0,T^{*}).$ \ \\ 

(ii) We prove $F\leq c.$\ \\
Define \eq \~{u}=u-\frac{t}{n^{p}}. \eeq
Then, by Corollary \ref{sphFlowGrowth} and Lemma \ref{oscBound} we have 
\eq \~{u}>c. \eeq
Set \eq w=-\log(-\Phi)+\~{u}. \eeq
Then, in a maximal point $(t_{0},\xi_{0})\in (0,T]\times M,$ $0<T<T^{*},$ of $w$ we have
\begin{align}\begin{split} 0&\leq \dot{w}-\dot{\Phi}F^{ij}w_{ij}\\ &=-\dot{\Phi}F^{ij}h_{ik}h^{k}_{j}+\dot{\Phi}F^{ij}g_{ij}-\dot{\Phi}F^{ij}(\log(-\Phi))_{i}(\log(-\Phi))_{j}\\	
                         										&\hphantom{=}+(\dot{\Phi}F-\Phi)v^{-1}-\dot{\Phi}F^{ij}\-{h}_{ij}-\frac{1}{n^{p}}\\
										 	&=-\dot{\Phi}F^{ij}h_{ik}h^{k}_{j}+\dot{\Phi}F^{ij}(u_{i}u_{j}+\-{g}_{ij}-\coth u\-{g}_{ij})\\
											&\hphantom{=}-\dot{\Phi}F^{ij}u_{i}u_{j}+(\dot{\Phi}F-\Phi)v^{-1}-\frac{1}{n^{p}}\\
											&\leq (p+1)F^{-p}v^{-1}-\frac{1}{n^{p}}, \end{split}\end{align}
		where we used $\coth u\geq 1$ and $0=w_{i}$ in $(t_{0},\xi_{0}).$ Then
\eq F(t_{0},\xi_{0})\leq c(n,p,M_{0}), \eeq leading to \eq w\leq c(n,p,M_{0}) \eeq and finally
\eq F^{p}\leq e^{w}e^{-\~{u}}\leq c(n,p,M_{0}). \eeq
\end{proof}

\begin{propss} \label{kappaBound}
The leaves $M(t)$ of (\ref{floweq}) have uniformly bounded principal curvatures, i.e. there exists $c=c(n,p,M_{0}),$ such that
\eq \k_{i}(t,\xi)\leq c\ \ \forall (t,\xi)\in [0,T^{*})\times M. \eeq
Thus the principal curvatures stay in a compact set $K=K(n,p,M_{0})\subset\Gamma,$ in view of Proposition \ref{curvBound}. 
\end{propss}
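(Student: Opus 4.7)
The plan is to bound the largest principal curvature $\kappa_{\max}$ via the maximum principle applied to a suitable auxiliary function. Combined with the already proven lower bound $F \geq c^{-1} > 0$ from Proposition \ref{curvBound} and the fact that $F|_{\partial\Gamma} = 0$, a uniform upper bound on $\kappa_{\max}$ keeps the principal curvatures in a compact subset of $\Gamma$.

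First, I would record the evolution equation for the mixed Weingarten tensor $h^i_j$ under $\dot x = -\Phi\nu$ in a Riemannian ambient manifold, as in \cite[Lemma 2.4.1]{cg:cp}, specialised to $\H^{n+1}$ and to $\Phi(r)=-r^{-p}$. With this choice $\dot\Phi > 0$ and $\ddot\Phi < 0$, so the $\ddot\Phi|\nabla F|^2$-contribution and the concavity term $-\dot\Phi F^{kl,rs} h_{kl;}{}^i h_{rs;j}$ are both nonpositive at a maximum. The ``bad'' terms are the quadratic $-\Phi h^i_k h^k_j = F^{-p} h^i_k h^k_j$ and a definite-sign ambient-curvature contribution coming from $K_{\H^{n+1}} = -1$, both producing a factor of order $\kappa_{\max}^2$ in the evolution of the largest eigenvalue.

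Following the strategy already used in Proposition \ref{curvBound}, I would then apply the maximum principle to
$$w = \log \kappa_{\max} + \log \tilde\chi, \qquad \tilde\chi = \chi\, e^{t/n^p}, \quad \chi = v/\sinh u,$$
where $\kappa_{\max}$ is treated as an isolated eigenvalue by the standard procedure of \cite[Lemma 2.2.19]{cg:cp}, and $\tilde\chi$ is uniformly two-sided bounded by Corollary \ref{thetaGrowth}, Lemma \ref{oscBound} and the gradient bounds of Lemmas \ref{vconvex}, \ref{GradBound}. The factor $\log\chi$ is chosen precisely so that the term $-\dot\Phi F^{ij} h_{ik} h^k_j \chi$ in \eqref{chi} cancels the corresponding contribution from the evolution of $\log \kappa_{\max}$; at the critical point the relation $(\log\kappa_{\max})_i = -(\log\tilde\chi)_i$ makes the gradient terms $\dot\Phi F^{ij}(\log\kappa_{\max})_i(\log\kappa_{\max})_j$ and the $\log\tilde\chi$ counterpart balance. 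The weight $e^{t/n^p}$ introduces an extra $1/n^p$ term, mirroring the structure of the calculation in part (i) of Proposition \ref{curvBound}.

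Working out the inequality at the maximum $(t_0,\xi_0)$, using the Euler identity $F^{ij} h_{ij} = F$, the concavity-and-normalisation bound $F^{ij} g_{ij} \geq F(1,\ldots,1) = n$, the upper estimate $\bar H = n \coth u \leq n \coth \inf u(0,\cdot)$ from Lemma \ref{oscBound} and the two-sided bound on $F$ from Proposition \ref{curvBound}, one arrives at a relation of the schematic form
$$0 \leq c_1(n,p,M_0) - c_2(n,p,M_0)\,\kappa_{\max}(t_0,\xi_0) + \text{l.o.t.},$$
which forces $\kappa_{\max}(t_0,\xi_0) \leq c(n,p,M_0)$. The main technical obstacle is keeping the ``bad'' quadratic term $F^{-p} h^i_k h^k_j$ in check: this is where the Euler identity $F^{ij} h_{ij} = F$ and the favourable sign of the ambient curvature contribution in \eqref{Phi} (arising from $K_N = -1$ in $\H^{n+1}$) are essential. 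The corresponding computation for $p=1$ is carried out in detail in \cite[Lemma 4.2]{cg:ImcfHyp}; the extension to $0 < p < \infty$ is largely bookkeeping, and the $p > 1$ case uses the convexity-based gradient bound from Lemma \ref{vconvex} to absorb $(p-1)$-type terms, exactly as in Proposition \ref{curvBound}.
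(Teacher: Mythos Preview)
Your test function $w=\log\kappa_{\max}+\log\tilde\chi$ is too simple to close the argument. When you add the evolution of $\log h^n_n$ (from the $h^i_j$ equation) to that of $\log\tilde\chi$ (from \eqref{chi}), the quadratic terms $\dot\Phi F^{kl}h_{rk}h^r_l$ and $-\dot\Phi F^{ij}h_{ik}h^k_j$ cancel \emph{exactly}, and at the critical point the first-order terms cancel as you say. What survives at the maximum is
\[
0\le -(p+1)F^{-p}\kappa_n+pF^{-p-1}F^{kl}g_{kl}+(p-1)F^{-p}\tfrac{\bar H}{n}v+\tfrac{1}{n^{p}}+\text{(nonpositive terms)}.
\]
The term $pF^{-p-1}F^{kl}g_{kl}=pF^{-p-1}\sum_i F_i$ is \emph{not} a lower-order term: for general concave $F$ with $F_{|\partial\Gamma}=0$ it is not bounded on the set $\{c^{-1}\le F\le c\}$ and can be of the same order as $\kappa_n$ with an unfavourable constant (think of $F=n\sigma_n^{1/n}$ with $\kappa_1\kappa_n$ fixed and $\kappa_1\to 0$). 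So you do not obtain an inequality of the form $0\le c_1-c_2\kappa_{\max}$. Incidentally, the $K_N=-1$ contribution enters the $h^i_j$ equation as $+\dot\Phi F^{kl}g_{kl}h^i_j$, i.e.\ with a \emph{bad} sign for an upper bound, not a favourable one; and the coefficient of $h^{ki}h_{kj}$ is $(\Phi-\dot\Phi F)=-(p+1)F^{-p}$, which is actually the helpful term.

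The paper avoids this problem by two modifications of the test function and a case distinction. First, it replaces $\log\tilde\chi$ by $\phi=-\log(\hat\chi-\theta)$ with $\hat\chi=\tilde\chi^{-1}$ and a fixed $\theta>0$; then the $F^{ij}h_{ik}h^k_j$ terms no longer cancel, leaving a good residual $-\dot\Phi F^{ij}h_{ik}h^k_j\,\tfrac{\theta}{\hat\chi-\theta}$. Second, it adds $\lambda\tilde u$, whose evolution \eqref{u} contributes $-\lambda\dot\Phi F^{ij}\bar h_{ij}\le -\lambda\tilde c_0\,\dot\Phi F^{ij}g_{ij}$, precisely the structure needed to absorb the ambient-curvature term $\dot\Phi F^{kl}g_{kl}$. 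Third, the third-order term $\dot\Phi F^{kl,rs}h_{kl;n}{h_{rs;}}^n(h^n_n)^{-1}$ is \emph{not} simply discarded by concavity; one uses the sharper inequality $F^{kl,rs}\eta_{kl}\eta_{rs}\le\frac{2}{\kappa_n-\kappa_1}\sum_k(F^{nn}-F^{kk})(\eta_{nk})^2$ together with Codazzi, and then argues in the two cases $\kappa_1<-\epsilon_1\kappa_n$ and $\kappa_1\ge-\epsilon_1\kappa_n$. In the first case the residual $-\dot\Phi F^{ij}h_{ik}h^k_j\,\tfrac{\theta}{\hat\chi-\theta}\le -\tfrac{\theta}{n(\hat\chi-\theta)}\epsilon_1^2\kappa_n^2\,\dot\Phi F^{ij}g_{ij}$ dominates; in the second case the third-order and gradient terms combine into $\dot\Phi F^{nn}\|D\phi+\lambda Du\|^2$, which is beaten by $-\dot\Phi F^{nn}\kappa_n^2\,\tfrac{\theta}{\hat\chi-\theta}$, while $\lambda>\tilde c_0^{-1}$ takes care of $\dot\Phi F^{kl}g_{kl}$. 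The relevant model is \cite[Lemma~4.4]{cg:ImcfHyp}, not Lemma~4.2.
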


\begin{proof}
Basically, the proof of the corresponding lemma in \cite[Lemma 4.4]{cg:ImcfHyp}, applies in our case with slight modifications.\\

 Since $\H^{n+1}$ has constant curvature $K_{N}=-1,$ we have 
\begin{align}\begin{split} \dot{h}^{i}_{j}-\dot{\Phi}F^{kl}h^{i}_{j;kl}&=\dot{\Phi}F^{kl}h_{rk}h^{r}_{l}h^{i}_{j}+(\Phi-\dot{\Phi}F)h^{ki}h_{kj}+\ddot{\Phi}F_{j}F^{i}\\
												&\hphantom{=}+\dot{\Phi}F^{kl,rs}h_{kl;j}{h_{rs;}}^{i}-(\Phi+\dot{\Phi}F)\d^{i}_{j}+\dot{\Phi}F^{kl}g_{kl}h^{i}_{j}.
												\end{split}\end{align}
Let $\~{\chi}=\chi e^{\frac{t}{n^{p}}}.$ Setting
\eq \hat{\chi}=\~{\chi}^{-1}, \eeq
we find a constant $\theta >0,$ such that
\eq 2\theta \leq \hat{\chi}(t,\xi)\ \ \forall (t,\xi)\in[0,T^{*})\times M. \eeq
Define the functions
\eq \zeta=\sup\{h_{ij}\eta^{i}\eta^{j}\colon \norm{\eta}^{2}=g_{ij}\eta^{i}\eta^{j}=1\}, \eeq
\eq \phi=-\log(\hat{\chi}-\theta) \eeq
and \eq w=\log \zeta +\phi+\lambda \~{u}, \eeq
where $\~{u}=u-\frac{t}{n^{p}},$ and $\lambda$ is to be chosen later.
We wish to bound $w$ from above. Thus, suppose $w$ attains a maximal value at $(t_{0},\xi_{0})\in (0,T]\times M,$ $T<T^{*}.$
Choose Riemannian normal coordinates in $(t_{0},\xi_{0}),$ such that in this point we have
\eq g_{ij}=\d_{ij}\ \wedge\ h_{ij}=\k_{i}\d_{ij}\ \wedge\ \k_{1}\leq \ldots\leq \k_{n}. \eeq
Since $\zeta$ is only continuous in general, we need to find a differentiable version instead. Set 
\eq \~{\zeta}=\frac{h_{ij}\~{\eta}^{i}\~{\eta}^{j}}{g_{ij}\~{\eta}^{i}\~{\eta}^{j}}, \eeq
where $\~{\eta}=(\~{\eta}^{i})=(0,\ldots,0,1).$\ \\
At $(t_{0},\xi_{0})$ we have \eq h_{nn}=h^{n}_{n}=\k_{n}=\zeta=\~{\zeta} \eeq
and in a neighborhood of $(t_{0},\xi_{0})$ there holds
\eq \~{\zeta}\leq \zeta. \eeq
Using $h_{n}^{n}=h_{nk}g^{kn},$ we find that at $(t_{0},\xi_{0})$
\eq \dot{\~{\zeta}}=\dot{h}^{n}_{n} \eeq
and the spatial derivatives also coincide, cf. \cite[p.13]{cg:ImcfHyp}. Replacing $w$ by $\~{w}=\log\~{\zeta}+\phi+\lambda\~{u},$ we see that $\~{w}$ attains a maximal value at $(t_{0},\xi_{0}),$ where $\~{\zeta}$ satisfies the same differential equation in this point as $h^{n}_{n}.$ Thus, without loss of generality, we may pretend $h^{n}_{n}$ to be a scalar and $w$ to be given by
\eq w=\log h^{n}_{n}+\phi+\lambda\~{u}. \eeq
Since \eq \dot{\hat{\chi}}-\dot{\Phi}F^{ij}\hat{\chi}_{ij}=-\~{\chi}^{-2}(\dot{\~{\chi}}-\dot{\Phi}F^{ij}\~{\chi}_{ij})-2\~{\chi}^{-3}\dot{\Phi}F^{ij}\~{\chi}_{i}\~{\chi}_{j}, \eeq
we find \begin{align}\begin{split}
 \dot{\phi}-\dot{\Phi}F^{ij}\phi_{ij}&=(\hat{\chi}-\theta)^{-1}(\~{\chi}^{-2}(\dot{\~{\chi}}-\dot{\Phi}F^{ij}\~{\chi}_{ij})+2\~{\chi}^{-3}\dot{\Phi}F^{ij}\~{\chi}_{i}\~{\chi}_{j})\\
 						&\hphantom{=}-\dot{\Phi}F^{ij}\frac{(\hat{\chi}-\theta)_{i}(\hat{\chi}-\theta)_{j}}{(\hat{\chi}-\theta)^{2}}\\
						&=(\hat{\chi}-\theta)^{-1}(-\dot{\Phi}F^{ij}h_{ik}h^{k}_{j}\hat{\chi}+(\dot{\Phi}F+\Phi)\frac{\-{H}}{n}v\hat{\chi}+\frac{1}{n^{p}}\hat{\chi})\\
						&\hphantom{=}-\dot{\Phi}F^{ij}(\log(\hat{\chi}-\theta))_{i}(\log(\hat{\chi}-\theta))_{j}. \end{split}\end{align}
Thus, in $(t_{0},\xi_{0})$ we infer
\begin{align} \begin{split}\label{kappabound1}
0&\leq \dot{w}-\dot{\Phi}F^{ij}w_{ij}\\ 
&=\dot{\Phi}F^{kl}h_{kr}h^{r}_{l}+(\Phi-\dot{\Phi}F)h^{n}_{n}+\ddot{\Phi}F_{n}F^{n}(h^{n}_{n})^{-1}\\
					&\hphantom{=}+\dot{\Phi}F^{kl,rs}h_{kl;n}{h_{rs;}}^{n}(h^{n}_{n})^{-1}-(\Phi+\dot{\Phi}F)(h_{n}^{n})^{-1}+\dot{\Phi}F^{kl}g_{kl}\\
					&\hphantom{=}-\dot{\Phi}F^{ij}h_{ik}h^{k}_{j}\frac{\hat{\chi}}{\hat{\chi}-\theta}+(\dot{\Phi}F+\Phi)\frac{\-{H}}{n}v\frac{\hat{\chi}}{\hat{\chi}-\theta}+\frac{1}{n^{p}}\frac{\hat{\chi}}{\hat{\chi}-\theta}\\
					&\hphantom{=}+\lambda(\dot{\Phi}F-\Phi)v^{-1}-\lambda\dot{\Phi}F^{ij}\-{h}_{ij}-\frac{\lambda}{n^{p}}\\
					&\hphantom{=}-\dot{\Phi}F^{ij}(\log(\hat{\chi}-\theta))_{i}(\log(\hat{\chi}-\theta))_{j}+\dot{\Phi}F^{ij}(\log h^{n}_{n})_{i}(\log h^{n}_{n})_{j}.
					\end{split}\end{align}
In the present coordinate system we have
\eq F^{kl,rs}\eta_{kl}\eta_{rs}\leq\sum_{k\neq l}\frac{F^{kk}-F^{ll}}{\k_{k}-\k_{l}}(\eta_{kl})^{2}\leq \frac{2}{\k_{n}-\k_{1}}\sum_{k=1}^{n}(F^{nn}-F^{kk})(\eta_{nk})^{2} \eeq for all symmetric tensors $(\eta_{kl})$ and \eq F^{nn}\leq \ldots\leq F^{11},\eeq
 cf. \cite[(4.28), (4.29)]{cg:ImcfHyp} and the references therein. Using those inequalities, $\ddot{\Phi}<0$ as well as
\eq (\log h^{n}_{n})_{i}=-\phi_{i}-\lambda\~{u}_{i} \eeq in $(t_{0},\xi_{0}),$ we obtain from (\ref{kappabound1})
\begin{align} \begin{split} \label{kappabound2}
0&\leq -\dot{\Phi}F^{ij}h_{ik}h^{k}_{j}\frac{\theta}{\hat{\chi}-\theta}+(\Phi-\dot{\Phi}F)h^{n}_{n}-(\Phi+\dot{\Phi}F)(h^{n}_{n})^{-1}+\dot{\Phi}F^{kl}g_{kl}\\
   &\hphantom{=}+(\dot{\Phi}F+\Phi)\frac{\-{H}}{n}v\frac{\hat{\chi}}{\hat{\chi}-\theta}+\frac{1}{n^{p}}\frac{\hat{\chi}}{\hat{\chi}-\theta}\\
   &\hphantom{=}+\lambda(\dot{\Phi}F-\Phi)v^{-1}-\lambda\dot{\Phi}F^{ij}\-{h}_{ij}-\frac{\lambda}{n^{p}}\\
   &\hphantom{=}+2\lambda\dot{\Phi}F^{ij}\phi_{i}\~{u}_{j}+\lambda^{2}\dot{\Phi}F^{ij}\~{u}_{i}\~{u}_{j}\\
   &\hphantom{=}+\frac{2}{\k_{n}-\k_{1}}\dot{\Phi}\sum_{i=1}^{n}(F^{nn}-F^{ii})({h_{ni;}}^{n})^{2}(h^{n}_{n})^{-1}. \end{split}\end{align}
 There holds \begin{align}\begin{split} F^{ij}\-{h}_{ij}&=F^{ij}\-{g}_{ij}\coth u\geq F^{ij}\-{g}_{ij}=F^{ij}g_{ij}-F^{ij}u_{i}u_{j}\\
 									&\geq F^{ij}g_{ij}(1-\norm{Du}^{2})=v^{-2}F^{ij}g_{ij}\geq \~{c}_{0}F^{ij}g_{ij}, \end{split}\end{align}
	where $\~{c}_{0}=c(n,p,M_{0}),$ and 
	\eq h_{ni;n}=h_{nn;i}, \eeq
in view of the Codazzi equation.
We now estimate (\ref{kappabound2}).\\
We distinguish two cases.\ \\

Case 1: $\k_{1}<-\e_{1}\k_{n}, 0<\e_{1}<1.$ \ \\

Then \eq F^{ij}h_{ki}h^{k}_{j}\geq \frac 1n F^{ij}g_{ij}\e_{1}^{2}\k_{n}^{2}, \eeq
cf. \cite[p.14, (4.47)]{cg:ImcfHyp}. Furthermore, by \cite[(5.29)]{cg:spaceform}, we have
\eq v_{i}=-v^{2}h^{k}_{i}u_{k}+v\frac{\-{H}}{n}u_{i}=(-v^{2}\k_{i}+v\frac{\-{H}}{n})u_{i} \eeq
and thus \eq \norm{Dv}\leq c\abs{\k_{n}}\norm{Du}+c\norm{Du}, c=c(n,p,M_{0}) \eeq
 so that \eq \norm{D\phi}\leq c\norm{Dv}+c\norm{Du}\leq c\abs{\k_{n}}\norm{Du}+c\norm{Du}. \eeq
 Hence (\ref{kappabound2}) can be estimated:
 \begin{align} \begin{split}
 0&\leq \dot{\Phi}F^{ij}g_{ij}\Big(-\frac 1n \e_{1}^{2}\k_{n}^{2}\frac{\theta}{\hat{\chi}-\theta}+1-\lambda \~{c}_{0}+2\lambda c\norm{Du}^{2}(\k_{n}+1)\\
 	&\hphantom{=}+\lambda^{2}\norm{Du}^{2}\Big)\\
  &\hphantom{=}-(\Phi+\dot{\Phi}F)\k_{n}^{-1}+(\dot{\Phi}F+\Phi)\frac{\-{H}}{n}v\frac{\hat{\chi}}{\hat{\chi}-\theta}+\frac{1}{n^{p}}\frac{\hat{\chi}}{\hat{\chi}-\theta}\\
  &\hphantom{=}+\lambda(\dot{\Phi}F-\Phi)v^{-1}. \end{split}\end{align}
The last two lines are uniformly bounded by some $c=c(n,p,M_{0})$ and the first line converges to $-\infty,$ if $\k_{n}\rightarrow\infty,$ where we use $\dot{\Phi}F^{ij}g_{ij}\geq c>0$ and the boundedness of all the other coefficients. We conclude, that in this case any choice of $\lambda$ yields
\eq \k_{n}\leq c(n,p,M_{0}). \eeq 

Case 2: $\k_{1}\geq -\e_{1}\k_{n}.$\ \\

Then \eq \begin{aligned} &\frac{2}{\k_{n}-\k_{1}}\dot{\Phi}\sum_{i=1}^{n}(F^{nn}-F^{ii})({h_{ni;}}^{n})^{2}(h_{n}^{n})^{-1}\\ &\hphantom{=}\leq
 \frac{2}{1+\e_{1}}\dot{\Phi}\sum_{i=1}^{n}(F^{nn}-F^{ii})(\log h^{n}_{n})^{2}_{i},\end{aligned} \eeq
so that
\begin{align}\begin{split}
&\dot{\Phi}F^{ij}(\log h^{n}_{n})_{i}(\log h^{n}_{n})_{j}+\frac{2}{\k_{n}-\k_{1}}\dot{\Phi}\sum_{i=1}^{n}(F^{nn}-F^{ii})({h_{ni;}}^{n})^{2}(h^{n}_{n})^{-1}\\
&\hphantom{=}\leq \frac{2}{1+\e_{1}}\dot{\Phi}\sum_{i=1}^{n}F^{nn}(\log h^{n}_{n})^{2}_{i}-\frac{1-\e_{1}}{1+\e_{1}}\dot{\Phi}\sum_{i=1}^{n}F^{ii}(\log h^{n}_{n})_{i}^{2}\\
&\hphantom{=}\leq  \frac{2}{1+\e_{1}}\dot{\Phi}\sum_{i=1}^{n}F^{nn}(\log h^{n}_{n})^{2}_{i}-\frac{1-\e_{1}}{1+\e_{1}}\dot{\Phi}F^{nn}\sum_{i=1}^{n}(\log h^{n}_{n})^{2}_{i}\\
&\hphantom{=}=\dot{\Phi}F^{nn}\norm{D\phi+\lambda Du}^{2}\\
&\hphantom{=}=\dot{\Phi}F^{nn}(\norm{D\phi}^{2}+\lambda^{2}\norm{Du}^{2}+2\lambda\langle D\phi,D\~{u}\rangle), \end{split}\end{align}
where we used $g_{ij}=\d_{ij}.$
We now choose $\lambda=\lambda(n,p,M_{0}),$ such that
\eq \lambda >\~{c}_{0}^{-1}. \eeq
Estimating (\ref{kappabound1}) again yields
\begin{align}\begin{split} 0&\leq -\dot{\Phi}F^{nn}\k_{n}^{2}\frac{\theta}{\hat{\chi}-\theta}-(\Phi+\dot{\Phi}F)\k_{n}^{-1}+\dot{\Phi}F^{kl}g_{kl}(1-\lambda\~{c}_{0})\\
	&\hphantom{=}+(\Phi-\dot{\Phi}F)\k_{n}+(\dot{\Phi}F+\Phi)\frac{\-{H}}{n}v\frac{\hat{\chi}}{\hat{\chi}-\theta}+\frac{1}{n^{p}}\frac{\hat{\chi}}{\hat{\chi}-\theta}\\
	&\hphantom{=}+\lambda(\dot{\Phi}F-\Phi)v^{-1}-\frac{\lambda}{n^{p}}\\
 &\hphantom{=}+\dot{\Phi}F^{nn}(\lambda^{2}\norm{Du}^{2}+2\lambda\norm{D\phi}\norm{Du}), \end{split}\end{align} implying
 \eq \k_{n}(t_{0},\xi_{0})\leq c(n,p,M_{0}).\eeq
 Thus, $w$ and $\zeta$ as well, are bounded from above, implying the claim.
\end{proof}

\begin{thmss} \label{longtime}
Under the hypothesis of Theorem \ref{mainthm} we have
\eq T^{*}=\infty. \eeq
\end{thmss}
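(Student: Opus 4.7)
The plan is a standard continuation argument: argue by contradiction and show that the a priori estimates assembled in the previous subsections yield uniform $C^{k,\alpha}$ bounds on the scalar height function $u$, enough to extend the flow past the supposed maximal time.

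Suppose $T^{*}<\infty$. By Remark \ref{shorttime}, it is enough to show that the scalar equation \eqref{scalarFlow} admits a smooth extension past $T^{*}$. From Lemma \ref{oscBound} and Corollary \ref{thetaGrowth} we obtain uniform two-sided bounds on $u$ on $[0,T^{*})\times\S^{n}$. The $C^{1}$-estimate is provided by Lemma \ref{vconvex} in the case $p>1$ and by Lemma \ref{GradBound} in the case $p\leq 1$, so $v\leq c$ uniformly. Combining \eqref{graphA}, namely $h_{ij}v^{-1}=-u_{ij}+\bar h_{ij}$, with the uniform two-sided curvature bounds from Proposition \ref{curvBound} and Proposition \ref{kappaBound}, we obtain uniform estimates on $D^{2}u$ on $[0,T^{*})\times\S^{n}$.

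Next, Proposition \ref{kappaBound} says that the principal curvatures stay in a compact subset $K\Su\Gamma$ for all $t\in[0,T^{*})$. On $K$, the curvature function $F$ is smooth, strictly positive, monotone and concave, hence $\Phi(F)=-F^{-p}$ is smooth and the linearized operator $\dot\Phi F^{ij}$ is uniformly positive definite; thus \eqref{scalarFlow}, read as a fully nonlinear parabolic PDE for $u$ on $\S^{n}$, is uniformly parabolic with a concave (in $D^{2}u$) right-hand side. The Krylov--Safonov and Evans--Krylov theorems then supply a uniform $C^{2,\alpha}$-estimate for $u$ on $[0,T^{*})\times\S^{n}$, and a standard Schauder bootstrap yields uniform $C^{k,\alpha}$-bounds for every $k\in\N$.

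Therefore $u(t,\cdot)$ converges in $C^{\infty}(\S^{n})$ as $t\nearrow T^{*}$ to a smooth, $F$-admissable limit function $u(T^{*},\cdot)$, whose graph is a smooth $F$-admissable hypersurface. Applying the short-time existence result (\cite[Thm.~2.5.19, Thm.~2.6.1]{cg:cp}, cf.\ Remark \ref{shorttime}) with this limit as new initial datum provides a smooth continuation of the flow past $T^{*}$, contradicting the maximality of $T^{*}$. Hence $T^{*}=\infty$. The only non-routine ingredient is the combination of the Krylov--Safonov/Evans--Krylov machinery with the concavity of $F$, which is available here precisely because the principal curvatures remain in a compact subset of $\Gamma$ on which $F$ is smooth and concave.
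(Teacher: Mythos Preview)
Your argument is correct and follows the same route as the paper: assemble the uniform $C^{0}$, $C^{1}$ and $C^{2}$ bounds from the preceding subsections, observe that the principal curvatures stay in a compact subset of $\Gamma$ so that the equation is uniformly parabolic with concave nonlinearity, and then invoke Krylov--Safonov/Evans--Krylov plus Schauder to bootstrap and extend past $T^{*}$. The paper packages the continuation step by citing \cite[2.6.2]{cg:cp} directly, whereas you spell it out by hand; either is fine.

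One small point worth tightening: when you invoke \eqref{graphA} to bound $D^{2}u$, the covariant derivatives $u_{ij}$ there are taken with respect to the induced metric $g_{ij}$, not the fixed round metric $\sigma_{ij}$. To feed into the parabolic regularity theory you need second derivatives in a fixed chart. The paper handles this by rewriting the relation via the $\varphi$--substitution (Remark~\ref{phi}), obtaining formula \eqref{longtime1} in which the covariant derivatives are with respect to $\sigma_{ij}$, and then using that $v\le c$ makes $\sigma_{ij}$ and $\tilde g_{ij}$ uniformly equivalent. This conversion is straightforward (the Christoffel symbols of $g_{ij}$ relative to $\sigma_{ij}$ are controlled by $u$, $Du$ and $\vartheta(u)$, all bounded on finite time intervals), but you should say a word about it rather than citing \eqref{graphA} alone.
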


\begin{proof}
Following \cite[2.6.2]{cg:cp}, all we have to show is that we have a uniform $C^{2}(\S^{n})$ estimate on finite intervals, since we have already shown the uniform ellipticity on such intervals. There holds
\eq h^{i}_{j}=-v^{-1}\t^{-1}\~{g}^{ik}\p_{kj}+v^{-1}\frac{\dot{\t}}{\t}\d^{i}_{j}, \eeq
where $\~{g}^{ik}=\sigma^{ik}-v^{-2}\p^{i}\p^{k}.$ We have 
\eq \p_{j}=\t^{-1}u_{j} \eeq and \eq \p_{jk}=-\t^{-2}\dot{\t}u_{j}u_{k}+\t^{-1}u_{jk},\eeq
where covariant derivatives are taken with respect to $\sigma_{ij}.$
Thus \begin{align}\begin{split} \label{longtime1}
h^{i}_{j}&=\frac{\dot{\t}}{v\t}\d^{i}_{j}+v^{-1}\t^{-3}\dot{\t}\~{g}^{ik}u_{j}u_{k}-v^{-1}\t^{-2}\~{g}^{ik}u_{jk}\\
   	&=\frac{\dot{\t}}{v\t}\d^{i}_{j}+\frac{\dot{\t}}{v^{3}\t^{3}}u^{i}u_{j}-\frac{\~{g}^{ik}}{v\t^{2}}u_{kj}, \end{split}\end{align}
where $u^{i}=\sigma^{ik}u_{k}.$ Since $v\leq c,$ $\sigma_{ik}$ and $\~{g}_{ik}$ generate equivalent norms. All the other tensors are bounded in finite time and thus
\eq \abs{u}_{2,\S^{n}}\leq c=c(n,p,M_{0},T^{*}). \eeq
Then, using Krylov-Safonov, \cite{KrylovSaf}, \cite[Thm. 2.5.9]{cg:cp} and Remark \ref{shorttime} we conclude the result.
\end{proof}

\section{Decay estimates in $C^{1}$ and $C^{2}$}
\subsection*{Decay of the $C^{1}$-norm}

\begin{thmss} \label{GradDecay}
Under the hypotheses of Theorem \ref{mainthm}, for all $0<p\leq 1$ there exist constants $0<\lambda$ and $0<c$ depending on $n,p$ and $M_{0},$ such that
\eq v-1\leq ce^{-\lambda t}\ \ \forall t\in[0,\infty). \eeq
In case $p>1$ there exist constants $0<\e,\lambda,c,$ depending on $n,p$ and $M_{0},$ such that
\eq  \osc u(0,\cdot)<\e \Rightarrow\  v-1\leq ce^{-\lambda t}\ \ \forall t\in[0,\infty).\eeq
 \end{thmss}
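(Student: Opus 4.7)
The plan is to establish the exponential decay of $v-1$ via the parabolic maximum principle. The key observation is the identity
\[
v - 1 = \frac{|Du|^{2}}{v+1} = \frac{\omega}{(v+1)\,\sinh^{2} u},\qquad \omega := \sigma^{ij}u_{i}u_{j},
\]
where $\sigma$ is the round metric on $\S^{n}$ (so that $|Du|^{2}$ in the notation of Section~2 is the squared gradient of $u$ w.r.t.\ the induced hyperbolic metric on $\graph u$). Since Corollary~\ref{sphFlowGrowth} together with Lemma~\ref{oscBound} yields $\sinh u \geq c\,e^{t/n^{p}}$ with $c = c(n,p,M_{0})$, a uniform bound $\omega \leq c(n,p,M_{0})$ would immediately imply $v-1 \leq c'\,e^{-2t/n^{p}}$, giving the claimed decay with $\lambda = 2/n^{p}$.

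To bound $\omega$, I would compute $\dot\omega$ from the scalar flow $\dot u = F^{-p}v$ and apply the parabolic maximum principle at a spatial maximum $x_{t}\in\S^{n}$ of $\omega(t,\cdot)$. There the conditions $\omega_{j}(t,x_{t}) = 0$ and the nonpositivity of the spatial Hessian of $\omega$ at $x_{t}$ yield an ODE inequality for $\max_{\S^{n}}\omega$, whose coefficients are controlled by the uniform $C^{0}$-bound of Lemma~\ref{oscBound}, the $C^{1}$-bound of Lemma~\ref{GradBound} (or Lemma~\ref{vconvex} when $p>1$), and the uniform ellipticity and $C^{2}$-bound from Propositions~\ref{curvBound} and~\ref{kappaBound}. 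For $0 < p \leq 1$ the concavity of $F$ and the constant negative ambient curvature of $\H^{n+1}$ make the dominant contribution dissipative; in particular the $(p-1)$-term arising from differentiating $\Phi(F) = -F^{-p}$ is nonpositive, and the max-principle scheme closes.

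For $p > 1$ the positive $(p-1)$-contribution must be dominated by the negative geometric terms. The $C^{0}$-pinching $\osc u(0,\cdot) < \e$ enters here: via Lemma~\ref{oscBound} it keeps $\osc u(t,\cdot) \leq c\e$ for all $t$, and via the convexity-based estimate~(\ref{convexitygradbound}) from Lemma~\ref{vconvex} it keeps $v$ within $O(\e)$ of $1$. Consequently the principal curvatures stay within $O(\e)$ of the spherical values $\coth u$ uniformly in time, and for $\e = \e(n,p,M_{0})$ sufficiently small the positive $(p-1)$-contribution is absorbed into the negative geometric term and the scheme closes as in the $p\leq 1$ case.

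The main obstacle is the case $p > 1$: the positive $(p-1)$-contribution in the evolution of $\omega$ demands that the smallness of $\e$ be carefully propagated uniformly in time through the various $C^{0}$, $C^{1}$, $C^{2}$-estimates, and that the resulting margin be explicit enough to absorb it. Once the $\sigma$-gradient bound $\omega \leq c$ is in place, the conversion to $v-1 \leq c'\,e^{-2t/n^{p}}$ via the displayed identity and Corollary~\ref{sphFlowGrowth} is immediate.
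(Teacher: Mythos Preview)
Your strategy has a genuine gap in the $p>1$ case, and even for $p\le 1$ it overshoots what the theorem requires in a way that is not obviously achievable at this stage.

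\textbf{The $p>1$ gap.} You assert that the $C^{0}$-pinching ``keeps the principal curvatures within $O(\epsilon)$ of the spherical values $\coth u$ uniformly in time.'' This is unjustified: the pinching together with the convexity bound \eqref{convexitygradbound} controls $v$, but $v$ close to $1$ does \emph{not} force $h^{i}_{j}$ close to $\coth u\,\delta^{i}_{j}$. From \eqref{phiA} one sees that $h^{i}_{j}-v^{-1}\coth u\,\delta^{i}_{j}$ involves the Hessian $\varphi_{jk}$, which is not controlled by $|D\varphi|$ alone (think of small-amplitude, high-frequency graphs). Curvature closeness to umbilic is only established later (Lemma~\ref{kappaconv} and Theorem~\ref{kappaDecay}), and those results \emph{use} Theorem~\ref{GradDecay} as input. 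Without this closeness, the positive $(p-1)$-term in your $\omega$-evolution has no reason to be absorbed.

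\textbf{What the paper does instead.} The paper avoids any curvature-closeness requirement by working with the evolution of $v$ itself (imported from \cite[(5.28)]{cg:spaceform}). Setting $w=(v-1)e^{\lambda t}$, the decisive contribution at a maximum point is the explicit combination
\[
\frac{\bar H}{n}\,F^{-p}\bigl(p(v-1)-(v+1)\bigr)\,w,
\]
whose sign depends on $v$ alone: it is negative as soon as $v<\frac{p+1}{p-1}$. For $p\le 1$ this holds automatically; for $p>1$ it is exactly what the pinching guarantees via $v\le e^{\bar\kappa\,\osc u}$ and Lemma~\ref{oscBound}. No statement about $h^{i}_{j}$ being near umbilic is needed. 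The remaining terms in the $v$-evolution are $O(e^{-2t/n^{p}})$ by Corollary~\ref{thetaGrowth} and are harmless once $\lambda<2/n^{p}$.

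\textbf{On the $\omega$-approach for $p\le1$.} Bounding $\omega=\sigma^{ij}u_{i}u_{j}$ uniformly is equivalent to $|D\varphi|^{2}\le c\,\vartheta^{-2}$, i.e.\ to the \emph{optimal} rate $\lambda=2/n^{p}$. The paper obtains this only in Theorem~\ref{optGradDecay}, \emph{after} Theorem~\ref{GradDecay} and the curvature asymptotics of Theorem~\ref{kappaDecay}; the proof there uses $|F-n|\le ce^{-\lambda t/2}$, which is not available to you here. The estimate you can extract directly from the computation in Lemma~\ref{GradBound} is only $|D\varphi|^{2}\le ce^{-\delta_{0}t}$ for some $\delta_{0}>0$ depending on the a priori $F$-bounds of Proposition~\ref{curvBound}; there is no reason this $\delta_{0}$ matches $2/n^{p}$, so a uniform bound on $\omega$ does not follow. (Of course $v-1=|D\varphi|^{2}/(v+1)$ shows that any $\delta_{0}>0$ already proves the theorem in the case $p\le 1$ --- so your detour through $\omega$ is unnecessary there --- but your proposal claims more than it delivers, and for $p>1$ the gap above remains.)
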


\begin{proof}
Considering the equation for $v,$ cf. \cite[(5.28)]{cg:spaceform} and, using \eq\frac{\dot{\-{H}}}{n}=1-\frac{\-{H}^{2}}{n^{2}},\eeq we obtain
\begin{align}\begin{split}
\dot{v}-\dot{\Phi}F^{ij}v_{ij}&=-\dot{\Phi}F^{ij}h_{ik}h^{k}_{j}v-2v^{-1}\dot{\Phi}F^{ij}v_{i}v_{j}+2\dot{\Phi}F^{ij}v_{i}u_{j}\frac{\-{H}}{n}\\
					&\hphantom{=}-\dot{\Phi}F^{ij}g_{ij}\frac{\-{H}^{2}}{n^{2}}v-\dot{\Phi}F^{ij}u_{i}u_{j}v+\dot{\Phi}F^{ij}u_{i}u_{j}\frac{\-{H}^{2}}{n^{2}}v\\
					&\hphantom{=}+\frac{\-{H}}{n}(v^{2}-1)(\Phi-\dot{\Phi}F)+2\dot{\Phi}F\frac{\-{H}}{n}v^{2}\\
					&=-\dot{\Phi}F^{ij}(h_{ik}h^{k}_{j}-2h_{ij}+g_{ij})v-2v^{-1}\dot{\Phi}F^{ij}v_{i}v_{j}\\
					&\hphantom{=}+2\dot{\Phi}F^{ij}v_{i}u_{j}\frac{\-{H}}{n}-\dot{\Phi}F^{ij}g_{ij}\left(\frac{\-{H}^{2}}{n^{2}}-1\right)v\\
					&\hphantom{=}+\dot{\Phi}F^{ij}u_{i}u_{j}\left(\frac{\-{H}^{2}}{n^{2}}-1\right)v+\frac{\-{H}}{n}(v^{2}-1)\Phi\\
					&\hphantom{=}+\dot{\Phi}F\frac{\-{H}}{n}-2\dot{\Phi}Fv+\dot{\Phi}F\frac{\-{H}}{n}v^{2} \end{split}\end{align}

Let $\lambda>0$ and set \eq w=(v-1)e^{\lambda t}.\eeq
Fix $T>0$ and suppose \eq\sup_{[0,T]\times M}w=w(t_{0},\xi_{0})>1.\eeq Then at this point 
\begin{align}\begin{split} \label{GradDecay2}
0&\leq \dot{\Phi} F^{ij}u_{i}u_{j}\left(\frac{\-{H}^{2}}{n^{2}}-1\right)ve^{\lambda t}+\frac{\-{H}}{n}(v^{2}-1)\Phi e^{\lambda t}+\dot{\Phi} F\frac{\-{H}}{n}(v-1)^{2}e^{\lambda t}\\
  &\hphantom{=}+2\dot{\Phi} F\left(\frac{\-{H}}{n}-1\right)ve^{\lambda t}+\lambda w\\
  &=\dot{\Phi} F^{ij}u_{i}u_{j}\left(\frac{\-{H}^{2}}{n^{2}}-1\right)ve^{\lambda t}+2\dot{\Phi} F\left(\frac{\-{H}}{n}-1\right)ve^{\lambda t}\\
  &\hphantom{=}+\left(\frac{\-{H}}{n}F^{-p}(p(v-1)-(v+1))+\lambda\right)w.\\
  &\leq ce^{(\lambda-\frac{2}{n^{p}})t}+\left(\frac{\-{H}}{n}F^{-p}(p(v-1)-(v+1))+\lambda\right)w,
  \end{split}\end{align}		
  where the last estimate follows from the estimates of the curvature function, the principal curvatures and Corollary \ref{thetaGrowth}. The constant in this inequality depends on $n,p$ and $M_{0}.$\ \\
  
 % By \ref{vconvex} and \ref{phiDer} we have	
  %\eq v\leq c=c(n,p,M_{0})\leq \sup_{\frac 12\leq p\leq 2} c(n,p,M_{0})<\infty. \eeq
  %Let $\g>0$ be small and choose $p_{0}>1,$ such that for all $0<p\leq p_{0}$
  %\begin{align}\begin{split} \label{GradDecay1} -\g&\geq \frac{\-{H}}{n}F^{-p}((p-1)v-(p+1))\\
  %									&=\frac{\-{H}}{n}F^{-p}(p(v-1)-(v+1)). \end{split}\end{align}
Consider $p>1.$ % Proposition \ref{sphFlowBound} and Lemma \ref{oscBound} yield continuous dependence of
%\eq \sup_{t\in[0,\infty)}\osc u(t,\cdot) \eeq
%on \eq \osc u(0,\cdot) \eeq
%at $0.$\ \\
In view of (\ref{convexitygradbound}) we deduce

%By \cite[(2.7.83)]{cg:cp}, we especially have
\eq v\leq e^{\-{\k}\osc u},\eeq
where $\-{\k}$ is an upper bound for the curvatures of the slices, which in our case converge to $1,$ as $t\rightarrow \infty.$ 
Choosing $\b>0,$ such that
\eq \b<\frac{1}{\-{\k}}\log\frac{p+1}{p-1}\ \ \forall t\in[0,\infty), \eeq
there exists $\e>0,$ such that
\eq \osc u(0,\cdot)<\e \Rightarrow \sup_{t\in[0,\infty)}\osc u(t,\cdot)<\b,\eeq
due to the estimates (\ref{sphFlowBound1}) and (\ref{oscBound1}) and we conclude further
 \eq v\leq e^{\-{\k}\b}<\frac{p+1}{p-1}\ \ \forall t\in[0,\infty).  \eeq
Using \eq 0<c^{-1}\leq F\leq c,\  c=c(n,p,M_{0}) \eeq and \eq \frac{\bar{H}}{n}\geq 1,\eeq we obtain from
(\ref{GradDecay2})
\eq 0\leq ce^{(\lambda-\frac{2}{n^{p}}t)}+c((p-1)v-(p+1)+\lambda)w.\eeq 

%Thus we also find (\ref{GradDecay1}) for some $\g>0.$
%Now return to (\ref{GradDecay2}). In all of the cases we get at $(t,\xi_{t})$
%\eq 0\leq \dot{\Phi} F^{ij}u_{i}u_{j}\left(\frac{\-{H}^{2}}{n^{2}}-1\right)ve^{\lambda t}+2\dot{\Phi} F\left(\frac{\-{H}^{2}}{n^{2}}-1\right)ve^{\lambda t}+(\lambda-\gamma)w. \eeq
%Choose $\lambda<\min(\frac{2}{n^{p}},\g).$ Then for large $T>0$ it is not possible for $w$ to attain a maximal value at $(t,\xi_{t})\in(T,\infty)\times M$ with $w(t,\xi_{t})\geq 1.$ also cf. \ref{thetaGrowth}.
In this inequality the coefficient of the linear term is strictly negative in view of the previous considerations, if $\lambda(n,p,M_{0})$ is small, while the first term converges to $0,$ which leads to a contradiction, if $t_{0}$ is sufficiently large. Thus $w$ is bounded, completing the proof.

\end{proof}

\subsection*{Curvature asymptotics}

\begin{lemss} \label{Lipschitz}
Let $f\in C^{0,1}(\R_{+})$ and let $D$ be the set of points of differentiability of $f.$ Suppose that for all
$\e>0$ there exist $T_{\e}>0$ and $\d_{\e}>0,$ such that \eq \{t\in D \cap  [T_{\e},\infty)\colon f(t)\geq \e\}\subset \{t\in D \cap [T_{\e},\infty)\colon f'(t) <-\d_{\e}\}. \eeq
Then there holds
\eq \limsup\limits_{t\rightarrow\infty}f(t)\leq 0. \eeq 
\end{lemss}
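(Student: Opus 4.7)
The strategy is a clean argument by contradiction. Suppose $\limsup_{t\to\infty}f(t)>0$ and fix $\e>0$ with $2\e<\limsup_{t\to\infty}f(t)$. Apply the hypothesis to this $\e$ to produce $T_\e$ and $\d_\e$. The picture behind the hypothesis is that, past time $T_\e$, whenever $f$ sits in $[\e,\infty)$ it strictly decreases at rate at least $\d_\e$ at almost every point (the exceptional set is null because $f$ is Lipschitz). Consequently $f$ can neither remain above $\e$ forever nor, once it has fallen to or below $\e$, climb back above it.

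By definition of $\limsup$ there is $t_0\geq T_\e$ with $f(t_0)>2\e$. Put
\[
b=\inf\{s\geq t_0\colon f(s)\leq\e\},
\]
and note $b>t_0$ by continuity. On $[t_0,b)$ we have $f>\e$, so $f\geq\e$ on $[t_0,b_1]$ for every $b_1\in[t_0,b]$ (including $b_1=b$ by continuity, if $b<\infty$). Since $f\in C^{0,1}(\R_+)$ it is absolutely continuous, hence the fundamental theorem of calculus and the hypothesis yield
\[
f(b_1)-f(t_0)=\int_{t_0}^{b_1}f'(s)\,ds\leq-\d_\e(b_1-t_0)
\]
for every such $b_1$. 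If $b=\infty$ this would force $f(b_1)\to-\infty$, contradicting $f\geq\e$ on $[t_0,\infty)$; hence $b<\infty$ and $f(b)=\e$ by continuity.

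It remains to verify $f\leq\e$ on $[b,\infty)$, which will give $\limsup_{t\to\infty}f(t)\leq\e<\tfrac12\limsup_{t\to\infty}f(t)$, the desired contradiction. Suppose instead that $f(c)>\e$ for some $c>b$ and set $b'=\sup\{s\in[b,c]\colon f(s)\leq\e\}$. Then $b'<c$, $f(b')=\e$ and $f>\e$ on $(b',c]$, so the identical integration on $[b',c]$ produces
\[
f(c)-f(b')\leq-\d_\e(c-b')<0,
\]
contradicting $f(c)>\e=f(b')$. This completes the argument. I expect no real obstacle: the only technical point meriting explicit mention is the promotion of the pointwise derivative inequality to the integrated form, which is simply the absolute continuity of Lipschitz functions; everything else is continuity and elementary interval bookkeeping.
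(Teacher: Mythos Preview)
Your proof is correct and follows essentially the same strategy as the paper: argue by contradiction and integrate the bound $f'<-\d_\e$ over intervals on which $f\geq\e$, using the absolute continuity of Lipschitz functions. The paper organizes this via a case split on $\liminf f$ (building oscillating sequences $t_k<s_k$ in the second case), whereas you unify both cases through the times $b$ and $b'$; the underlying mechanism is identical.
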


\begin{proof}
Suppose first, that \eq \liminf\limits_{t\rightarrow\infty}f(t)\geq 2\e>0.\eeq Then there exists $\~{T}>0,$ such that
\eq f(t)>\e\ \ \forall t\geq\~{T} \eeq and hence, there exists $T_{\e}\geq \~{T}$ and $\d_{\e}>0,$ such that
\eq  f'(t)<-\d_{\e}\ \ \forall t\in D\cap[T_{\e},\infty) \eeq and we infer for all $t\geq T_{\e}$
\eq f(t)\leq f(T_{\e})+\int_{T_{\e}}^{t}(-\d_{\e})=f(T_{\e})-\d_{\e}(t-T_{\e}) \rightarrow -\infty, \eeq
as $t\rightarrow \infty,$ which is a contradiction.\ \\

Now suppose that
\eq \liminf\limits_{t\rightarrow\infty}f(t)\leq 0\ \wedge\ \limsup\limits_{t\rightarrow\infty}f(t)\geq 2\e>0. \eeq
Then there exist $(t_{k})_{k\in\N}$ and $(s_{k})_{k\in\N},$ such that
\begin{align}\begin{split} &t_{k}<s_{k},\\
		&t_{k}\rightarrow\infty, k\rightarrow\infty,\\
		&\frac{\e}{2}\leq f(t_{k})\leq \e,\\
		&f(s_{k})>\frac 32 \e,\\\
		&f_{|[t_{k},s_{k})}\geq \frac{\e}{2}. \end{split} \end{align}
Since $D\subset\R_{+}$ is dense and $f$ continuous, we may suppose that $t_{k},s_{k}\in D.$
Choose $T_{\frac{\e}{2}}>0$ and $\d_{\frac{\e}{2}}>0,$ such that
\eq  f(t)\geq\frac{\e}{2}\Rightarrow f'(t)<-\d_{\frac{\e}{2}}\ \ \forall t\in D \cap [T_{\frac{\e}{2}},\infty). \eeq We conclude
\eq f(s_{k})-f(t_{k})\leq -\int_{t_{k}}^{s_{k}}\d_{\frac{\e}{2}}=-\d_{\frac{\e}{2}}(s_{k}-t_{k})\ \ \forall t_{k}, s_{k}\geq T_{\frac{\e}{2}} , \eeq hence
\eq f(s_{k})<f(t_{k}), \eeq
which is a contradiction.
\end{proof}
\vspace{0,01 cm}

\begin{lemss} \label{kappaconv}
Under the hypotheses of Theorem \ref{mainthm} the principal curvatures of the flow hypersurfaces converge to $1,$
\eq \sup\limits_{M}\abs{\k_{i}(t,\cdot)-1}\rightarrow 0,\ t\rightarrow \infty,\ \ \forall 1\leq i\leq n. \eeq
%In fact \eq \forall \e>0\ \exists T=T(\e,n,p,M_{0})\ \forall t\geq T\colon \sup\limits_{M}\abs{\k_{i}(t,\cdot)-1}<\e. \eeq
\end{lemss}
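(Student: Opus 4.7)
The plan is to invoke Lemma~\ref{Lipschitz} twice, applied to the functions
\[ f_+(t) := \sup_{\xi\in M}\bigl(\kappa_n(t,\xi)-1\bigr) \quad\text{and}\quad f_-(t) := \sup_{\xi\in M}\bigl(1-\kappa_1(t,\xi)\bigr). \]
Lipschitz continuity of $f_\pm$ in $t$ follows from the argument recalled in the proof of Proposition~\ref{kappaBound} (compare \cite[Lemma 6.3.2]{cg:cp}). A successful application of Lemma~\ref{Lipschitz} yields $\limsup_{t\to\infty}f_\pm(t)\le 0$, i.e.\ $\sup_M\kappa_n\to 1$ and $\inf_M\kappa_1\to 1$; the pointwise ordering $\kappa_1\le\kappa_i\le\kappa_n$ then upgrades this to the uniform convergence $\sup_M|\kappa_i-1|\to 0$ for every $1\le i\le n$, which is the claim.

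To verify the hypothesis of Lemma~\ref{Lipschitz} for $f_+$, fix $\epsilon>0$ and consider a differentiability point $t_0$ of $f_+$ with $f_+(t_0)\ge\epsilon$. Pick a spatial maximiser $\xi_0\in M$ and choose Riemannian normal coordinates at $(t_0,\xi_0)$ diagonalising $h^i_j$ with $\kappa_1\le\cdots\le\kappa_n$; the smoothing trick from Proposition~\ref{kappaBound} lets us treat $h^n_n$ as a scalar and apply the evolution equation of $h^i_j$ recalled there. At the spatial maximum the Hessian term $-\dot{\Phi}F^{kl}h^n_{n;kl}$ is non-positive, concavity of $F$ forces $\dot{\Phi}F^{kl,rs}h_{kl;n}h^{\;;n}_{rs}\le 0$, and $\ddot\Phi=-p(p+1)F^{-p-2}<0$ gives $\ddot\Phi F_n F^n\le 0$. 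Substituting $\Phi=-F^{-p}$ and $\dot\Phi=pF^{-p-1}$ therefore reduces $\dot\kappa_n$ at $(t_0,\xi_0)$ to a pointwise expression in the zeroth-order quantities $F$, the $F^{ii}$, the $\kappa_i$, $v$ and $\bar H=n\coth u$.

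The hard part, and the main obstacle, is to bound this pointwise expression by $-\delta_\epsilon<0$ once $t_0\ge T_\epsilon$. Here I would feed in the exponential decays $v\to 1$ from Theorem~\ref{GradDecay} and $\coth u\to 1$ from Corollary~\ref{thetaGrowth}, the uniform containment of the principal curvatures in a fixed compact set $K\subset\Gamma$ from Proposition~\ref{kappaBound}, and the monotonicity, homogeneity and concavity of $F$ together with the normalisation $F(1,\dots,1)=n$ (which also forces $F^{ii}(1,\dots,1)=1$ by symmetry and Euler's relation). Heuristically, in the regime where $v$ and $\coth u$ are close to $1$, the constraint $\kappa_n\ge 1+\epsilon$ together with $\kappa_i\in K$ makes the negative contribution $-(p+1)F^{-p}\kappa_n^2$ dominate the remaining terms, producing a strictly negative bound $-\delta_\epsilon$ that depends only on $\epsilon,n,p,M_0$ but not on $\xi_0$. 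The parallel argument at a spatial minimiser of $\kappa_1$ gives the corresponding hypothesis for $f_-$, and a double application of Lemma~\ref{Lipschitz} then closes the proof.
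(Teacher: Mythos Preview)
Your plan has two genuine gaps.

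\emph{For $f_+$.} The ``domination'' heuristic is incorrect. After dropping the second--order terms at a spatial maximiser of $\kappa_n$, the evolution inequality reads
\[
\dot\kappa_n\;\le\;pF^{-p-1}\kappa_n\sum_kF^{kk}(\kappa_k^2+1)\;-\;(p+1)F^{-p}\kappa_n^2\;-\;(p-1)F^{-p},
\]
and the right--hand side need not be negative on $\{\kappa\in K:\kappa_n\ge 1+\epsilon\}$. For instance, with $n=2$, $F=H$, $p=1$ and $(\kappa_1,\kappa_2)=(0.5,\,1.01)$ one computes $H^{-2}\kappa_2(|\kappa|^2+2-2H\kappa_2)\approx 0.097>0$. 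Since Proposition~\ref{kappaBound} places no further restriction on $\kappa_1$ at the spatial maximiser of $\kappa_n$, you cannot conclude $f_+'(t_0)\le-\delta_\epsilon$.

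\emph{For $f_-$.} At a spatial minimiser of $\kappa_1$ you need a \emph{lower} bound on $\dot\kappa_1$, but the terms $\ddot\Phi\,F_1F^1\le0$ and $\dot\Phi\,F^{kl,rs}h_{kl;1}{h_{rs;}}^{1}\le0$ now have the wrong sign to be discarded, and there is no a~priori control on the derivatives of the second fundamental form at this stage of the argument.

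The paper circumvents both issues by coupling the curvature quantity with auxiliary functions whose evolution equations cancel the offending zeroth--order terms. For the upper bound it studies $w=(\log\zeta+\log\~{\chi}+\~{u}-\log 2)\,t$: the equations for $\log\~{\chi}$ and $\~{u}$ absorb $\dot\Phi F^{kl}h_{kr}h^r_l$ and $\dot\Phi F^{kl}g_{kl}$, leaving at the maximum $0\le\Phi(\kappa_n-\kappa_n^{-1})t_0+c$, which forces $\kappa_n\le 1+O(1/t_0)$. For the lower bound the paper does \emph{not} work with $\kappa_1$ directly; it proves $F\to n$ by applying Lemma~\ref{Lipschitz} to $z=\log(-\Phi)+\log\~{\chi}+\~{u}-\log 2-\log\frac{1}{n^{p}}$ (the evolution of $\Phi$ contains no Hessian--of--$h$ terms, so the sign problem disappears), and then deduces $\kappa_1\to1$ from $F\to n$, $\limsup\kappa_i\le 1$, and the monotonicity of $F$.
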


\begin{proof}
(i) As in the proof of Proposition \ref{kappaBound} we consider the function
\eq \zeta=\sup\{h_{ij}\eta^{i}\eta^{j}\colon \norm{\eta}^{2}=g_{ij}\eta^{i}\eta^{j}=1\}. \eeq
Set \eq w=(\log \zeta+\log \~{\chi}+\~{u}-\log 2)t, \eeq
where \eq \~{\chi}=\chi e^{\frac{t}{n^{p}}}=\frac{v}{\sinh u}e^{\frac{t}{n^{p}}},\ \~{u}=u-\frac{t}{n^{p}}. \eeq
Fix $0<T<\infty$ and suppose \eq \sup_{[0,T]\times M}w=w(t_{0},\xi_{0}),\ t_{0}>0.\eeq
As in the proof of Proposition \ref{kappaBound}, we choose coordinates such that in $(t_{0},\xi_{0})$ there holds $g_{ij}=\d_{ij},$ $h_{ij}=\k_{i}\d_{ij}$ and
\eq w=(\log h^{n}_{n}+\log \~{\chi}+\~{u}-\log 2)t. \eeq
 First note, that \begin{align}\begin{split} (\log \~{\chi}+\~{u}-\log 2)t&=\left(\log v-\log(\sinh u)+\frac{t}{n^{p}}+u-\frac{t}{n^{p}}-\log 2\right)t \\
												&=\left(\log v-\log \frac 12(e^{u}-e^{-u})+u-\log 2\right)t\\
												&=(\log v-\log (e^{u}-e^{-u})+u)t \end{split}\end{align}
is bounded. To prove this claim, note that \eq t\log v=\log(1+v-1)^{t}\leq \log(1+ce^{-\lambda t})^{t}, \eeq which follows from Theorem \ref{GradDecay}. Furthermore
\eq e^{t(u-\log(e^{u}-e^{-u}))}=\left( \frac{e^{u}}{e^{u}-e^{-u}}\right)^{t}=(1-e^{-2u})^{-t}\leq (1-e^{c-\frac{t}{n^{p}}})^{-t},\eeq following from Corollary \ref{sphFlowGrowth} and Lemma \ref{oscBound}. But for large $t$ we have \eq e^{-\lambda t}\leq \frac{c}{t} \eeq and thus \eq (1+ce^{-\lambda t})^{t}\leq (1+\frac{c}{t})^{t}\leq \const. \eeq The term \eq t(u-\log(e^{u}-e^{-u}))\eeq is bounded for the same reason.
Using the equations for $h^{n}_{n},$ $\~{\chi}$ and $\~{u},$ cf. Proposition \ref{kappaBound},  we obtain
\begin{align} \begin{split}
\dot{w}-\dot{\Phi}F^{ij}w_{ij}&=\Big((\Phi-\dot{\Phi}F)h^{kn}h_{kn}(h^{n}_{n})^{-1}+\ddot{\Phi}F_{n}F^{n}(h^{n}_{n})^{-1}\\
					&\hphantom{=}+\dot{\Phi}F^{kl,rs}h_{kl;n}{h_{rs;}}^{n}(h^{n}_{n})^{-1}-(\Phi+\dot{\Phi}F)(h^{n}_{n})^{-1}\\ 
					&\hphantom{=}+\dot{\Phi}F^{kl}g_{kl}+\dot{\Phi}F^{kl}(\log h^{n}_{n})_{k}(\log h^{n}_{n})_{l}\\
					&\hphantom{=}-\dot{\Phi}F^{kl}(\log\~{\chi})_{k}(\log\~{\chi})_{l}+(\dot{\Phi}F+\Phi)\frac{\-{H}}{n}v\\
					&\hphantom{=}+(\dot{\Phi}F-\Phi)v^{-1}-\dot{\Phi}F^{ij}\-{h}_{ij}\Big)t_{0}\\
					&\hphantom{=}+(\log h^{n}_{n}+\log\~{\chi}+\~{u}-\log 2)\\
					&\leq \Phi\left(h^{n}_{n}-(h^{n}_{n})^{-1}-v^{-1}+\frac{\-{H}}{n}v\right)t_{0}\\
					&\hphantom{=}+\dot{\Phi}F\left(\frac{\-{H}}{n}v+v^{-1}-(h^{n}_{n}+(h^{n}_{n})^{-1})\right)t_{0} \\
					&\hphantom{=}+\dot{\Phi}F^{kl}\-{g}_{kl}(1-\coth u)t_{0}+\dot{\Phi}F^{kl}u_{k}u_{l}t_{0}\\
					&\hphantom{=}+\dot{\Phi}F^{kl}((\log h^{n}_{n})_{k}(\log h^{n}_{n})_{l}-(\log\~{\chi})_{k}(\log\~{\chi})_{l})t_{0}\\
					&\hphantom{=}+\log h^{n}_{n}+\log\~{\chi}+\~{u}-\log 2.
					 \end{split}\end{align}
At $(t_{0},\xi_{0})$ we have
\eq (\log h^{n}_{n})_{k}=-(\log \~{\chi})_{k}-u_{k} \eeq
and thus
\begin{align}\begin{split}\label{kappaconv2}
0&\leq \Phi\left(h^{n}_{n}-(h^{n}_{n})^{-1}-v^{-1}+\frac{\-{H}}{n}v\right)t_{0}\\
 &\hphantom{=}+\dot{\Phi}F\left(\frac{\-{H}}{n}v+v^{-1}-(h^{n}_{n}+(h^{n}_{n})^{-1})\right)t_{0}\\
		&\hphantom{=}+\log h^{n}_{n}+\log \~{\chi}+\~{u}-\log 2+2\dot{\Phi}F^{kl}u_{k}u_{l}t_{0}+2\dot{\Phi}F^{kl}(\log\~{\chi})_{k}u_{l}t_{0} \end{split}\end{align}		
We have \eq (\log\~{\chi})_{k}=\frac{\chi_{k}}{\chi}=\frac{\sinh u}{v}\frac{v_{k}\sinh u-vu_{k}\cosh u }{\sinh^{2}u}\rightarrow 0, \eeq
since \eq v_{k}=-v^{2}h^{i}_{k}u_{i}+v\frac{\-{H}}{n}u_{k},\eeq the principal curvatures are bounded by Proposition \ref{kappaBound} and $|Du|^{2}\rightarrow 0$ by Theorem \ref{GradDecay}.
In view of \eq x+x^{-1}\geq 2\ \ \forall x>0 \eeq
and by Theorem \ref{GradDecay} we have in $(t_{0},\xi_{0}):$
\eq 0\leq \Phi(h^{n}_{n}-(h^{n}_{n})^{-1})t_{0}+c \eeq
for some $c=c(n,p,M_{0}),$ which implies
\eq h^{n}_{n}-(h^{n}_{n})^{-1}\leq \frac{cF^{p}}{t_{0}}.\eeq
Thus we find
\begin{align}\begin{split}
w&\leq t_{0}\log\left(1+\frac{cF^{p}}{t_{0}}\right)+t_{0}(\log\~{\chi}+\~{u}-\log 2)\\
	&\leq c=c(n,p,M_{0}). \end{split}\end{align}
Hence $w$ is a priori bounded and thus
\eq \limsup\limits_{t\rightarrow\infty}\sup\limits_{M} \k_{i}(t,\cdot)\leq 1\ \ \forall 1\leq i\leq n. \eeq

(ii) Now we investigate the function
\eq z=\log(-\Phi)+\log\~{\chi}+\~{u}-\log 2-\log\frac{1}{n^{p}}\eeq
and show that \eq \limsup\limits_{t\rightarrow\infty}\sup\limits_{M} z(t,\cdot)\leq 0.\eeq
The Lipschitz function \eq \~{z}=\sup\limits_{\xi\in M}z(\cdot,\xi) \eeq satisfies for almost every $t\geq 0$
\begin{align}\begin{split} \dot{\~{z}}&\leq \dot{\Phi}F^{kl}((\log(-\Phi))_{k}(\log(-\Phi))_{l}-(\log\~{\chi})_{k}(\log\~{\chi})_{l})\\
							&\hphantom{=}+\Phi\left(\frac{\-{H}}{n}v-v^{-1}\right)+\dot{\Phi}\left(F\frac{\-{H}}{n}v-F^{kl}\-{h}_{kl}\right)\\
							&\hphantom{=}+\dot{\Phi}(Fv^{-1}-F^{kl}g_{kl})\\
							&\leq o(1)+\dot{\Phi}\left(F\frac{\-{H}}{n}v+Fv^{-1}-2F^{kl}g_{kl}\right). \end{split}\end{align}
\textbf{Claim}:
$ \forall\e>0\ \exists T>0\ \exists\d>0\colon$  $$A_{\e}=\{t\in[T,\infty) \cap  D\colon\~{z}(t)>\e\}\subset\{t\in[T,\infty) \cap D\colon \dot{\~{z}}(t)\leq -\d\}, $$ where $D$ is the set of points of differentiability of $\~{z}.$
\ \\

To prove this claim, let $\e>0$ and choose $T>0$ such that \eq \log\~{\chi}+\~{u}-\log 2<\frac{\e}{2}\ \ \forall (t,\xi)\in[T,\infty)\times M. \eeq
Then for $t\in A_{\e}$ we have
\eq \left(\log(-\Phi)-\log\frac{1}{n^{p}}\right)(t,\xi_{t})>\frac{\e}{2}, \eeq
where $\~{z}(t)=z(t,\xi_{t}).$
Thus there exists $0<\g=\g(\e),$ such that \eq F(t,\xi_{t})<n-\g,\eeq
 implying \eq F\frac{\-{H}}{n}v+Fv^{-1}-2F^{kl}g_{kl}\leq \-{H}v-n-\frac{\-{H}}{n}v\g.\eeq
 One may enlarge $T$, such that
 \eq  \abs{o(1)+\dot{\Phi}(\-{H}v-n)}\leq \frac{(\inf \dot{\Phi})\g}{2}\ \ \forall (t,\xi)\in[T,\infty)\times M. \eeq
Thus \eq \dot{\~{z}}(t)\leq -(\inf \dot{\Phi})\frac{\g}{2}=:-\d. \eeq
Now it follows from Lemma \ref{Lipschitz}, that \eq \limsup\limits_{t\rightarrow\infty}\~{z}(t)\leq 0.\eeq
Thus
\begin{align}\begin{split}
 &\limsup\limits_{t\rightarrow\infty}\sup\limits_{M} \log(-\Phi)-\log\frac{1}{n^{p}}=\limsup\limits_{t\rightarrow\infty}\sup\limits_{M} (z-\log\~{\chi}-\~{u}+\log 2)\\
    		&\hphantom{=}\leq \limsup\limits_{t\rightarrow\infty}\~{z}+\limsup\limits_{t\rightarrow\infty}\sup\limits_{M}(-\log\~{\chi}-\~{u}+\log 2)\\
		&\hphantom{=}\leq 0, \end{split}\end{align}
implying
\begin{align}\begin{split}
\frac{1}{n^{p}}&\geq \limsup\limits_{t\rightarrow\infty}\sup\limits_{M}\frac{1}{F^{p}}=\limsup\limits_{t\rightarrow\infty}(\inf\limits_{M}F^{p})^{-1}\\
			&=(\liminf\limits_{t\rightarrow\infty}\inf\limits_{M}F^{p})^{-1}.\end{split}\end{align}
			This leads to
\eq \liminf\limits_{t\rightarrow\infty}\inf\limits_{M}F^{p}\geq n^{p}\eeq
%\eq \Rightarrow \ \liminf\limits_{t\rightarrow\infty}\inf\limits_{M}F\geq n.\eeq
Together with (i) we obtain
\eq \sup\limits_{M}\abs{F-n}\rightarrow 0.\eeq
Now suppose there was a sequence $(t_{k},\xi_{k})$ such that for the smallest eigenvalue we had
\eq \k_{1}(t_{k},\xi_{k})\rightarrow\d<1.\eeq
Then\begin{align}\begin{split}
\limsup\limits_{k\rightarrow\infty}F(\k_{1},\ldots,\k_{n})-n&=\limsup\limits_{k\rightarrow\infty}\sum_{i=1}^{n}\frac{\del F}{\del\k_{i}}(\~{\k}_{k})(\k^{i}-1)\\
											&\leq \limsup\limits_{k\rightarrow\infty}F^{1}(\d-1)<0, \end{split}\end{align}
				which is a contradiction.
\end{proof}

\subsection*{Optimal rates of convergence}

We now derive the optimal speed of convergence of the second fundamental form to $\d^{i}_{j},$ which, of course, can not be better than what we expect from the spherical flow, i.e.
\begin{align}\begin{split} \abs{\-{h}_{ij}-\-{g}_{ij}}&=\abs{\coth\Theta-1}\abs{\delta^{i}_{j}}\\
									&\leq c\left|\frac{\cosh\Theta-\sinh\Theta}{\sinh\Theta}\right|\\
									&\leq c\frac{e^{-\Theta}}{e^{\Theta}-e^{-\Theta}}\\
									&\leq ce^{-2\Theta}\leq ce^{-\frac{2}{n^{p}}t}. \end{split}\end{align} 

\begin{thmss} \label{kappaDecay}
The principal curvatures of the flow hypersurfaces of (\ref{floweq}) converge to $1$ exponentially fast. There exists $c=c(n,p,M_{0}),$ such that
\eq \abs{h^{i}_{j}-\d^{i}_{j}}\leq ce^{-\frac{2}{n^{p}}t}\ \ \forall(t,\xi)\in[0,\infty)\times M.\eeq
\end{thmss}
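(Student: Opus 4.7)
The expected decay rate $\frac{2}{n^{p}}$ is exactly that of $\coth u-1$, i.e.\ of the curvature defect of the geodesic sphere barriers (Corollary \ref{thetaGrowth}), so this is the sharp rate allowed by the spherical flows. The plan is to strengthen the two qualitative convergences of Lemma \ref{kappaconv} (namely $\limsup\sup\k_{i}\leq 1$ and $\liminf\inf F\geq n$) to quantitative exponential decay estimates, by rerunning the underlying maximum-principle arguments with the linear time weight $t$ replaced by the exponential weight $e^{\frac{2}{n^{p}}t}$.

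For the upper bound on the largest principal curvature $\zeta$, I would consider
\[ W=\bigl(\log\zeta+\log\~{\chi}+\~{u}-\log 2\bigr)\,e^{\frac{2}{n^{p}}t} \]
and show that $W$ is bounded above on $[0,\infty)\times M$. At an interior positive maximum $(t_{0},\xi_{0})$, the evolution equations for $h^{n}_{n}$, $\~{\chi}$ and $\~{u}$ are combined exactly as in the proof of Lemma \ref{kappaconv}; differentiating the exponential weight introduces the new term $\tfrac{2}{n^{p}}W$. Using the Codazzi identity $(\log h^{n}_{n})_{k}=-(\log\~{\chi})_{k}-u_{k}$, the inequality $x+x^{-1}\geq 2$, the bounds of Propositions \ref{curvBound}--\ref{kappaBound} and the exponential decay of $\log\~{\chi}+\~{u}-\log 2$ (which follows from Theorem \ref{GradDecay}, Corollary \ref{thetaGrowth} and the identity $\log\~{\chi}+\~{u}-\log 2=\log v-\log(1-e^{-2u})$), the estimate can be closed to give $W\leq c$, whence $\zeta-1\leq ce^{-\frac{2}{n^{p}}t}$. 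The lower bound on $F$ is obtained by the analogous argument applied to
\[ Z=\bigl(\log(-\Phi)+\log n^{p}+\log\~{\chi}+\~{u}-\log 2\bigr)\,e^{\frac{2}{n^{p}}t}, \]
following part (ii) of Lemma \ref{kappaconv} together with Lemma \ref{Lipschitz} in quantitative form: the strictly negative term $\dot{\Phi}(F\tfrac{\-{H}}{n}v+Fv^{-1}-2F^{kl}g_{kl})$ dominates the new $\tfrac{2}{n^{p}}Z$ perturbation whenever $F$ is bounded away from $n$, yielding $F-n\geq -ce^{-\frac{2}{n^{p}}t}$.

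The final step is to pass from these two one-sided bounds to a two-sided decay estimate on every individual principal curvature. Since $F$ is $1$-homogeneous, symmetric and concave with $F(1,\ldots,1)=n$, Euler's relation and symmetry give $F_{i}(1,\ldots,1)=1$ for every $i$, so the tangent-plane inequality at $(1,\ldots,1)$ reads $F(\k)\leq\sum_{i}\k_{i}=\-{H}$ on $\Gamma$. Combined with the lower bound on $F$ we get $\-{H}-n\geq -ce^{-\frac{2}{n^{p}}t}$, and together with the upper bound $\k_{i}-1\leq ce^{-\frac{2}{n^{p}}t}$ from the first step we conclude for every $j$ that
\[ \k_{j}-1=(\-{H}-n)-\sum_{i\neq j}(\k_{i}-1)\geq -nce^{-\frac{2}{n^{p}}t}, \]
which is the required lower bound on every principal curvature.

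The main technical obstacle is that the rate $\frac{2}{n^{p}}$ is \emph{critical} for the weighted maximum-principle estimate: at the maximum of $W$ the leading-order good term $\Phi(h^{n}_{n}-(h^{n}_{n})^{-1})e^{\frac{2}{n^{p}}t}\simeq -\tfrac{2}{n^{p}}(\k_{n}-1)(1+\k_{n}^{-1})e^{\frac{2}{n^{p}}t}$ exactly cancels the new $\tfrac{2}{n^{p}}W$ contribution at leading order near $\zeta=1$. Closing the estimate therefore requires careful book-keeping of sub-leading terms, making simultaneous use of the strict concavity $\ddot{\Phi}<0$ of the speed function, the quadratic correction $(\k_{n}-1)^{2}$, and the exponential decay of $v-1$ and $\coth u-1$ at the right matching rate.
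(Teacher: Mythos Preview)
Your approach differs substantially from the paper's, and there is a genuine gap. You attempt to reach the sharp rate $\frac{2}{n^{p}}$ in a single step by attaching the weight $e^{\frac{2}{n^{p}}t}$ to the quantities from Lemma \ref{kappaconv}. This runs into two connected problems. First, you invoke ``the exponential decay of $v-1$ \ldots\ at the right matching rate'', but at this point only the weak bound $v-1\leq ce^{-\lambda t}$ for some small $\lambda>0$ from Theorem \ref{GradDecay} is available; the optimal rate for $v-1$ is Theorem \ref{optGradDecay}, which in the paper is proved \emph{after} Theorem \ref{kappaDecay} and uses it. Consequently the auxiliary piece $(\log\~{\chi}+\~{u}-\log 2)e^{\frac{2}{n^{p}}t}=(\log v-\log(1-e^{-2u}))e^{\frac{2}{n^{p}}t}$ in your $W$ need not be bounded, and the error terms of the form $\dot{\Phi}F^{kl}u_{k}u_{l}\,e^{\frac{2}{n^{p}}t}$ arising in the evolution inequality may blow up. Second, even granting control of those errors, you correctly note that at the critical rate the leading good term $\Phi(h^{n}_{n}-(h^{n}_{n})^{-1})e^{\frac{2}{n^{p}}t}$ exactly cancels $\frac{2}{n^{p}}W$; the sub-leading corrections you list ($\ddot{\Phi}<0$, the quadratic $(\k_{n}-1)^{2}$) are $o(W)$ and do not by themselves close the inequality.

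The paper avoids both issues by a two-step bootstrap applied to the single scalar
\[
G=\tfrac{1}{2}\abs{h^{i}_{j}-\d^{i}_{j}}^{2}e^{\lambda t}.
\]
In step (i) one takes $\lambda>0$ small: the evolution of $h^{i}_{j}$ produces the coefficient $4\Phi+\lambda$, which is strictly negative, and the remaining (cubic, gradient) terms are absorbed using only the qualitative convergence $\abs{h^{i}_{j}-\d^{i}_{j}}\to 0$ from Lemma \ref{kappaconv}. This yields $\abs{h^{i}_{j}-\d^{i}_{j}}\leq ce^{-\frac{\lambda}{2}t}$ for some small $\lambda$. In step (ii) one sets $\lambda=\frac{4}{n^{p}}$: now the coefficient is $-4F^{-p}+\frac{4}{n^{p}}$, which by step (i) and Lipschitz continuity of $\Phi$ is $\leq ce^{-\frac{\lambda}{2}t}$, and the resulting inequality $\dot{\~{G}}\leq ce^{-\frac{\lambda}{2}t}\~{G}$ integrates to a bound on $\~{G}$. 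This two-stage structure is exactly what lets one reach the critical rate without the optimal gradient decay in hand; your one-step argument would have to be restructured along these lines to work. Incidentally, working directly with $\abs{h^{i}_{j}-\d^{i}_{j}}^{2}$ also makes your third step (deducing the lower bound on $\k_{j}$ from $F\leq H$) unnecessary.
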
									

\begin{proof}
Also compare \cite[Thm. 5.1]{ding:hyperbolic}, where the author uses the same function $G.$\\

(i) Define \eq G=\frac 12 \abs{h^{i}_{j}-\d^{i}_{j}}^{2}e^{\lambda t}=\frac 12(h^{i}_{j}-\d^{i}_{j})(h^{j}_{i}-\d^{j}_{i})e^{\lambda t}, \lambda>0.\eeq
Then \begin{align}\begin{split}
\dot{G}-\dot{\Phi}F^{kl}G_{kl}&=\Big((\dot{h}^{i}_{j}-\dot{\Phi}F^{kl}h^{i}_{j;kl})(h^{j}_{i}-\d^{j}_{i})-\dot{\Phi}F^{kl}h^{i}_{j;k}h^{j}_{i;l}\Big)e^{\lambda t}+\lambda G\\
				&=\Big(\dot{\Phi}F^{kl}h_{kr}h^{r}_{l}h^{i}_{j}(h^{j}_{i}-\d^{j}_{i})+(\Phi-\dot{\Phi}F)h^{ki}h_{kj}(h^{j}_{i}-\d^{j}_{i})\\
				&\hphantom{=}+\ddot{\Phi}F_{j}F^{i}(h^{j}_{i}-\d^{j}_{i})+\dot{\Phi}F^{kl,rs}h_{kl;j}{h_{rs;}}^{i}(h^{j}_{i}-\d^{j}_{i})\\
				&\hphantom{=}-(\Phi+\dot{\Phi}F)\d^{i}_{j}(h^{j}_{i}-\d^{j}_{i})+\dot{\Phi}F^{kl}g_{kl}h^{i}_{j}(h^{j}_{i}-\d^{j}_{i})\\
				&\hphantom{=}-\dot{\Phi}F^{kl}h^{i}_{j;k}h^{j}_{i;l}\Big)e^{\lambda t}+\lambda G. \end{split}\end{align}
%Suppose $G$ attains a maximal value at some $(t_{0},\xi_{0})\in (T,\~{T}],$ $G(t_{0},\xi_{0})>0.$
Fix $0<T<\infty$ and suppose \eq \sup_{[0,T]\times M}G=G(t_{0},\xi_{0})>0,\ t_{0}>0.\eeq

Since $\abs{h^{i}_{j}-\d^{i}_{j}}\rightarrow 0$ using Lemma \ref{kappaconv}, we may suppose that $t_{0}$ is so large, that bad terms involving derivatives of the second fundamental form can be absorbed by the term $-\dot{\Phi}F^{kl}h^{i}_{j;k}h^{j}_{i;l}.$
There holds
\eq h^{i}_{k}h^{k}_{j}=(h^{i}_{k}-\d^{i}_{k})(h^{k}_{j}-\d^{k}_{j})+2(h^{i}_{j}-\d^{i}_{j})+\d^{i}_{j}.\eeq
Thus there exists $T_{0}=T_{0}(n,p,M_{0}),$ such that for $t_{0}>T_{0}$ we have
\begin{align}\begin{split}
0&\leq \Big(\dot{\Phi}F^{kl}h_{kr}h^{r}_{l}h^{i}_{j}(h^{j}_{i}-\d^{j}_{i})+(\Phi-\dot{\Phi}F)(h^{i}_{k}-\d^{i}_{k})(h^{k}_{j}-\d^{k}_{j})(h^{j}_{i}-\d^{j}_{i})\\
	&\hphantom{=}+2(\Phi-\dot{\Phi}F)(h^{i}_{j}-\d^{i}_{j})(h^{j}_{i}-\d^{j}_{i})+(\Phi-\dot{\Phi}F)\d^{i}_{j}(h^{j}_{i}-\d^{j}_{i})\\
	&\hphantom{=}-(\Phi+\dot{\Phi}F)\d^{i}_{j}(h^{j}_{i}-\d^{j}_{i})+\dot{\Phi}F^{kl}g_{kl}h^{i}_{j}(h^{j}_{i}-\d^{j}_{i})\Big)e^{\lambda t_{0}}+\lambda G\\
	&=\Big(\dot{\Phi}F^{kl}(h_{kr}h^{r}_{l}-2h_{kl}+g_{kl})h^{i}_{j}(h^{j}_{i}-\d^{j}_{i})\\
	&\hphantom{=}+(\Phi-\dot{\Phi}F)(h^{i}_{k}-\d^{i}_{k})(h^{k}_{j}-\d^{k}_{j})(h^{j}_{i}-\d^{j}_{i})\\ 
	&\hphantom{=}+2\Phi(h^{i}_{j}-\d^{i}_{j})(h^{j}_{i}-\d^{j}_{i})\Big)e^{\lambda t_{0}}+\lambda G. \end{split}\end{align}
In $(t_{0},\xi_{0})$ choose coordinates, such that 
\eq g_{ij}=\d_{ij},\ h_{ij}=\k_{i}\d_{ij},\ \k_{1}\leq \ldots\leq \k_{n}.\eeq

Then
\begin{align}\begin{split} \label{kappaDecay1} 0&\leq \Big(\dot{\Phi}F^{ii}(\k_{i}-1)^{2}\sum_{j=1}^{n}\k_{j}(\k_{j}-1)\\
			&\hphantom{=}+(\Phi-\dot{\Phi}F)\sum_{i=1}^{n}(\k_{i}-1)^{3}\Big)e^{\lambda t_{0}}+(4\Phi+\lambda)G \\
					&\leq \Big(-4F^{-p}+\lambda +2\dot{\Phi}\sum_{j=1}^{n}\abs{\k_{j}}\abs{\k_{j}-1}\sum_{m=1}^{n}F^{mm}\\
								&\hphantom{=}+2\abs{\Phi-\dot{\Phi}F}\max\limits_{1\leq j\leq n}\abs{\k_{j}-1}\Big)G. \end{split}\end{align}
Enlarging $T_{0},$ we obtain a contradiction, if $\lambda>0$ is small.\ \\

(ii) By Proposition \ref{kappaBound} we know that $\Phi=\Phi(F(\kappa_{i}))$ is uniformly Lipschitz continuous with respect to $(\kappa_{i})$ during the flow. % on the set the second fundamental form ranges in during the flow.
Thus \eq \abs{-4F^{-p}+\frac{4}{n^{p}}}\leq c\max\limits_{i}\abs{\kappa_{i}-1}\leq ce^{-\frac{\lambda}{2}t}.\eeq
Now define \eq \~{G}=\sup\limits_{M}\frac 12\abs{h^{i}_{j}-\d^{i}_{j}}^{2}e^{\frac{4}{n^{p}}t}.\eeq
Then for $t\geq T_{0},$ cf. (i), we obtain from (\ref{kappaDecay1})
\begin{align} \begin{split} \dot{\~{G}}&\leq \Big(-4F^{-p}+\frac{4}{n^{p}} +2\dot{\Phi}\sum_{j=1}^{n}\abs{\k_{j}}\abs{\k_{j}-1}\sum_{m=1}^{n}F^{mm}\\
								&\hphantom{=}+2\abs{\Phi-\dot{\Phi}F}\max\limits_{1\leq j\leq n}\abs{\k_{j}-1}\Big)\~{G}\\
								&\leq ce^{-\frac{\lambda}{2}t}\~{G},\ c=c(n,p,M_{0}),\ \lambda=\lambda(n,p,M_{0}).\end{split} \end{align}
Thus \eq \~{G}\leq c(n,p,M_{0}),\eeq
which implies the claim.
\end{proof}

\begin{thmss} \label{optGradDecay}
In both cases of Theorem \ref{GradDecay} the conclusions hold with $\lambda=\frac{2}{n^{p}}.$
\end{thmss}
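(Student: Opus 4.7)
The plan is to sharpen the maximum principle computation from the proof of Theorem \ref{GradDecay} by inserting the sharp curvature decay of Theorem \ref{kappaDecay}. Recall that, with $w = (v-1)e^{\lambda t}$, the supremum-point inequality derived there reads
\[
0 \leq c\, e^{(\lambda - \tfrac{2}{n^p})t} + \left(\tfrac{\bar H}{n}F^{-p}(p(v-1)-(v+1)) + \lambda\right) w,
\]
and the restriction to $\lambda$ strictly smaller than $\tfrac{2}{n^p}$ was imposed only to make the linear coefficient strictly negative in the limit. I would rewrite this coefficient as
\[
(p-1)\tfrac{\bar H}{n}F^{-p}(v-1) \;+\; 2\Bigl(\tfrac{1}{n^p} - \tfrac{\bar H}{n}F^{-p}\Bigr) \;+\; \bigl(\lambda - \tfrac{2}{n^p}\bigr),
\]
and then use Theorem \ref{kappaDecay}, Corollary \ref{thetaGrowth}, and Proposition \ref{curvBound} to control $\left|\tfrac{1}{n^p} - \tfrac{\bar H}{n}F^{-p}\right| \leq c\, e^{-\tfrac{2}{n^p}t}$. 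In parallel, the inhomogeneous piece should be decomposed more carefully, extracting the additional factors $v-1$ and $|Du|^2 = (v-1)(v+1)$ already present in the original computation.

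Next I would run a bootstrap. Theorem \ref{GradDecay} provides an initial rate $\lambda_0 > 0$. For any $\lambda_1 \in (\lambda_0, \tfrac{2}{n^p})$, the inhomogeneous term $ce^{(\lambda_1 - \tfrac{2}{n^p})t}$ decays, the errors in the linear coefficient are dominated (either by $e^{-\tfrac{2}{n^p}t}$ from the refined bound on $\tfrac{1}{n^p} - \tfrac{\bar H}{n}F^{-p}$, or by factors of $v-1 \leq ce^{-\lambda_0 t}$), and the leading piece $\lambda_1 - \tfrac{2}{n^p}$ is strictly negative. The maximum principle then upgrades $\lambda$ to $\lambda_1$, and iterating pushes the rate arbitrarily close to $\tfrac{2}{n^p}$. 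Once $\lambda_0$ exceeds $\tfrac{1}{n^p}$, any residual $O((v-1)^2 e^{\tfrac{2}{n^p}t})$ remainder is summable, and a final Gronwall step on $\sup_M (v-1)e^{\tfrac{2}{n^p}t}$ reaches the endpoint.

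The main obstacle is precisely that endpoint $\lambda = \tfrac{2}{n^p}$: the leading linear coefficient vanishes and the inhomogeneous term no longer decays, so one has to combine simultaneously the exponential smallness of $\tfrac{1}{n^p} - \tfrac{\bar H}{n}F^{-p}$, the sign of $(p-1)(v-1)$ (which helps only for $p \neq 1$), and the quadratic smallness of $(v-1)^2$ gained in the bootstrap. The case $p = 1$ is the most delicate, relying entirely on the first and third mechanisms, whereas for $p \neq 1$ the $(p-1)(v-1)$ term provides the required strict negativity directly.
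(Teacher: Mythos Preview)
Your approach diverges from the paper's and, as written, has a genuine gap at the endpoint $\lambda=\tfrac{2}{n^{p}}$.

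The paper does \emph{not} sharpen the computation from Theorem~\ref{GradDecay}. Instead it returns to the evolution of $w=\tfrac12|D\varphi|^{2}=\tfrac12(v^{2}-1)$ from Lemma~\ref{GradBound}. At a spatial maximum this yields a \emph{homogeneous} differential inequality
\[
\dot{\tilde w}\le\big(2p\,F^{-p}(v\dot\vartheta-\vartheta)\vartheta^{-1}+2p(F^{-p}-nF^{-(p+1)})-2F^{-p}\big)\tilde w,
\]
with no source term at all. Since $|v\dot\vartheta-\vartheta|$ and $|F-n|$ decay exponentially by Theorem~\ref{kappaDecay} and Theorem~\ref{GradDecay}, the bracket equals $-\tfrac{2}{n^{p}}+O(e^{-\mu t})$, and a single Gronwall step gives $v^{2}-1\le ce^{-\frac{2}{n^{p}}t}$ directly. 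No bootstrap, no endpoint issue.

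Your route starts from (\ref{GradDecay2}), which already contains the inhomogeneous piece $2\dot\Phi F\big(\tfrac{\bar H}{n}-1\big)v$. This term has \emph{no} factor of $v-1$ or $|Du|^{2}$, and since $\tfrac{\bar H}{n}-1=\coth u-1\asymp e^{-2u}\asymp e^{-\frac{2}{n^{p}}t}$ (Corollary~\ref{thetaGrowth} gives both bounds), it is exactly of order $e^{-\frac{2}{n^{p}}t}$, not $o(e^{-\frac{2}{n^{p}}t})$. At $\lambda=\tfrac{2}{n^{p}}$ the source therefore becomes a nonzero constant while the linear coefficient is $o(1)$; the resulting ODE comparison $\dot W\le c+o(1)W$ for $W=(v-1)e^{\frac{2}{n^{p}}t}$ only gives $W\le c(1+t)$, i.e.\ $v-1\le c(1+t)e^{-\frac{2}{n^{p}}t}$ with a logarithmic loss. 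None of your three mechanisms (extracting $v-1$, the sign of $(p-1)(v-1)$, quadratic smallness of $(v-1)^{2}$) addresses this particular term.

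Your idea \emph{can} be repaired, but not in the way you describe: one must go back before (\ref{GradDecay2}) and retain the term $-\dot\Phi F^{ij}g_{ij}\big(\tfrac{\bar H^{2}}{n^{2}}-1\big)v$ that was dropped there. Together with the bad source it gives
\[
\dot\Phi\Big(\tfrac{\bar H}{n}-1\Big)v\Big[2F-F^{ij}g_{ij}\big(\tfrac{\bar H}{n}+1\big)\Big],
\]
and the bracket is $O(e^{-\frac{2}{n^{p}}t})$ by Theorem~\ref{kappaDecay}, so the net source is $O(e^{-\frac{4}{n^{p}}t})$, which suffices. The paper's choice of test function $|D\varphi|^{2}$ avoids this bookkeeping entirely.
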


\begin{proof}
We come back to the proof of Lemma \ref{GradBound} and define
\eq \~{w}=\sup\limits_{x\in\S^{n}}\frac 12 \abs{D\p(\cdot,x)}^{2}=w(t,x_{t}).\eeq
Using the same calculation as in (\ref{GradBound1}), we obtain
\begin{align}\begin{split}
\dot{\~{w}}&\leq \big(2(p-1)F^{-p}v\t^{p-1}\dot{\t}-2pF^{-(p+1)}\t^{p+1}F^{kl}\~{g}_{kl}\big)\~{w}\\
		&\leq \big(2p(F^{-p}v\t^{p-1}\dot{\t}-F^{-(p+1)}\t^{p+1}F^{kl}\~{g}_{kl})-2F^{-p}v\t^{p}\big)\~{w}, \end{split}\end{align}
		where $F=F(\~{h}^{i}_{j})=F(\t h^{i}_{j}).$\ \\
		
We have \eq \abs{v\dot{\t}-\t}\leq \abs{v\dot{\t}-\dot{\t}}+\abs{\dot{\t}-\t}\leq c\dot{\t}e^{-\lambda t}+e^{-\frac{t}{n^{p}}},	\eeq
and thus 
\eq\dot{\~{w}}\leq (2pF^{-p}(v\dot{\t}-\t)\t^{-1}+2p(F^{-p}-F^{-(p+1)}n)-2F^{-p})\~{w}, \eeq
where now $F=F(h^{i}_{j}).$
Since \eq \abs{F-n}\leq ce^{-\frac{\lambda}{2}t},\eeq	
we obtain \eq \dot{\~{w}}\leq \left(ce^{-\frac{\lambda}{2} t}-\frac{2}{n^{p}}\right)\~{w}, \eeq
which implies \eq \~{w}\leq ce^{-\frac{2}{n^{p}}t}.\eeq		
\end{proof}

\begin{thmss}\label{phi2DerDecay}
For the function $\p$ in Remark \ref{phi} there exists $c=c(n,p,M_{0}),$ such that
\eq \abs{D^{2}\p}\leq ce^{-\frac{t}{n^{p}}}, \eeq
where the derivatives as well as the norm are taken with respect to $\sigma_{ij}.$
\end{thmss}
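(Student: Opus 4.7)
The plan is to read off the Hessian $\varphi_{jk}$ directly from the algebraic identity (\ref{phiA2}) that already expresses $\tilde h^l_k$ as essentially $-v^{-1}$ times $\tilde g^{lj}\varphi_{jk}$ plus a scalar. Inverting that relation gives a closed formula of the form
\eq \varphi_{jk}=\dot\vartheta\,\tilde g_{jk}-v\,\tilde h_{jk}, \eeq
so that no further maximum-principle argument should be needed; the result ought to follow by plugging in the decay estimates from Theorems \ref{kappaDecay} and \ref{optGradDecay} together with the $C^{0}$-bounds on $u$.

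The next step is to convert this back into $\sigma$-quantities. Using $\tilde g_{jk}=\varphi_j\varphi_k+\sigma_{jk}$ from Remark \ref{phi} and $\tilde h_{jk}=\vartheta^{-1}h_{jk}=\vartheta\,h^i_j(\varphi_i\varphi_k+\sigma_{ik})$, and splitting $h^i_j=\delta^i_j+(h^i_j-\delta^i_j)$, the formula should reorganize as
\eq \varphi_{jk}=(\dot\vartheta-v\vartheta)(\varphi_j\varphi_k+\sigma_{jk})-v\vartheta\,(h^i_j-\delta^i_j)(\varphi_i\varphi_k+\sigma_{ik}). \eeq
The two bracketed tensors are uniformly bounded in the $\sigma$-norm because $|D\varphi|^{2}=\vartheta^{-2}|Du|^{2}$ is in fact exponentially small by Theorem \ref{optGradDecay} and Corollary \ref{thetaGrowth}. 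So everything reduces to bounding the two scalar coefficients by $ce^{-t/n^{p}}$.

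For this I would estimate $\dot\vartheta-v\vartheta=(\cosh u-\sinh u)-(v-1)\sinh u$ piece by piece: the first piece equals $e^{-u}$, which is $O(e^{-t/n^{p}})$ thanks to the lower bound $u\ge t/n^{p}-c$ from Lemma \ref{oscBound} and Corollary \ref{sphFlowGrowth}; the second is bounded by $c\,\vartheta\,(v-1)\le c\,e^{t/n^{p}}\cdot e^{-2t/n^{p}}$ using Theorem \ref{optGradDecay}. The coefficient $v\vartheta\,(h^i_j-\delta^i_j)$ is handled the same way: $v\vartheta$ is of order $e^{t/n^{p}}$, while Theorem \ref{kappaDecay} contributes $e^{-2t/n^{p}}$. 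Adding the contributions yields $|D^{2}\varphi|_\sigma\le ce^{-t/n^{p}}$.

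The argument is essentially algebraic and I do not expect any real obstacle; the only delicate point is the book-keeping of exponents, since the $e^{t/n^{p}}$ growth of $\vartheta$ must be exactly absorbed by the $e^{-2t/n^{p}}$ decay in both Theorem \ref{kappaDecay} and Theorem \ref{optGradDecay} to produce a single surviving factor $e^{-t/n^{p}}$. This sharpness matches the second-derivative rate already observed for the spherical flow at the start of the preceding subsection, which is a good sanity check that the estimate cannot be improved.
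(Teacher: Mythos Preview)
Your proposal is correct and follows essentially the same route as the paper: both arguments extract $\varphi_{jk}$ algebraically from the relation \eqref{phiA}/\eqref{phiA2}, decompose $\dot\vartheta\,\delta^{i}_{j}-v\vartheta h^{i}_{j}$ via the telescoping chain $\dot\vartheta\to\vartheta\to v\vartheta\to v\vartheta h^{i}_{j}$, and feed in the $e^{-2t/n^{p}}$ decay of $v-1$ and of $h^{i}_{j}-\delta^{i}_{j}$ together with the $e^{t/n^{p}}$ growth of $\vartheta$. The only cosmetic difference is that you multiply through by $\tilde g_{jl}$ to invert \eqref{phiA2} completely, whereas the paper raises with $\sigma^{ik}$ and therefore carries an extra term $v^{-2}\varphi^{i}\varphi^{k}\varphi_{kj}$ of size $|D\varphi|^{2}|D^{2}\varphi|$, which it then absorbs into the left-hand side for $t$ large; your version avoids this absorption step at the cost of tracking the factors $\tilde g_{jk}=\varphi_{j}\varphi_{k}+\sigma_{jk}$ explicitly.
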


\begin{proof}
(\ref{phiA}) implies
\eq \p^{i}_{j}=\sigma^{ik}\p_{kj}=v^{-2}\p^{i}\p^{k}\p_{kj}+\dot{\t}\d^{i}_{j}-v\t h^{i}_{j}. \eeq
In view of Theorem \ref{kappaDecay} and Theorem \ref{optGradDecay} we deduce \eq|\dot{\t}-\t|=e^{-u}\leq ce^{-\frac{t}{n^{p}}}\eeq and, using (\ref{thetaGrowth1}), % $\t\approx e^{\frac{t}{n^{p}}},$ 
we obtain
\begin{align}\begin{split}
\abs{D^{2}\p}&\leq \abs{v^{-2}\p^{i}\p^{k}\p_{kj}}+\abs{\dot{\t}\d^{i}_{j}-v\t h^{i}_{j}}\\
		&\leq c\abs{D\p}^{2}\abs{D^{2}\p}+\abs{\dot{\t}\d^{i}_{j}-\t\d^{i}_{j}}+\abs{\t\d^{i}_{j}-v\t\d^{i}_{j}}+\abs{v\t\d^{i}_{j}-v\t h^{i}_{j}}\\
		&\leq c\abs{D\p}^{2}\abs{D^{2}\p}+ce^{-\frac{t}{n^{p}}}\\
		&\leq \~{c}e^{-\frac{2}{n^{p}}t}\abs{D^{2}\p}+ce^{-\frac{t}{n^{p}}}, \end{split}\end{align}
where $c,\~{c}$ depend on $n,p$ and $M_{0}.$ Choosing $T=T(n,p,M_{0})$ such that 
\eq \~{c}e^{-\frac{2}{n^{p}}t}<\frac 12\ \ \forall t\geq T. \eeq
we obtain the claim.
\end{proof}

\section{Decay estimates of higher order}

We first need a definition to simplify the notation, compare \cite[Def. 6.6]{cg:ImcfHyp}, and the remark afterwards.

\begin{definition}
(1) For tensors $S$ and $T$, the symbol $S\star T$ denotes an arbitrary linear combination of contractions of $S\otimes T.$ We do not distinguish between $S\star T$ and $cS\star T,$ $c=c(n,p,M_{0}).$\ \\

(2) For $\e\in\R,$ the symbol $\O^{\e}$ denotes an arbitrary tensor, which can be estimated like
\eq \abs{\O^{\e}}\leq c_{\e}e^{\frac{\e t}{n^{p}}}, c_{\e}=c(n,p,M_{0},\e), \eeq
where the norm is taken with respect to the spherical metric.\ \\

(3) For a tensor $T,$ the symbol $D^{k}T$ denotes an arbitrary covariant derivative of order $k$ with respect to the spherical metric.\ \\

(4) If a derivative of order $m$ is expressed as an algebraic combination of terms involving $\O^{\e},$ then the corresponding constants may additionally depend on $m.$
\end{definition}

Until now we have shown, that the function \eq \p=\int_{r_{0}}^{u}\t^{-1} \eeq
 satisfies the scalar parabolic equation \eq \label{phiScalarFlow5}\frac{\del\p}{\del t}\equiv\dot{\p}=-\t^{p-1}v\Phi\ \mathrm{on}\ [0,\infty)\times\S^{n}, \eeq
 where $F=F(\~{h}^{i}_{j})=F(\t h^{i}_{j}).$
 Furthermore, we have proven the estimates
 \eq D\p=\O^{-1}\eeq and
 \eq D^{2}\p=\O^{-1}.\eeq
 On the following pages we prove analogous estimates for higher derivatives of $\p$ by differentiating (\ref{phiScalarFlow}). We prepare the final result by examining all of the terms separately first. In the sequel, we suppose $m\geq 3.$
 
 \begin{lemss} \label{derivrules}
 For functions $g,f^{i}\colon M\rightarrow \R$ on a manifold the following generalizations of the product- and chain rule hold for higher derivatives. 
 \eq  D^{m}\left(\prod_{i=1}^{k}f^{i}\right)=\sum_{j_{1}+\ldots +j_{k}=m}c_{m,k}\prod_{i=1}^{k}D^{j_{i}}f^{i}, \eeq
 \eq \label{diBruno} D^{m}(f\circ g)=\sum_{k_{1}+\ldots +mk_{m}=m}\frac{m!}{k_{1}!\dots k_{m}!}D^{\sum_{i=1}^{m}k_{i}}f(g)\prod_{i=1}^{m}\left(\frac{D^{i}g}{i!}\right)^{k_{i}}. \eeq
 \end{lemss}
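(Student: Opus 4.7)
The plan is to prove both identities by induction on the order $m$ of differentiation. Since $D$ is a covariant derivative acting on scalars (so commutators vanish for functions), it satisfies the ordinary Leibniz rule $D(fg)=(Df)g+f\,Dg$ and the ordinary chain rule $D(f\circ g)=f'(g)\,Dg$, just as for partial derivatives of real-valued functions. This reduces both statements to classical combinatorial identities for the usual derivative operator.

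For the first formula (generalized Leibniz), the base case $m=1$ is the iterated product rule $D\bigl(\prod_{i=1}^k f^i\bigr)=\sum_{i=1}^k f^1\cdots Df^i\cdots f^k$, which matches the claimed form with one index $j_i=1$ and the rest zero. In the inductive step one differentiates each summand $\prod_{i=1}^k D^{j_i}f^i$ of the induction hypothesis once more, applies the Leibniz rule on the product, and collects terms according to the new multi-index $(j_1',\dots,j_k')$ with $\sum j_i'=m+1$. The numerical coefficients $c_{m+1,k}$ are obtained from the $c_{m,k}$ by elementary bookkeeping, but since the statement only claims the existence of \emph{some} constants $c_{m,k}$, this bookkeeping need not be made explicit.

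For the second formula (Faà di Bruno), the base case $m=1$ is the chain rule, which corresponds to $k_1=1$, $k_i=0$ for $i\ge 2$ with coefficient $\frac{1!}{1!}=1$. For the inductive step, apply $D$ to the right-hand side of \eqref{diBruno} for $m$: the term $D^{\sum k_i}f(g)$ picks up a factor $D^{\sum k_i+1}f(g)\,Dg$ by the chain rule, while differentiating the product $\prod (D^i g/i!)^{k_i}$ by the Leibniz rule replaces one factor $D^i g/i!$ by $D^{i+1}g/i!=(i+1)\,D^{i+1}g/(i+1)!$. Collecting these contributions and relabeling indices via $k_i'=k_i-1$, $k_{i+1}'=k_{i+1}+1$ (or $k_1'=k_1+1$ in the chain-rule contribution), one verifies the classical Bell-polynomial recurrence
\[
B_{m+1,k_1',\dots,k_{m+1}'} = \sum_{i\ge 1} i\,k_i'\,B_{m,k_1',\dots,k_i'-1,k_{i-1}'+1,\dots}
\]
which is equivalent to the multinomial identity underlying $\frac{(m+1)!}{\prod k_i'!}$.

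The only real obstacle is the combinatorial verification in the inductive step of Faà di Bruno, but this is a classical identity; since the paper will ultimately only use the formula at the level of the $\O^\e$ and $S\star T$ notation, the exact values of the coefficients are secondary to the structural statement that $D^m(f\circ g)$ is a sum of products of derivatives of $f$ evaluated at $g$ with derivatives of $g$, of the indicated combinatorial shape. Hence the proof is essentially a citation of the classical result together with the remark that $D$ acts on scalar functions exactly as the one-variable derivative in these combinatorial manipulations.
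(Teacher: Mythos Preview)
Your proposal is correct and essentially matches the paper's approach: both prove the generalized Leibniz rule by induction on $m$ (the paper writes the step as $D^{m+1}=D^m\circ D$ rather than your $D\circ D^m$, but the bookkeeping is identical), and for the Fa\`a di Bruno formula the paper simply cites the classical result from \cite{KrPa}, which is exactly what your final paragraph reduces to after the inductive sketch.
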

 
 \begin{proof}
 For $m=1$ this is the ordinary product rule. If the claim holds for $m\geq 1,$ we find
 \begin{align} \begin{split}
 D^{m+1}\left(\prod_{i=1}^{k}f^{i}\right)&=D^{m}\left(\sum_{j_{1}+\ldots+j_{k}=1}\prod_{i=1}^{k}D^{j_{i}}f^{i}\right)\\
 						&=\sum_{j_{1}+\ldots +j_{k}=1}\sum_{l_{1}+\ldots+l_{k}=m}c_{m,k}\prod_{i=1}^{k}D^{l_{i}+j_{i}}f^{i}\\
						&=\sum_{(j_{1}+l_{1})+\ldots (j_{k}+l_{k})=m+1}\~{c}_{m,k}\prod_{i=1}^{k}D^{j_{i}+l_{i}}f^{i}, \end{split}\end{align}
	as desired.

The generalized chain rule is known as formula of Fa\'a di Bruno, cf. \cite[p. 17, Thm. 1.3.2]{KrPa}.  
 \end{proof}
 
 Let us remark, that the cited version of the generalized chain rule is the one, which holds for functions depending on one variable. Although our functions depend on $n$ variables, all that matters is the order of the multiindex in most of the cases, so that such a formal version is all we need.
 
 \begin{lemss} \label{higherorderv}
 Let $\p$ be the solution of (\ref{phiScalarFlow}) and suppose, that there exists $0<\g\leq 1,$ such that
 \eq D^{k}\p=\O^{-\g}\ \ \forall 1\leq k\leq m-1,\eeq
 then 
 \eq \label{higherorderv1}D^{k}v=\O^{-2\g}\ \ \forall 1\leq k\leq m-2,\eeq 
\eq \label{higherorderv2} v_{j_{1}\ldots j_{m-1}}=\O^{-2\g}+v^{-1}\p_{kj_{1}\ldots j_{m-1}}\p^{k},\eeq and 
\eq v_{i_{1}\ldots i_{m}}=\O^{-2\g}+\O^{-\g}\star D^{m}\p+v^{-1}\p_{ki_{1}\ldots i_{m}}\p^{k}.\eeq
 \end{lemss}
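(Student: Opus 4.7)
The starting point is the identity $v^{2}=1+\sigma^{ij}\varphi_{i}\varphi_{j}$, which upon one covariant differentiation with respect to $\sigma_{ij}$ yields $v_{k}=v^{-1}\varphi^{j}\varphi_{jk}$. Iterating this identity via the product rule, every covariant derivative $D^{\ell}v$ expands as a sum of terms of the schematic form $v^{-a}\prod_{s=1}^{q}D^{b_{s}}\varphi$, with integers $a\geq 1$, $q\geq 2$, and $\sum_{s=1}^{q}b_{s}=\ell+1$, each $b_{s}\geq 1$. Since $1\leq v\leq c$ uniformly by Lemma \ref{GradBound} (and Lemma \ref{vconvex} when $p>1$), the factors $v^{-a}$ are harmless bounded multipliers. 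This structural decomposition is the only ingredient needed.

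For the first claim (\ref{higherorderv1}) I would induct on $k\in\{1,\ldots,m-2\}$. In every summand of $D^{k}v$, the orders satisfy $b_{s}\leq k+1\leq m-1$, so each factor $D^{b_{s}}\varphi$ is $\mathcal{O}^{-\gamma}$ by hypothesis. Since $q\geq 2$, the product is $\mathcal{O}^{-2\gamma}$. The base case $k=1$ is immediate from $v_{k}=v^{-1}\varphi^{j}\varphi_{jk}$ using $m\geq 3$, so that both $D\varphi$ and $D^{2}\varphi$ fall under the hypothesis.

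The same structural expansion applies at $\ell=m-1$ and $\ell=m$, but now the $\varphi$-derivatives may reach orders $m$ or $m+1$, outside the hypothesis. The key observation is that in the Leibniz iteration starting from $v^{-1}\varphi^{j}\varphi_{jk}$, the \emph{unique} summand in which every new covariant derivative successively lands on the innermost $\varphi$-derivative factor is $v^{-1}\varphi^{k}\varphi_{ki_{1}\ldots i_{\ell}}$; in every other summand at least one derivative has been diverted onto either a factor $v^{-a}$ or the outer $\varphi^{j}=\sigma^{jl}\varphi_{l}$ factor, forcing each individual $\varphi$-derivative factor to have order at most $\ell$. For $\ell=m-1$ these remaining summands have all $\varphi$-factors of order $\leq m-1$, hence $\mathcal{O}^{-\gamma}$, giving $\mathcal{O}^{-2\gamma}$ and establishing (\ref{higherorderv2}). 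For $\ell=m$ the remaining summands split into those containing a single $D^{m}\varphi$ factor, whose other $\varphi$-factors are of order $\leq m-1$ and thus $\mathcal{O}^{-\gamma}$, contributing $\mathcal{O}^{-\gamma}\star D^{m}\varphi$, and those with all $\varphi$-factors of order $\leq m-1$, contributing $\mathcal{O}^{-2\gamma}$, proving the third claim.

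The only real work is the combinatorial bookkeeping that justifies the schematic expansion $v^{-a}\prod D^{b_{s}}\varphi$ with $q\geq 2$: one verifies by induction on $\ell$ that Leibniz preserves this form, and that the top-order summand is unique. Commutators of covariant derivatives on $\mathbb{S}^{n}$ introduce only constant curvature corrections, which redistribute indices onto strictly lower-order $\varphi$-factors and thus respect all the error types stated.
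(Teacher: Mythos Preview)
Your approach coincides with the paper's: both start from $v_{k}=v^{-1}\varphi^{j}\varphi_{jk}$ and proceed by Leibniz/induction, isolating the unique top-order summand $v^{-1}\varphi^{k}\varphi_{ki_{1}\ldots i_{\ell}}$ and estimating the remainder. The paper organises the induction slightly differently, applying Leibniz directly to the product $\varphi_{ai_{1}}\cdot(\varphi^{a}v^{-1})$ and invoking the already-proved cases (\ref{higherorderv1}), (\ref{higherorderv2}) for the second factor, but the substance is the same.

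One correction to your bookkeeping: the identity $\sum_{s}b_{s}=\ell+1$ is false already for $\ell=1$, where $v_{k}=v^{-1}\varphi^{j}\varphi_{jk}$ has $b_{1}+b_{2}=3$. The correct relation is $\sum_{s}b_{s}=\ell+q$ (check it inductively: differentiating a factor $D^{b}\varphi$ raises $\sum b_{s}$ and $\ell$ each by one, while differentiating $v^{-a}$ produces $v^{-a-2}\varphi^{j}\varphi_{jk}$, raising $\sum b_{s}$ by $3$, $q$ by $2$, and $\ell$ by $1$). Fortunately your argument never uses the wrong formula, only its consequences $b_{s}\leq \ell+1$ and $q\geq 2$, and these do follow from the corrected identity: $b_{s}\leq \sum_{t}b_{t}-(q-1)=\ell+1$. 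The impossibility of two factors of order $m$ in $D^{m}v$ (needed for the third claim) also follows cleanly from $\sum b_{s}=m+q$ with each $b_{s}\geq 1$, since two factors equal to $m$ would force $q-2\leq q-m$, i.e.\ $m\leq 2$, contradicting the standing hypothesis $m\geq 3$. With this fix your proof is complete.
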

 
 \begin{proof}
  For $k=1$ we have \eq \label{higherorderv3} v_{i_{1}}=v^{-1}\p_{ai_{1}}\p^{a}=\O^{-2\g}.\eeq
 Suppose the first claim to hold for $1\leq j\leq l\leq m-3.$ Then
 \eq D^{l+1}v=\sum_{s+r=l}D^{s}(v^{-1})\star D^{r}(\p_{ai_{1}}\p^{a})=\O^{-2\g},\eeq
 since \eq D^{s}(v^{-1})=\O^{-2\g}\ \forall 1\leq s\leq l\eeq and \eq D^{r}(\p_{ai_{1}}\p^{a})=\O^{-2\g}\ \forall 0\leq r\leq l,\eeq by Lemma \ref{derivrules}. To prove (\ref{higherorderv2}) we infer from (\ref{higherorderv3})
 \eq \begin{aligned} v_{j_{1}\ldots j_{m-1}}&=(\p_{aj_{1}}\p^{a}v^{-1})_{;j_{2}\ldots j_{m-1}}\\
 									&=\p_{aj_{1}\ldots j_{m-1}}\p^{a}v^{-1}+\O^{-2\g}, \end{aligned}\eeq
 where we used (\ref{higherorderv1}) to estimate $D^{m-2}(v^{-1}).$ Finally, we deduce
 \eq \begin{aligned} v_{i_{1}\ldots i_{m}}&=(\p_{ai_{1}}\p^{a}v^{-1})_{;i_{2}\dots i_{m}}\\
 							&=\p_{ai_{1}\ldots i_{m}}\p^{a}v^{-1}+D^{m}\p\star D(\p^{a}v^{-1})+D^{m}\p\star\O^{-\g}+\O^{-2\g}\\
							&=\p_{ai_{1}\ldots i_{m}}\p^{a}v^{-1}+D^{m}\p\star\O^{-\g}+\O^{-2\g}, \end{aligned}\eeq
where we used (\ref{higherorderv1}) and (\ref{higherorderv2}). 
 \end{proof}
 
 \begin{lemss} \label{uandphi}
  Let $\p$ be the solution of (\ref{phiScalarFlow}) and suppose, that there exists $0<\g\leq 1,$ such that
 \eq D^{k}\p=\O^{-\g}\ \ \forall 1\leq k\leq m-1,\eeq
  then \eq D^{k}u=\O^{k(1-\g)}\ \ \forall 1\leq k\leq m-1. \eeq
  \end{lemss}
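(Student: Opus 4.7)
The plan is to view $u$ as the composition $u = U(\p)$, where $U\colon\R\to\R$ is the inverse of $\p\mapsto\int_{r_{0}}^{\p}\t^{-1}$, i.e. $U'=\t(U)$. Since $u-t/n^{p}$ is uniformly bounded (by Corollary \ref{sphFlowGrowth} and Lemma \ref{oscBound}), we have $\t(u)=\sinh u\leq ce^{t/n^{p}}$, and all the $r$-derivatives $\dot{\t},\ddot{\t},\ldots$ equal either $\sinh u$ or $\cosh u$, so each of them, and hence every polynomial expression in them of bounded complexity, is an $\O^{1}$. From $U'=\t(U)$ one sees inductively that $U^{(j)}(\p)$ is a polynomial in $\t,\dot{\t},\ldots,\t^{(j-1)}$ of total ``$\t$-degree'' $j$; consequently $U^{(j)}(\p)=\O^{j}$ for every $j\geq 1$.

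Next, I would apply the multivariable Faà di Bruno formula from Lemma \ref{derivrules} to the composition $u=U\circ\p$, obtaining the schematic identity
\eq
u_{i_{1}\ldots i_{k}}=\sum_{\pi}U^{(|\pi|)}(\p)\prod_{B\in\pi}\p_{i_{B}},
\eeq
where the sum runs over partitions $\pi$ of $\{i_{1},\ldots,i_{k}\}$ into nonempty blocks $B$, and $\p_{i_{B}}$ denotes the iterated covariant derivative of $\p$ indexed by $B$. By hypothesis each such block contributes a factor $\O^{-\g}$ (since $|B|\leq k\leq m-1$), so the generic summand is bounded by $\O^{|\pi|}\cdot\O^{-|\pi|\g}=\O^{|\pi|(1-\g)}$.

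Since $0<\g\leq 1$, we have $1-\g\geq 0$, so $\O^{|\pi|(1-\g)}$ is maximized by taking $|\pi|$ as large as possible, namely $|\pi|=k$ (the partition into singletons). Summing over the finitely many partitions and absorbing combinatorial constants into the implied constant yields
\eq
D^{k}u=\O^{k(1-\g)}\quad\forall\,1\leq k\leq m-1,
\eeq
as claimed.

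No step here presents a genuine obstacle: the content is essentially bookkeeping. The only point requiring mild care is verifying the bound $U^{(j)}=\O^{j}$, which is a direct induction exploiting that differentiating $U^{(j-1)}$ with respect to $\p$ inserts one extra factor of $\t=\O^{1}$ through the chain rule (since $dU/d\p=\t$). Once that is in place, the Faà di Bruno identity together with the crude inequality $|\pi|(1-\g)\leq k(1-\g)$ closes the argument.
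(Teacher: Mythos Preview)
Your argument is correct. Both proofs rest on the Fa\`a di Bruno formula and the growth $\t^{(a)}(u)=\O^{1}$, but the organization differs. The paper writes $Du=\t\,D\p$ and proceeds by induction on the order: to bound $D^{l+1}u$ it applies the product rule and then Fa\`a di Bruno to $D^{s}(\t\circ u)$, which requires the already established bounds $D^{i}u=\O^{i(1-\g)}$ for $i\leq l$. You instead introduce the inverse function $U$ with $U'=\t(U)$, prove $U^{(j)}=\O^{j}$ by a separate induction on $j$ (a purely ODE computation, independent of spatial derivatives of $u$), and then apply Fa\`a di Bruno once to $u=U\circ\p$. Your route decouples the two inductions and is a touch cleaner, since the estimate $U^{(j)}=\O^{j}$ does not feed back into itself through spatial derivatives; the paper's route stays closer to the quantities that actually appear later in Lemma~\ref{higherordertheta}.
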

  
  \begin{proof}
  Note, that
  \eq D\p=\t^{-1}Du \Rightarrow Du=\t D\p,\eeq
  where $\t=\t(u).$ Thus the claim holds for $k=1$ in view of Corollary \ref{thetaGrowth}. Suppose the claim to hold for $1\leq l\leq m-2.$ Then
  \eq D^{l+1}u=D^{l}(\t D\p)=\sum_{r+s=l}D^{s}(\t)\star D^{r}(D\p)=\O^{(l+1)(1-\g)},\eeq
  since $D^{r}(D\p)=\O^{-\g}\ \forall 0\leq r\leq l$ and 
  \eq D^{s}\t=\sum_{k_{1}+\ldots +sk_{s}=s}c_{s}\t^{(\sum_{i=1}^{s}k_{i})}(u)\prod_{i=1}^{s}\left(\frac{D^{i}u}{i!}\right)^{k_{i}}=\O^{s(1-\g)+1}, \eeq
  
  since $D^{i}u=\O^{i(1-\g)}\ \forall 1\leq i\leq s$ and 
  \eq \t^{(a)}=\begin{cases} \t, & a\ \mathrm{even}\\
  					\dot{\t}, & a\ \mathrm{odd} \end{cases} =\O^{1}. \eeq
    \end{proof}

\begin{lemss} \label{higherordertheta}
 Let $\p$ be the solution of (\ref{phiScalarFlow}) and suppose, that there exists $0<\g\leq 1,$ such that
  \eq D^{k}\p=\O^{-\g}\ \ \forall 1\leq k\leq m-1,\eeq
 then 
 \eq \label{higherordertheta1} (\t^{p-1})_{i_{1}\ldots i_{m}}=(p-1)\t^{p-1}\dot{\t}\p_{i_{1}\ldots i_{m}}+\O^{p-1+m(1-\g)}\eeq and 
  \eq \label{higherordertheta2} D^{k}(\t^{p-1})=\O^{p-1+k(1-\g)}\ \ \forall 0\leq k\leq m-1.\eeq
\end{lemss}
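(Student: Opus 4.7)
The plan is to treat $\t^{p-1}$ as the composition $g\circ u$ with $g(r)=\sinh^{p-1}(r)$ and to apply the Fa\'a di Bruno formula (\ref{diBruno}) exactly as was done in Lemmas \ref{higherorderv} and \ref{uandphi}. The preparatory observation is that, for every fixed $a\geq 0$, the one-variable derivative $g^{(a)}(u)$ is of order $\O^{p-1}$: by induction on $a$ it can be written as a polynomial in $\coth u$ and $\sinh^{-2}u$ times a sum of terms of the form $\sinh^{p-j}(u)$ with $j\geq 1$, and all of these factors are controlled using the uniform upper bounds of Corollary \ref{thetaGrowth} together with the inequality $\sinh^{p-j}(u)\leq\sinh^{p-1}(u)$ for $u$ large.

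To obtain (\ref{higherordertheta2}) at a given $1\leq k\leq m-1$ (the case $k=0$ being immediate from Corollary \ref{thetaGrowth}) I would expand
\[
D^{k}(\t^{p-1})=\sum_{k_{1}+2k_{2}+\ldots+kk_{k}=k}\frac{k!}{k_{1}!\cdots k_{k}!}\, g^{(\sum_i k_{i})}(u)\prod_{i=1}^{k}\bigg(\frac{D^{i}u}{i!}\bigg)^{k_{i}}
\]
and bound each summand by multiplying the auxiliary estimate $g^{(\sum_i k_{i})}(u)=\O^{p-1}$ against $\prod_{i}\O^{ik_{i}(1-\g)}=\O^{k(1-\g)}$, which follows from Lemma \ref{uandphi}. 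For (\ref{higherordertheta1}) I would run the same expansion at order $k=m$ and isolate the unique summand in which the top-order derivative $u_{i_{1}\ldots i_{m}}$ appears, namely $k_{m}=1$ and $k_{i}=0$ for $i<m$; this contributes $g'(u)\cdot u_{i_{1}\ldots i_{m}}=(p-1)\t^{p-2}\dot\t\,u_{i_{1}\ldots i_{m}}$, while every other summand involves only $D^{i}u$ with $i\leq m-1$ and is therefore $\O^{p-1+m(1-\g)}$ by the previous step. Finally I would rewrite $u_{i_{1}\ldots i_{m}}=\t\,\p_{i_{1}\ldots i_{m}}+E$ by differentiating the identity $u_{i}=\t\p_{i}$ with $m-1$ further covariant derivatives via the product rule of Lemma \ref{derivrules}: separating the single term in which $\t$ is never differentiated produces the desired $\t\p_{i_{1}\ldots i_{m}}$, while $E$ is a sum of products $D^{r}(\t)\cdot D^{s}(\p_{i_{1}})$ with $r\geq 1$ and $r+s=m-1$; Fa\'a di Bruno applied to $\t=\sinh u$ yields $D^{r}\t=\O^{1+r(1-\g)}$, and the hypothesis provides $D^{s}(\p_{i_{1}})=\O^{-\g}$, so each such product lies in $\O^{1+r(1-\g)-\g}\subseteq\O^{m(1-\g)}$. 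Multiplication by the prefactor $(p-1)\t^{p-2}\dot\t=\O^{p-1}$ absorbs this error into the target $\O^{p-1+m(1-\g)}$, yielding (\ref{higherordertheta1}).

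The only genuine bookkeeping obstacle is the auxiliary fact $g^{(a)}(u)=\O^{p-1}$, namely that all one-variable derivatives of $\sinh^{p-1}$ are simultaneously of the same exponential order rather than growing with $a$; this is where the uniform upper bounds on $\coth u$ and $\sinh^{-2}u$ from Corollary \ref{thetaGrowth} become essential. Once that point is settled, every other step is a mechanical application of the Fa\'a di Bruno and product-rule calculus already developed in Lemmas \ref{higherorderv} and \ref{uandphi}.
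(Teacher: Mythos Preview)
Your proposal is correct and follows essentially the same strategy as the paper: Fa\'a di Bruno combined with the product rule and Lemma~\ref{uandphi}. Two small differences are worth noting.

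First, for the auxiliary estimate $g^{(a)}(u)=\O^{p-1}$, the paper factors $g=f\circ\t$ with $f(x)=x^{p-1}$ and applies di Bruno to this one-variable composition: since $f^{(a)}(\t)=c_{a}\t^{p-1-a}=\O^{p-1-a}$ and each $\t^{(i)}$ equals either $\t$ or $\dot\t$, hence $\O^{1}$, the product $\prod_i(\t^{(i)})^{k_i}$ is $\O^{\sum k_i}$ and the sum telescopes to $\O^{p-1}$. This is slightly cleaner than your direct structural induction on $g^{(a)}$, though both reach the same conclusion.

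Second, for (\ref{higherordertheta1}) the paper starts from the first-derivative identity $(\t^{p-1})_{i_{1}}=(p-1)\dot\t\,\t^{p-1}\p_{i_{1}}$ and applies the product rule $m-1$ further times, isolating the term where all derivatives fall on $\p_{i_1}$; this bypasses the detour through $u_{i_1\ldots i_m}$ and the subsequent conversion back to $\p$ that you carry out. Your route is equally valid, just one step longer.
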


\begin{proof}
For the real function $f(x)=x^{p-1}$ and $g=f\circ\t$ there holds
\eq g^{(s)}=\sum_{k_{1}+\ldots+sk_{s}=s}c_{s}f^{(\sum_{i=1}^{s}k_{i})}(\t)\prod_{i=1}^{s}\left(\frac{\t^{(i)}}{i!}\right)^{k_{i}}=\O^{p-1},\eeq
since \eq f^{(a)}(\t)=\prod_{i=1}^{a}(p-i)\t^{p-1-a}=\O^{p-1-a}, \eeq $a=\sum_{i=1}^{s}k_{i},$ and
\eq \prod_{i=1}^{s}\left(\frac{\t^{(i)}}{i!}\right)^{k_{i}}=\O^{a}. \eeq
Thus, (\ref{higherordertheta2}) follows from the the di Bruno formula again, (\ref{diBruno}), applied to $g\circ u,$ and by Lemma  \ref{uandphi}. Note that (\ref{higherordertheta2}) also holds for $\dot{\t}$ instead of $\t,$ because they share the same growth behavior and there holds $\ddot{\t}=\t.$\ \\

In order to prove (\ref{higherordertheta1}), observe that \eq (\t^{p-1})_{i_{1}}=(p-1)\dot{\t}\t^{p-1}\p_{i_{1}}\eeq
 and \eq \begin{aligned} (\t^{p-1})_{i_{1}\ldots i_{m}}&=(p-1)\t^{p-1}\dot{\t}\p_{i_{1}\ldots i_{m}}\\
 						&\hphantom{=}+\sum_{\substack{s+r=m-1\\ s\geq 1}}D^{s}((p-1)\dot{\t}\t^{p-1})\star D^{r}(\p_{i_{1}})\\
 							&=(p-1)\t^{p-1}\dot{\t}\p_{i_{1}\ldots i_{m}}+\O^{p-1+m(1-\g)}, \end{aligned}\eeq
where we used $D^{k}\p=\O^{-\g}$ and (\ref{higherordertheta1}) applied to $\t$ and $\dot{\t}$ as well, also using
\eq D^{s}(\dot{\t}\t)=\sum_{s_{1}+s_{2}=s}D^{s_{1}}\dot{\t}\star D^{s_{2}}\t=\O^{1+s_{1}(1-\g)}\star\O^{p-1+s_{2}(1-\g)}.\eeq
\end{proof}

\begin{lemss} \label{gtildeandhtilde}
 Let $\p$ be the solution of (\ref{phiScalarFlow}) and suppose, that there exists $0<\g\leq 1,$ such that
 \eq D^{k}\p=\O^{-\g}\ \ \forall 1\leq k\leq m-1\eeq and set \eq \~{g}_{ij}=\p_{i}\p_{j}+\sigma_{ij},\eeq
 then 
 \eq \~{g}=\O^{0},\eeq 
\eq \label{gtildeandhtilde1} D^{k}\~{g}=\O^{-2\g}\ \ \forall 1\leq k\leq m-2,\eeq 
\eq \label{gtildeandhtilde2} D^{m-1}\~{g} =\O^{-2\g}+D^{m}\p\star\O^{-\g}, \eeq 
\eq \label{gtildeandhtilde3} D^{m}\~{g}=\O^{-2\g}+D^{m}\p\star\O^{-\g}+D^{m+1}\p\star\O^{-\g},\eeq 
\eq \label{gtildeandhtilde4}D^{k}(\~{h}^{i}_{j})=\O^{1+k(1-\g)}\ \ \forall 0\leq k \leq m-3,\eeq 
\eq D^{m-2}(\~{h}^{i}_{j})=\O^{1+(m-2)(1-\g)}+D^{m}\p\star\O^{0},\eeq 
\eq D^{m-1}(\~{h}^{i}_{j})=\O^{1+(m-1)(1-\g)}+D^{m}\p\star\O^{1-\g}+D^{m+1}\p\star\O^{0},\eeq and 
\eq\begin{aligned} \label{gtildeandhtilde5} \~{h}^{l}_{a;i_{1}\ldots i_{m}}&=-v^{-1}\~{g}^{lr}\p_{ra;i_{1}\ldots i_{m}}+v^{-1}\t^{2}\p_{i_{1}\ldots i_{m}}\d^{l}_{a} \\
 									&\hphantom{=}+\O^{1+m(1-\g)}+D^{m}\p\star\O^{1-\g}+D^{m+1}\p\star\O^{1-\g}\\
									&\hphantom{=}+D^{m}\p\star D^{m}\p\star\O^{-\g}. \end{aligned}\eeq 
 \end{lemss}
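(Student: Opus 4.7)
The plan is to deduce every statement by a careful bookkeeping exercise based on the generalized Leibniz and Fa\'a di Bruno rules of Lemma \ref{derivrules}, combined with the estimates for $v$, $u$, $\t^{a}$ already established in Lemmas \ref{higherorderv}--\ref{higherordertheta}. The recurring strategy is to isolate those summands in which a derivative of $\p$ of order $m$ or $m+1$ appears (which the inductive hypothesis does not cover) and to lump everything else into an $\O^{-2\g}$-type tail using $D^{k}\p=\O^{-\g}$ for $1\leq k\leq m-1$.

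First I would treat the estimates on $\~{g}_{ij}=\p_{i}\p_{j}+\sigma_{ij}$. The bound $\~{g}=\O^{0}$ follows at once from $\sigma=\O^{0}$ and $D\p=\O^{-\g}$. For $1\leq k\leq m-2$, Leibniz gives $D^{k}(\p_{i}\p_{j})=\sum_{r+s=k}D^{r+1}\p\star D^{s+1}\p$, and since both orders lie in $[1,m-1]$ each factor is $\O^{-\g}$, which yields (\ref{gtildeandhtilde1}). For $k=m-1$ only the boundary terms $D^{m}\p\star D\p$ fall outside the hypothesis, whence (\ref{gtildeandhtilde2}); the case $k=m$ is identical and additionally produces a $D^{m+1}\p\star\O^{-\g}$ contribution, giving (\ref{gtildeandhtilde3}).

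I would then pass to $\~{h}^{i}_{j}$ via the representation $\~{h}^{l}_{k}=-v^{-1}\~{g}^{lj}\p_{jk}+v^{-1}\dot{\t}\,\d^{l}_{k}$ of (\ref{phiA2}). The inverse $\~{g}^{lj}$ is a smooth function of $\~{g}_{ij}$, so by the chain rule its covariant derivatives satisfy the same bounds as those established for $\~{g}_{ij}$ in (\ref{gtildeandhtilde1})--(\ref{gtildeandhtilde3}). Expanding $D^{k}(\~{h}^{i}_{j})$ by Leibniz and plugging in $D^{k}v=\O^{-2\g}$ from Lemma \ref{higherorderv} and $D^{k}\dot{\t}=\O^{1+k(1-\g)}$ (the analogue of Lemma \ref{higherordertheta} for $\dot{\t}$, as noted in its proof), the only orders of derivative of $\p$ entering when $k\leq m-3$ are $\leq m-1$, so the hypothesis alone gives (\ref{gtildeandhtilde4}); the cases $k=m-2$ and $k=m-1$ produce the stated boundary terms involving $D^{m}\p$ and $D^{m+1}\p$.

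The last identity (\ref{gtildeandhtilde5}) is the delicate one. Differentiating $-v^{-1}\~{g}^{lr}\p_{ra}+v^{-1}\dot{\t}\,\d^{l}_{a}$ exactly $m$ times via Leibniz, the two top contributions come from placing all derivatives either on $\p_{ra}$, producing $-v^{-1}\~{g}^{lr}\p_{ra;i_{1}\ldots i_{m}}$, or on $\dot{\t}$. Since $(\dot{\t})_{i}=\ddot{\t}\,u_{i}=\t^{2}\p_{i}$, iterating Fa\'a di Bruno yields a leading piece $v^{-1}\t^{2}\p_{i_{1}\ldots i_{m}}\d^{l}_{a}$ plus lower-order terms. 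All remaining summands involve lower order derivatives of $v^{-1}$, $\~{g}^{lr}$, $\dot{\t}$ together with at most one factor of $\p$ of order $m$ or $m+1$, or two factors of $\p$ both of order $m$ (stemming from $D^{m-1}\~{g}\cdot D^{2}\p$-type combinations). The main obstacle is nothing more than the bookkeeping of the exponents $1+k(1-\g)$ and $-2\g$ across the many cases; the delicate part is collecting these remainder terms precisely into the four families $\O^{1+m(1-\g)}$, $D^{m}\p\star\O^{1-\g}$, $D^{m+1}\p\star\O^{1-\g}$ and $D^{m}\p\star D^{m}\p\star\O^{-\g}$ stated in (\ref{gtildeandhtilde5}).
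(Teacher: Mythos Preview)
Your proposal is correct and follows essentially the same bookkeeping strategy as the paper. The only notable difference is that the paper works directly with the explicit formula $\~{g}^{lr}=\sigma^{lr}-v^{-2}\p^{l}\p^{r}$ (invoking Lemma~\ref{higherorderv} for the $v^{-2}$ factor) rather than first estimating $\~{g}_{ij}$ and then passing to the inverse via the chain rule; both routes give the same bounds, and the remainder of the argument---the Leibniz expansion of (\ref{phiA2}) together with Lemma~\ref{higherordertheta} for $\dot\t$---is identical.
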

 \vspace{0,01 cm}

\begin{proof}
We have \eq \~{g}^{lr}=\sigma^{lr}-v^{-2}\p^{l}\p^{r}=\O^{0}.\eeq
For all $1\leq k\leq m$ we deduce
\eq \begin{aligned} D^{k}(\~{g}^{lr})&=-\Big(D^{k}(v^{-2})\p^{l}\p^{r}+v^{-2}D^{k}(\p^{l}\p^{r}) \\
							&\hphantom{=}+\sum_{\substack{s+t=k \\ s,t\geq 1}}D^{s}(v^{-2})\star D^{t}(\p^{l}\p^{r})\Big), \end{aligned} \eeq
from which (\ref{gtildeandhtilde1})-(\ref{gtildeandhtilde3}) follow by Lemma \ref{higherorderv}.
There holds \eq \~{h}^{l}_{a}=-v^{-1}(\~{g}^{lr}\p_{ra}-\dot{\t}\d^{l}_{a}).\eeq
For all $1\leq k\leq m$ we have
\eq \begin{aligned} \label{gtildeandhtilde6}
D^{k}(\~{h}^{l}_{a})&=-(D^{k}(v^{-1})\~{g}^{lr}\p_{ra}+v^{-1}D^{k}(\~{g}^{lr})\p_{ra}+v^{-1}\~{g}^{lr}D^{k}(\p_{ra}))\\
			&\hphantom{=}+D^{k}(v^{-1})\dot{\t}\d^{l}_{a}+v^{-1}D^{k}(\dot{\t})\d^{l}_{a}\\
			&\hphantom{=}+\sum_{\substack{j_{1}+j_{2}+j_{3}=k \\ \exists i_{1}\neq i_{2}\colon j_{i_{1}},j_{i_{2}}\geq 1}}D^{j_{1}}(v^{-1})\star D^{j_{2}}(\~{g}^{lr})\star D^{j_{3}}(\p_{ra})\\
			&\hphantom{=}+\sum_{\substack{j_{1}+j_{2}=k \\ j_{1},j_{2}\geq 1}}D^{j_{1}}(v^{-1})\star D^{j_{2}}(\dot{\t})\d^{l}_{a}. \end{aligned}\eeq
In order to prove (\ref{gtildeandhtilde4})-(\ref{gtildeandhtilde5}), we examine (\ref{gtildeandhtilde6}) and use Lemma \ref{higherorderv}, Lemma \ref{higherordertheta} and (\ref{gtildeandhtilde1})-(\ref{gtildeandhtilde3}). First let $1\leq k\leq m-3.$  Then
\eq D^{k}\~{h}^{l}_{a}=\O^{-3\g}+\O^{-\g}+\O^{1-2\g}+\O^{1+k(1-\g)}=\O^{1+k(1-\g)}, \eeq
\eq \begin{aligned} D^{m-2}\~{h}^{l}_{a}&=\O^{-3\g}+D^{m}\p\star\O^{0}+\O^{1-2\g}+\O^{1+(m-2)(1-\g)}\\
						&=\O^{1+(m-2)(1-\g)}+\O^{0}\star D^{m}\p, \end{aligned}\eeq
\eq \begin{aligned} D^{m-1}\~{h}^{l}_{a}&=\O^{-3\g}+D^{m}\p\star\O^{-2\g}+D^{m+1}\p\star\O^{0}+\O^{1-2\g}\\
							&\hphantom{=}+D^{m}\p\star\O^{1-\g}+\O^{1+(m-1)(1-\g)}\\
							&=D^{m}\p\star\O^{1-\g}+D^{m+1}\phi\star\O^{0}+\O^{1+(m-1)(1-\g)} \end{aligned}\eeq
and finally
\eq \begin{aligned} \~{h}^{l}_{a;i_{1}\ldots i_{m}}&=\O^{-3\g}+D^{m}\p\star\O^{-2\g}+D^{m+1}\p\star\O^{-2\g}\\
								&\hphantom{=}-v^{-1}\~{g}^{lr}\p_{ra;i_{1}\ldots i_{m}}+\O^{1-2\g}+D^{m}\p\star\O^{1-\g}\\
								&\hphantom{=}+D^{m+1}\p\star\O^{1-\g}+\O^{1+m(1-\g)} +v^{-1}\t^{2}\p_{i_{1}\ldots i_{m}}\d^{l}_{a}\\
								&\hphantom{=}+D^{m}\p\star D^{m}\p\star \O^{-\g} \\
								&=\O^{1+m(1-\g)}+D^{m}\p\star\O^{1-\g}+D^{m+1}\p\star\O^{1-\g}\\
								&\hphantom{=}+D^{m}\p\star D^{m}\p\star\O^{-\g}- v^{-1}\~{g}^{lr}\p_{ra;i_{1}\ldots i_{m}}\\					&\hphantom{=}+v^{-1}\t^{2}\p_{i_{1}\ldots i_{m}}\d^{l}_{a}. \end{aligned} \eeq
\end{proof}

\begin{lemss} \label{PhicircF}
Let $\p$ be the solution of (\ref{phiScalarFlow}) and suppose, that there exists $0<\g\leq 1,$ such that
 \eq D^{k}\p=\O^{-\g}\ \ \forall 1\leq k\leq m-1,\eeq
  then \eq \mathcal{D}^{\a}\Phi=\O^{-(p+\a)}\ \ \forall 0\leq\a\leq m, \eeq
 where $\mathcal{D}^{\a}$ denotes an arbitrary derivative of order $\a$ with respect to the argument $\~{h}^{l}_{a}.$
\end{lemss}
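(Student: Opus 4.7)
The plan is to apply the generalized chain rule (Faà di Bruno, Lemma \ref{derivrules}) to the composition $\Phi \circ F$ and track the growth orders using the $1$-homogeneity of $F$ together with the fact that $F(\tilde h^l_a) = \mathcal O^1$.

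First I would observe that since $\tilde h^l_a = \tau h^l_a$ and, by Theorem \ref{kappaDecay}, the principal curvatures $\kappa_i$ stay in a fixed compact subset $K \subset \Gamma$, the $1$-homogeneity of $F$ gives
\begin{equation*}
F(\tilde h^l_a) = \tau \, F(h^l_a),
\end{equation*}
so Proposition \ref{curvBound} together with Corollary \ref{thetaGrowth} yields $F(\tilde h^l_a) = \mathcal O^1$ and $F(\tilde h^l_a)^{-1} = \mathcal O^{-1}$. Combined with $\Phi(r) = -r^{-p}$, whose $j$-th real derivative behaves like $r^{-p-j}$, this immediately gives
\begin{equation*}
\Phi^{(j)}(F(\tilde h)) = \mathcal O^{-(p+j)} \qquad \forall\, 0 \le j \le m,
\end{equation*}
which settles the case $\alpha = 0$.

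For $\alpha \ge 1$, I would differentiate $\Phi \circ F$ with respect to $\tilde h^l_a$ via the Faà di Bruno formula \eqref{diBruno},
\begin{equation*}
\mathcal D^\alpha (\Phi \circ F) \;=\; \sum_{k_1+2k_2+\cdots+\alpha k_\alpha=\alpha} c_{\alpha} \;\Phi^{(k_1+\cdots+k_\alpha)}(F) \prod_{i=1}^{\alpha} \bigl(\mathcal D^i F\bigr)^{k_i},
\end{equation*}
and then estimate each factor. The key input is that homogeneity of degree $1$ for $F$ forces $\mathcal D^i F$ to be homogeneous of degree $1-i$ in $\tilde h$; evaluating at the flow tensor $\tilde h^l_a$ — whose magnitude is $\mathcal O^1$ and whose direction stays in the compact set $K \subset \Gamma$ (so that $\mathcal D^i F$ is a smooth, bounded function of that direction) — yields
\begin{equation*}
\mathcal D^i F(\tilde h) = \mathcal O^{1-i} \qquad \forall\, 1 \le i \le \alpha.
\end{equation*}
Substituting these estimates into a generic term of the Faà di Bruno sum, the exponents add up to
\begin{equation*}
-(p + k_1 + \cdots + k_\alpha) + \sum_{i=1}^{\alpha} k_i (1-i) \;=\; -p - \sum_{i=1}^{\alpha} i\, k_i \;=\; -p - \alpha,
\end{equation*}
so every summand is of order $\mathcal O^{-(p+\alpha)}$, which is what we want.

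The only delicate point is justifying the two growth claims $F(\tilde h) = \mathcal O^1$ and $\mathcal D^i F(\tilde h) = \mathcal O^{1-i}$, and this is precisely where the compactness of the curvature set from Proposition \ref{kappaBound}, the asymptotic $\tau \sim \tfrac 12 e^{t/n^p}$ from Corollary \ref{thetaGrowth}, and the $1$-homogeneity of $F$ enter. Once these are in place the rest is a bookkeeping exercise of adding exponents in the Faà di Bruno expansion; no tensorial subtleties arise because $\mathcal D^\alpha$ acts only on the matrix argument of $\Phi$, not on spatial derivatives of $\tilde h^l_a$.
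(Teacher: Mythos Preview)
Your proposal is correct and follows essentially the same approach as the paper: apply Fa\`a di Bruno to $\Phi\circ F$, use $F(\tilde h)=\mathcal O^{1}$, the homogeneity relation $\mathcal D^{i}F=\mathcal O^{1-i}$, and $\Phi^{(j)}(F)=\mathcal O^{-(p+j)}$, then add exponents. One small slip: the compactness of the principal curvatures in $\Gamma$ comes from Proposition~\ref{kappaBound}, not Theorem~\ref{kappaDecay} (you cite the correct reference later, so this is only a typo).
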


\begin{proof}
Di Bruno's formula, (\ref{diBruno}), gives
\eq \mathcal{D}^{\a}\Phi=\sum_{k_{1}+\ldots+\a k_{\a}=\a}c_{\a}\Phi^{(\sum_{i=1}^{\a}k_{i})}(F)\prod_{i=1}^{\a}\left(\frac{\mathcal{D}^{i}F}{i!}\right)^{k_{i}},\eeq
where $\Phi^{(r)}=\frac{d^{r}}{ds^{r}}\Phi(s).$
In view of \eq F=F(\~{h}^{l}_{a})=F(\t h^{l}_{a}), \eeq
we have \eq F=\O^{1}\eeq
and by homogeneity \eq \mathcal{D}^{i}F=\O^{1-i},\eeq
as well as \eq \Phi^{(r)}(F)=\O^{-(p+r)}.\eeq
Thus \eq \prod_{i=1}^{\a}\left(\frac{\mathcal{D}^{i}F}{i!}\right)^{k_{i}}=\prod_{i=1}^{\a}\frac{\O^{k_{i}-ik_{i}}}{i!}=\O^{\sum_{i=1}^{\a}k_{i}-\sum_{i=1}^{\a}ik_{i}},\eeq
which implies 
\eq \mathcal{D}^{\a}\Phi=\sum_{k_{1}+\ldots+\a k_{\a}=\a}\O^{-(p+\sum_{i=1}^{\a}k_{i})}\star\O^{\sum_{i=1}^{\a}k_{i}-\sum_{i=1}^{\a}ik_{i}}=\O^{-(p+\a)}.\eeq
\end{proof}

\begin{lemss} \label{higherorderPhi}
Let $\p$ be the solution of (\ref{phiScalarFlow}) and suppose, that there exists $0<\g\leq 1,$ such that
 \eq D^{k}\p=\O^{-\g}\ \ \forall 1\leq k\leq m-1,\eeq
then 
\eq D^{k}\Phi=\O^{k(1-\g)-p}\ \ \forall 0\leq k\leq m-3,\eeq
\eq \ D^{m-2}\Phi=\O^{(m-2)(1-\g)-p}+D^{m}\p\star \O^{-(p+1)},\eeq
\eq \begin{aligned} D^{m-1}\Phi&=\O^{(m-1)(1-\g)-p}+D^{m}\p\star\O^{-(p+\g)}\\
				&\hphantom{=}+D^{m}\p\star D^{m}\p\star\O^{-(p+2)}+D^{m+1}\p\star\O^{-(p+1)} \end{aligned} \eeq and
\eq  \begin{aligned} \label{higherorderPhi1}
 \Phi_{i_{1}\ldots i_{m}}&=-\dot{\Phi}v^{-1}F^{a}_{l}\~{g}^{lr}\p_{ra;i_{1}\ldots i_{m}}+\dot{\Phi}v^{-1}F^{a}_{l}\t^{2}\p_{i_{1}\ldots i_{m}}\d^{l}_{a}\\
				&\hphantom{=}+\O^{m(1-\g)-p}+D^{m}\p\star\O^{1-2\g-p}\\
				&\hphantom{=}+D^{m}\p\star D^{m}\p\star\O^{-(p+1+\g)}+D^{m+1}\p\star\O^{-(p+\g)}\\
				&\hphantom{=}+D^{m}\p\star D^{m+1}\p\star\O^{-(p+2)}\\
				&\hphantom{=}+D^{m}\p\star D^{m}\p\star D^{m}\p\star\O^{-(p+3)}. \end{aligned}\eeq 									
\end{lemss}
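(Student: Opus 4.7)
The approach I would take is to apply Fa\`a di Bruno's formula from Lemma \ref{derivrules} to the composition $\Phi=\Phi(F(\~{h}^{l}_{a}))$. This expands $D^{k}\Phi$ as a finite sum over partitions $\beta_{1}+\ldots+\beta_{\alpha}=k$, each summand being a product of one factor $\mathcal{D}^{\alpha}\Phi$ times $\alpha$ factors of the form $D^{\beta_{j}}\~{h}$. By Lemma \ref{PhicircF} every $\mathcal{D}^{\alpha}\Phi$ equals $\O^{-(p+\alpha)}$, and by Lemma \ref{gtildeandhtilde} each $D^{\beta_{j}}\~{h}$ has a clean $\O$-bound when $\beta_{j}\leq m-3$, and an $\O$-bound together with explicit $D^{m}\p$ and $D^{m+1}\p$ contributions when $\beta_{j}\in\{m-2,m-1,m\}$. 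The proof is then a careful distribution of these estimates over all partitions.

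In the first case $0\leq k\leq m-3$, every part $\beta_{j}$ is automatically bounded by $m-3$, so I would simply multiply $\O^{-(p+\alpha)}\star\prod_{j}\O^{1+\beta_{j}(1-\g)}$ and sum exponents, obtaining $k(1-\g)-p$. For $k=m-2$ and $k=m-1$ the reasoning is the same, with the additional step of singling out those partitions containing a part $\beta_{j}\in\{m-2,m-1\}$: for each such part I split the corresponding $D^{\beta_{j}}\~{h}$ into its $\O$-summand and its $D^{m}\p$ and $D^{m+1}\p$ summands, multiply through, and sort the results by the type of $\p$-factor produced. The stated bounds then follow once one retains, for each $\p$-factor type, the worst $\O$-exponent attained across all contributing partitions.

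For $D^{m}\Phi$ the leading contribution comes from the one-part partition $\beta_{1}=m$ and equals $\dot{\Phi}F^{a}_{l}\~{h}^{l}_{a;i_{1}\ldots i_{m}}$. Into this I would substitute the explicit decomposition \re{gtildeandhtilde5} from Lemma \ref{gtildeandhtilde}: the two top-order terms $-\dot{\Phi}v^{-1}F^{a}_{l}\~{g}^{lr}\p_{ra;i_{1}\ldots i_{m}}$ and $\dot{\Phi}v^{-1}F^{a}_{l}\t^{2}\p_{i_{1}\ldots i_{m}}\d^{l}_{a}$ reproduce the two explicit terms of \re{higherorderPhi1}, while the remaining $\O$-, $D^{m}\p\star\O$-, $D^{m+1}\p\star\O$- and $D^{m}\p\star D^{m}\p\star\O$-summands of \re{gtildeandhtilde5} are each multiplied by the prefactor $\dot{\Phi}F^{a}_{l}=\O^{-(p+1)}$ and drop into the corresponding buckets. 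Partitions with $\alpha\geq 2$ supply the remaining exponents: the $D^{m}\p\star\O^{1-2\g-p}$ term arises for instance from the $(m-2,2)$-partition, the cross-term $D^{m}\p\star D^{m+1}\p\star\O^{-(p+2)}$ from partitions combining a $D^{m-1}\~{h}$-factor (contributing $D^{m+1}\p$) with a factor contributing $D^{m}\p$, and the cubic $D^{m}\p\star D^{m}\p\star D^{m}\p\star\O^{-(p+3)}$ from partitions where three parts simultaneously carry a $D^{m}\p$-factor. I would verify case-by-case that the worst exponent achieved across all admissible partitions matches the stated one.

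The principal obstacle I anticipate is purely combinatorial: a complete enumeration of the partitions of $m-2$, $m-1$ and $m$ together with a correct accounting of which high-order $\p$-factors each part can contribute. There is no conceptual difficulty once the bookkeeping is set up, and the $\O^{\e}$ formalism is designed precisely to automate the arithmetic of exponents; however one must be careful since some of the mixed terms (for instance $D^{m}\p\star D^{m}\p\star\O^{-(p+2)}$ in $D^{m-1}\Phi$, or $D^{m}\p\star D^{m+1}\p\star\O^{-(p+2)}$ in $D^{m}\Phi$) are genuinely realized only for small $m$, so the statement is to be read as a uniform upper bound valid for every $m\geq 3$ rather than as a list of terms all of which necessarily occur.
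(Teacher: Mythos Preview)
Your proposal is correct and follows essentially the same route as the paper: apply Fa\`a di Bruno to $\Phi(F(\~{h}^{l}_{a}))$, plug in $\mathcal{D}^{\alpha}\Phi=\O^{-(p+\alpha)}$ from Lemma \ref{PhicircF} and the $D^{i}\~{h}$-estimates from Lemma \ref{gtildeandhtilde}, and then sort the resulting products over all partitions according to which high-order $\p$-factors they carry. The paper's own proof is precisely this bookkeeping written out case by case.
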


\begin{proof}
We consider $\Phi(\~{h}^{i}_{j})\equiv\Phi(F(\~{h}^{i}_{j})).$
\eq D^{\b}\Phi=\sum_{k_{1}+\ldots+\b k_{\b}=\b}c_{\b}\mathcal{D}^{\sum_{i=1}^{\b}k_{i}}\Phi(\~{h}^{l}_{a})\prod_{i=1}^{\b}\left(\frac{D^{i}\~{h}^{l}_{a}}{i!}\right)^{k_{i}}. \eeq
We consider the different cases separately and use Lemma \ref{gtildeandhtilde} and Lemma \ref{PhicircF} to obtain, that
if $\b\leq m-3,$ then
\eq D^{\b}\Phi=\O^{-(p+\sum_{i=1}^{\b}k_{i})}\star\O^{\sum_{i=1}^{\b}k_{i}+\sum_{i=1}^{\b}ik_{i}(1-\g)}=\O^{\b(1-\g)-p}.\eeq
If $\b=m-2,$ then \eq\begin{aligned}  D^{\b}\Phi&=\sum_{k_{1}+\ldots \b k_{\b}=\b}\O^{-(p+\sum_{i=1}^{\b}k_{i})}\star\prod_{i=1}^{\b-1}\left(\frac{D^{i}\~{h}^{l}_{a}}{i!}\right)^{k_{i}}\\
&\hphantom{=}\star (\O^{1+(m-2)(1-\g)}+D^{m}\p\star\O^{0})^{k_{m-2}}\\
&=\O^{\b(1-\g)-p}+D^{m}\p\star\O^{-(p+1)}.   \end{aligned} \eeq
For $\b=m-1$ we get
\eq \begin{aligned} D^{\b}\Phi&=\sum_{k_{1}+\ldots+\b k_{\b}=\b}\O^{-(p+\sum_{i=1}^{\b}k_{i})}\star\prod_{i=1}^{\b-2}\left(\frac{D^{i}\~{h}^{l}_{a}}{i!}\right)^{k_{i}}\\
		&\hphantom{=}\star(\O^{1+(m-2)(1-\g)}+D^{m}\p\star\O^{0})^{k_{m-2}}\\
		&\hphantom{=}\star (\O^{1+(m-1)(1-\g)}+D^{m}\p\star\O^{1-\g}+D^{m+1}\p\star\O^{0})^{k_{m-1}}\\
		&=\O^{\b(1-\g)-p}+D^{m}\p\star\O^{-(p+\g)}+D^{m}\p\star D^{m}\p\star\O^{-(p+2)}\\
		&\hphantom{=}+D^{m+1}\p\star \O^{-(p+1)}. \end{aligned} \eeq
In order to prove (\ref{higherorderPhi1}), we calculate \eq \begin{aligned}
\Phi_{i_{1}\ldots i_{m}}&=\sum_{k_{1}+\ldots mk_{m}=m}\frac{m!}{k_{1}!\cdots k_{m}!}\Phi^{(\sum_{i=1}^{m}k_{i})}\prod_{i=1}^{m-3}\left(\frac{D^{i}\~{h}^{l}_{a}}{i!}\right)^{k_{i}}\\
		&\hphantom{=}\star(\O^{1+(m-2)(1-\g)}+D^{m}\p\star\O^{0})^{k_{m-2}}\\
		&\hphantom{=}\star(\O^{1+(m-1)(1-\g)}+D^{m}\p\star\O^{1-\g}+D^{m+1}\p\star\O^{0})^{k_{m-1}}\\
		&\hphantom{=}\star\Big(-\frac{1}{m!}v^{-1}\~{g}^{lr}\p_{ra;i_{1}\ldots i_{m}}+v^{-1}\t^{2}\p_{i_{1}\ldots i_{m}}\d^{l}_{a}\\
		&\hphantom{=}+\O^{1+m(1-\g)}+D^{m}\p\star\O^{1-\g}+D^{m+1}\p\star\O^{1-\g}\\
		&\hphantom{=}+D^{m}\p\star D^{m}\p\star\O^{-\g}\Big)^{k_{m}}.\end{aligned}\eeq
Thus \eq\begin{aligned}
		\Phi_{i_{1}\ldots i_{m}}&=-\dot{\Phi}F^{a}_{l}v^{-1}\~{g}^{lr}\p_{ra;i_{1}\ldots i_{m}}+\dot{\Phi}F^{a}_{l}v^{-1}\t^{2}\p_{i_{1}\ldots i_{m}}\d^{l}_{a}\\
		&\hphantom{=}+\O^{m(1-\g)-p}+D^{m}\p\star\O^{-(p+\g)}+D^{m+1}\p\star\O^{-(p+\g)}\\
		&\hphantom{=}+D^{m}\p\star D^{m}\p\star\O^{-(p+1+\g)}+\O^{m(1-\g)-p}\\
		&\hphantom{=}+D^{m}\p\star\O^{1-2\g-p}+D^{m+1}\p\star\O^{-(p+\g)}\\
		&\hphantom{=}+D^{m}\p\star D^{m}\p\star\O^{-(p+1+\g)}+D^{m}\p\star D^{m+1}\p\star\O^{-(p+2)}\\
		&\hphantom{=}+\O^{m(1-\g)-p}+D^{m}\p\star\O^{1-2\g-p}+D^{m}\p\star D^{m}\p\star\O^{-(p+2)}\\
		&\hphantom{=}+\O^{m(1-\g)-p}+D^{m}\p\star\O^{1-2\g-p}+\O^{m(1-\g)-p}\\
		&\hphantom{=}+D^{m}\p\star\O^{1-2\g-p}+D^{m}\p\star D^{m}\p\star\O^{-(p+1+\g)}\\
		&\hphantom{=}+D^{m}\p\star D^{m}\p\star D^{m}\p\star \O^{-(p+3)},\end{aligned} \eeq
so that finally\eq \begin{aligned}
		\Phi_{i_{1}\ldots i_{m}}&=-\dot{\Phi}F^{a}_{l}v^{-1}\~{g}^{lr}\p_{ra;i_{1}\ldots i_{m}}+\dot{\Phi}F^{a}_{l}v^{-1}\t^{2}\p_{i_{1}\ldots i_{m}}\d^{l}_{a}\\
		&\hphantom{=}+\O^{m(1-\g)-p}+D^{m}\p\star\O^{1-2\g-p}\\
		&\hphantom{=}+D^{m}\p\star D^{m}\p\star\O^{-(p+1+\g)}+D^{m+1}\p\star\O^{-(p+\g)}\\
		&\hphantom{=}+D^{m}\p\star D^{m+1}\p\star\O^{-(p+2)}\\
		&\hphantom{=}+D^{m}\p\star D^{m}\p\star D^{m}\p\star\O^{-(p+3)}. \end{aligned} \eeq
\end{proof}

\begin{lemss} \label{RicciId}
For a function \eq \p\colon \S^{n}\rightarrow\R \eeq
there holds \eq \p_{rk;i_{1}\dots i_{m}}=\p_{i_{1}\ldots i_{m};rk}+D^{m}\p\star\O^{0}.\eeq
\end{lemss}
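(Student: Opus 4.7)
The plan is to view the identity as an iterated application of the Ricci commutation formula on the round sphere, where all curvature tensors are parallel and bounded, hence $\O^{0}$.

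First I would recall that for any $(0,p)$-tensor $T_{j_{1}\ldots j_{p}}$ on a Riemannian manifold one has the Ricci identity
\eq T_{j_{1}\ldots j_{p};ab}-T_{j_{1}\ldots j_{p};ba}=-\sum_{q=1}^{p}R^{m}_{\hp{m}j_{q}ab}T_{j_{1}\ldots m\ldots j_{p}},\eeq
where $R$ is the Riemann tensor of the ambient manifold. On $\S^{n}$ with its canonical metric $\sigma_{ij}$ we have $R_{ijkl}=\sigma_{ik}\sigma_{jl}-\sigma_{il}\sigma_{jk}$, so $R=\O^{0}$, and moreover $\nabla R\equiv 0$, so all covariant derivatives of $R$ are $\O^{0}$ as well.

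Next I would perform a sequence of adjacent index swaps to move the pair $r,k$ from the front of the index list to the back. Schematically one writes
\eq \p_{;rki_{1}\ldots i_{m}}=\p_{;ri_{1}ki_{2}\ldots i_{m}}-\big(R^{l}_{\hp{l}rki_{1}}\p_{;l}\big)_{;i_{2}\ldots i_{m}},\eeq
and then continues commuting $k$ (and afterwards $r$) past each $i_{s}$ in turn, each step using the Ricci identity applied to the tensor obtained so far. Since $\p$ itself is a function, the commutator at the very first pair of derivatives is zero; it is only when we commute an index past indices that already sit to its right that curvature terms appear.

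Expanding each correction term with the product rule and Lemma \ref{derivrules} gives a finite linear combination of contractions of the form
\eq (\text{covariant derivative of }R)\star D^{s}\p,\qquad 1\leq s\leq m,\eeq
and by the observation above every such coefficient is $\O^{0}$. After bringing $r,k$ to the end of the list we therefore obtain
\eq \p_{rk;i_{1}\ldots i_{m}}=\p_{i_{1}\ldots i_{m};rk}+\sum_{s=1}^{m}\O^{0}\star D^{s}\p,\eeq
which is exactly the stated $D^{m}\p\star\O^{0}$ (absorbing the lower order terms into the same symbol, following the convention that $D^{m}\p\star\O^{0}$ stands for an arbitrary linear combination of contractions of derivatives of $\p$ of order at most $m$ with bounded tensors).

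The only real issue is pure index bookkeeping: verifying that no term of order $m+1$ or higher in $\p$ can appear. This is automatic because each commutation removes exactly two derivatives from $\p$ and replaces them by curvature, so a tensor of order $m+2$ becomes a sum of $m$-th order (or lower) derivatives of $\p$ multiplied by curvature (or its derivatives).
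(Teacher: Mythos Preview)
Your approach is essentially the same as the paper's: both move the pair $r,k$ to the right through repeated applications of the Ricci identity, using that on $\S^{n}$ the curvature tensor is parallel and bounded. The paper just organizes the induction slightly differently, shifting $i_{j}$ into the $j$-th slot (equivalently, moving the block $rk$ one position to the right at each step) rather than moving $k$ all the way through and then $r$.

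One small correction: your final interpretation of $D^{m}\p\star\O^{0}$ as ``derivatives of order at most $m$'' is not the paper's convention; by Definition~5.1(1) the symbol means contractions of exactly $D^{m}\p$ with a bounded tensor. Fortunately your own observation that $\nabla R\equiv 0$ already forces every correction term $(R\star D^{j}\p)_{;i_{j+2}\ldots i_{m}}$ to equal $R\star D^{m}\p$, so no lower-order terms actually appear and the sum $\sum_{s=1}^{m}\O^{0}\star D^{s}\p$ collapses to the single term $\O^{0}\star D^{m}\p$. With that remark the proof is complete and matches the paper.
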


\begin{proof}
We shift $i_{j}$ into the $j$-th position inductively. For $j=1$ we have
\eq \begin{aligned} \p_{rk;i_{1}\ldots i_{m}}&=\p_{ri_{1};ki_{2}\ldots i_{m}}+({R^{s}}_{rki_{1}}\p_{s})_{;i_{2}\ldots i_{m}}\\
								&=\p_{ri_{1}ki_{2}\ldots i_{m}}+(\d^{s}_{k}\sigma_{ri_{1}}-\d^{s}_{i_{1}}\sigma_{rk})\p_{si_{2}\ldots i_{m}}\\
								&=\p_{i_{1}rki_{2}\ldots i_{m}}+D^{m}\p\star \O^{0}. \end{aligned} \eeq
	Suppose inductively
\eq \p_{rk;i_{1}\ldots i_{m}}=\p_{i_{1}\ldots i_{j}rki_{j+1}\ldots i_{m}}+D^{m}\p\star\O^{0}, \eeq
then
\eq \begin{aligned} \p_{rki_{1}\ldots i_{m}}&=\p_{i_{1}\ldots i_{j}ri_{j+1}ki_{j+2}\ldots i_{m}}\\
							&\hphantom{=}+\left(\sum_{l=1}^{j}{R^{s_{l}}}_{i_{l}ki_{j+1}}\p_{i_{1}\ldots i_{l-1}s_{l}i_{l+1}\ldots i_{j}r}\right)_{;i_{j+2}\ldots i_{m}}\\
							&\hphantom{=}+({R^{s}}_{rki_{j+1}}\p_{i_{1}\ldots i_{j}s})_{;i_{j+2}\ldots i_{m}}+D^{m}\p\star\O^{0}\\
							&=\p_{i_{1}\ldots i_{j}ri_{j+1}ki_{j+2}\ldots i_{m}}+D^{m}\p\star\O^{0} \end{aligned} \eeq
	and analogously for exchanging $r$ and $i_{j+1}.$								
\end{proof}

\begin{lemss} \label{scalarflowderivatives}
Let $\p$ be the solution of (\ref{phiScalarFlow}) and suppose, that there exists $0<\g\leq 1,$ such that
 \eq D^{k}\p=\O^{-\g}\ \ \forall 1\leq k\leq m-1,\eeq then the functions \eq z=\frac 12 \abs{D^{m-1}\p}^{2}=\frac 12\p_{i_{1}\ldots i_{m-1}}\p^{i_{1}\ldots i_{m-1}}\eeq
 and \eq w=\frac 12\abs{D^{m}\p}^{2}=\frac 12 \p_{i_{1}\ldots i_{m}}\p^{i_{1}\ldots i_{m}}\eeq
 satisfy
 \eq \begin{aligned} \label{scalarflowderivatives1} \dot{z}-\t^{p-1}\dot{\Phi}F^{ar}z_{ar}&=-\t^{p-1}\dot{\Phi}F^{ar}\p_{i_{1}\ldots i_{m-1};r}{\p^{i_{1}\ldots i_{m-1}}}_{;a}\\
 									&\hphantom{=}+\O^{-(1+\g)+(m-1)(1-\g)} \\
 								&\hphantom{=}+D^{m}\p\star\O^{-(1+2\g)}+D^{m}\p\star D^{m}\p\star\O^{-(3+\g)} \end{aligned} \eeq and					
\eq \begin{aligned} \label{scalarflowderivatives2} \dot{w}-\t^{p-1}\dot{\Phi}F^{ar}w_{ar}&=-2(p-1)\t^{p-1}\dot{\t}v\Phi w-2\t^{p+1}\dot{\Phi}F^{a}_{a}w\\
				&\hphantom{=}-\t^{p-1}\dot{\Phi}F^{ar}\p_{i_{1}\ldots i_{m};r}{\p^{i_{1}\ldots i_{m}}}_{;a}\\
				&\hphantom{=}+D^{m}\p\star\O^{-1+m(1-\g)}+D^{m}\p\star D^{m}\p\star\O^{-2\g}\\
				&\hphantom{=}+D^{m}\p\star D^{m}\p\star D^{m}\p\star\O^{-(2+\g)}\\
				&\hphantom{=}+D^{m}\p\star D^{m}\p\star D^{m}\p\star D^{m}\p\star\O^{-4}\\
				&\hphantom{=}+D^{m}\p\star D^{m+1}\p\star\O^{-(1+\g)}\\
				&\hphantom{=}+D^{m}\p\star D^{m}\p\star D^{m+1}\p\star\O^{-3}. \end{aligned}\eeq				
\end{lemss}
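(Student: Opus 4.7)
The plan is to take the time derivatives $\dot z=\dot\p_{i_{1}\ldots i_{m-1}}\p^{i_{1}\ldots i_{m-1}}$ and $\dot w=\dot\p_{i_{1}\ldots i_{m}}\p^{i_{1}\ldots i_{m}}$ (legitimate because $\sigma_{ij}$ is time-independent), to substitute the scalar flow equation (\ref{phiScalarFlow5}) for $\dot\p$, to expand the resulting $D^{m-1}(\t^{p-1}v\Phi)$ and $D^{m}(\t^{p-1}v\Phi)$ using the generalised Leibniz formula of Lemma \ref{derivrules} together with the $D^{k}$-expansions of $\t^{p-1}$, $v$ and $\Phi$ from Lemmas \ref{higherorderv}, \ref{higherordertheta} and \ref{higherorderPhi}, and finally to reassemble the top-order terms, with the help of the Ricci identity of Lemma \ref{RicciId}, as the parabolic expressions $\t^{p-1}\dot\Phi F^{ar}z_{ar}$ and $\t^{p-1}\dot\Phi F^{ar}w_{ar}$ appearing on the left-hand side, so that everything left over falls into one of the $\O^{\e}$ error classes claimed.

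For $w$ there are three explicit leading contributions. First, the piece $(p-1)\t^{p-1}\dot\t\,\p_{i_{1}\ldots i_{m}}$ of the formula for $(\t^{p-1})_{i_{1}\ldots i_{m}}$ in Lemma \ref{higherordertheta}, when multiplied by $-v\Phi$ and contracted with $\p^{i_{1}\ldots i_{m}}$, produces the coefficient $-2(p-1)\t^{p-1}\dot\t v\Phi$ in front of $w$. Second, the piece $\dot\Phi v^{-1}F^{a}_{l}\t^{2}\p_{i_{1}\ldots i_{m}}\d^{l}_{a}$ of $\Phi_{i_{1}\ldots i_{m}}$ in Lemma \ref{higherorderPhi}, when multiplied by $-\t^{p-1}v$ and contracted, produces $-2\t^{p+1}\dot\Phi F^{a}_{a}w$. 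Third, the piece $-\dot\Phi v^{-1}F^{a}_{l}\~{g}^{lr}\p_{ra;i_{1}\ldots i_{m}}$, multiplied by $-\t^{p-1}v$ and contracted with $\p^{i_{1}\ldots i_{m}}$, yields $\t^{p-1}\dot\Phi F^{ar}\p_{ra;i_{1}\ldots i_{m}}\p^{i_{1}\ldots i_{m}}$ where $F^{ar}=F^{a}_{l}\~{g}^{lr}$. Commuting the first two covariant derivatives past the remaining $m$ via Lemma \ref{RicciId} gives $\t^{p-1}\dot\Phi F^{ar}\p_{i_{1}\ldots i_{m};ra}\p^{i_{1}\ldots i_{m}}$ up to a $D^{m}\p\star D^{m}\p\star\O^{0}$ remainder, and combining with $w_{;ar}=\p_{i_{1}\ldots i_{m};ar}\p^{i_{1}\ldots i_{m}}+\p_{i_{1}\ldots i_{m};a}{\p^{i_{1}\ldots i_{m}}}_{;r}$ leaves exactly the quadratic term $-\t^{p-1}\dot\Phi F^{ar}\p_{i_{1}\ldots i_{m};r}{\p^{i_{1}\ldots i_{m}}}_{;a}$ on the right-hand side.

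For $z$ the calculation is analogous but uses one fewer derivative; the analogues of the first two leading contributions now involve $\p_{i_{1}\ldots i_{m-1}}=\O^{-\g}$ (already bounded by hypothesis) and are therefore absorbed into the single error term $\O^{-(1+\g)+(m-1)(1-\g)}$, while the parabolic term and the gradient-squared remainder are extracted in the same way as in the $w$-case. All other summands arising from the Fa\'a di Bruno and Leibniz expansions, in which no single factor reaches top order, fall into one of the $\O^{\e}$ classes with the exponents prescribed by Lemmas \ref{higherorderv}--\ref{higherorderPhi}, and a routine algebra of exponents produces the listed error terms in (\ref{scalarflowderivatives1}) and (\ref{scalarflowderivatives2}).

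The main obstacle is purely bookkeeping: one must verify that every term produced by the generalised product and chain rules falls into one of the $\O^{\e}$ classes with the correct exponent. Since all inputs $D^{k}\p$, $D^{k}v$, $D^{k}\t^{p-1}$ and $D^{k}\Phi$ carry sharp $\O$-estimates from the preceding lemmas, no new analytic idea is required; the work is a careful but mechanical exponent count.
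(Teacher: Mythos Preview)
Your proposal is correct and follows essentially the same route as the paper's proof: differentiate the scalar flow equation $\dot\p=-\t^{p-1}v\Phi$ to order $m-1$ and $m$, expand via the generalised Leibniz rule, plug in the expansions of $D^{k}\t^{p-1}$, $D^{k}v$ and $D^{k}\Phi$ from Lemmas \ref{higherorderv}, \ref{higherordertheta} and \ref{higherorderPhi}, extract the three top-order pieces for $w$ (and their lower-order analogues for $z$), and then use Lemma \ref{RicciId} together with the product formula for $w_{ar}$ (resp.\ $z_{ar}$) to recast the $(m+2)$-nd (resp.\ $(m+1)$-st) order piece as the parabolic operator plus the gradient-squared term. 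The paper also singles out the contribution $-\t^{p-1}v^{-1}\p_{ai_{1}\ldots i_{m}}\p^{a}\Phi$ coming from the top-order part of $v_{i_{1}\ldots i_{m}}$ (Lemma \ref{higherorderv}), which after contraction lands in $D^{m}\p\star D^{m+1}\p\star\O^{-(1+\g)}$; you subsume this under your catch-all, which is fine.
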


\begin{proof}
$\p$ satisfies \eq \dot{\p}=-\t^{p-1}v\Phi\ \mathrm{on}\ [0,\infty)\times \S^{n}. \eeq
Differentiating covariantly with respect to $\sigma_{ij}$ gives
\eq \begin{aligned} \dot{\p}_{i_{1}\ldots i_{k}}&=-(\t^{p-1})_{i_{1}\ldots i_{k}}v\Phi-\t^{p-1}v_{i_{1}\ldots i_{k}}\Phi-\t^{p-1}v\Phi_{i_{1}\ldots i_{k}}\\
									&\hphantom{=}+\sum_{\substack{j_{1}+j_{2}+j_{3}=k\\ \exists s\neq t\colon j_{s},j_{t}\neq 0}}D^{j_{1}}(\t^{p-1})\star D^{j_{2}}v\star D^{j_{3}}\Phi. \end{aligned}\eeq
In order to prove (\ref{scalarflowderivatives1}), we consider $k=m-1$ and obtain	
\eq \begin{aligned} \dot{\p}_{i_{1}\ldots i_{m-1}}&=\O^{-1+(m-1)(1-\g)}+D^{m}\p\star\O^{-(1+\g)}\\
				&\hphantom{=}+D^{m}\p\star D^{m}\p\star\O^{-3}+\t^{p-1}\dot{\Phi}F^{a}_{l}\~{g}^{lr}\p_{ra;i_{1}\ldots i_{m-1}}. \end{aligned}\eeq	
There holds \eq z_{ra}=\p_{i_{1}\ldots i_{m-1}ra}\p^{i_{1}\ldots i_{m-1}}+\p_{i_{1}\ldots i_{m-1}r}{\p^{i_{1}\ldots i_{m-1}}}_{;a} \eeq
and thus
\eq \begin{aligned} \p_{ra;i_{1}\ldots i_{m-1}}\p^{i_{1}\ldots i_{m-1}}&=\p_{i_{1}\ldots i_{m-1};ra}\p^{i_{1}\ldots i_{m-1}}\\
									&\hphantom{=}+D^{m-1}\p\star D^{m-1}\p\star \O^{0}\\
											&=z_{ra}-\p_{i_{1}\ldots i_{m-1};r}{\p^{i_{1}\ldots i_{m-1}}}_{;a}\\
											&\hphantom{=}+D^{m-1}\p\star D^{m-1}\p\star\O^{0}.
											\end{aligned}\eeq	
We conclude, that
\eq \begin{aligned} \dot{z}-\t^{p-1}\dot{\Phi}F^{a}_{l}\~{g}^{lr}z_{ra}&=-\t^{p-1}\dot{\Phi}F^{a}_{l}\~{g}^{lr}\p_{i_{1}\ldots i_{m-1};r}{\p^{i_{1}\ldots i_{m-1}}}_{;a}\\
											&\hphantom{=}+\O^{-(1+\g)+(m-1)(1-\g)}+D^{m}\p\star\O^{-(1+2\g)}\\
											&\hphantom{=}+D^{m}\p\star D^{m}\p\star\O^{-(3+\g)}.
											\end{aligned}\eeq
To prove (\ref{scalarflowderivatives2}), set $k=m$ to obtain
\eq \begin{aligned}
\dot{\p}_{i_{1}\ldots i_{m}}&=-(p-1)\t^{p-1}\dot{\t}\p_{i_{1}\ldots i_{m}}v\Phi-\t^{p-1}v^{-1}\p_{ai_{1}\ldots i_{m}}\p^{a}\Phi\\
					&\hphantom{=}+\O^{-1+m(1-\g)}+D^{m}\p\star\O^{-2\g}+\t^{p-1}\dot{\Phi}F^{ar}\p_{ra;i_{1}\dots i_{m}}\\
					&\hphantom{=}-\t^{p+1}\dot{\Phi}F^{a}_{a}\p_{i_{1}\ldots i_{m}}	+D^{m}\p\star D^{m}\p\star\O^{-(2+\g)}\\
					&\hphantom{=}+D^{m+1}\p\star \O^{-(1+\g)}+D^{m}\p\star D^{m+1}\p\star\O^{-3}\\
					&\hphantom{=}+D^{m}\p\star D^{m}\p\star D^{m}\p\star\O^{-4}. \end{aligned}\eeq 
As above we have
\eq w_{ra}=\p_{i_{1}\ldots i_{m};ra}\p^{i_{1}\ldots i_{m}}+\p_{i_{1}\ldots i_{m};r}{\p^{i_{1}\ldots i_{m}}}_{;a}, \eeq 
\eq \begin{aligned} \p_{ra;i_{1}\ldots i_{m}}\p^{i_{1}\ldots i_{m}}&=\p_{i_{1}\ldots i_{m};ra}\p^{i_{1}\ldots i_{m}}+D^{m}\p\star D^{m}\p\star \O^{0}\\
									&=w_{ra}-\p_{i_{1}\ldots i_{m};r}{\p^{i_{1}\ldots i_{m}}}_{;a}+D^{m}\p\star D^{m}\p\star \O^{0} \end{aligned}\eeq
and thus
\eq \begin{aligned}
\dot{w}-\t^{p-1}\dot{\Phi}F^{ar}w_{ar}&=-2(p-1)\t^{p-1}\dot{\t}v\Phi w+D^{m}\p\star \O^{-1+m(1-\g)}\\
						&\hphantom{=}+D^{m}\p\star D^{m+1}\p\star \O^{-(1+\g)}\\
						&\hphantom{=}+D^{m}\p\star D^{m}\p\star \O^{-2\g}\\
						&\hphantom{=}-\t^{p-1}\dot{\Phi}F^{ar}\p_{i_{1}\ldots i_{m};r}{\p^{i_{1}\ldots i_{m}}}_{;a}-2\t^{p+1}\dot{\Phi}F^{a}_{a} w\\
						&\hphantom{=}+D^{m}\p\star D^{m}\p\star D^{m}\p\star \O^{-(2+\g)}\\
						&\hphantom{=}+D^{m}\p\star D^{m}\p\star D^{m+1}\p\star \O^{-3}\\
						&\hphantom{=}+D^{m}\p\star D^{m}\p\star D^{m}\p\star D^{m}\p\star \O^{-4}. \end{aligned}\eeq
\end{proof}

\begin{thmss} \label{phiderivativedecay}
Let $\p$ be the solution of (\ref{phiScalarFlow}), then
\eq D^{m}\p=\O^{-\g}\ \ \forall m\in \mathbb{N}^{*}\ \ \forall 0\leq\g<1. \eeq
\end{thmss}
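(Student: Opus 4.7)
I proceed by strong induction on $m$. The base cases $m=1,2$ follow from Theorem \ref{optGradDecay} (via $|D\varphi|^2=(v^2-1)/\vartheta^2$, giving $D\varphi=\O^{-2}$) and Theorem \ref{phi2DerDecay} (giving $D^2\varphi=\O^{-1}$), both of which satisfy the claim for every $\gamma<1$. For the inductive step, fix $m\geq 3$ and assume $D^k\varphi=\O^{-\gamma}$ for all $\gamma<1$ and $1\leq k\leq m-1$. Given a target $\gamma_*\in[0,1)$, choose $\gamma_0\in\bigl[\max\{\gamma_*,\,1-(1-\gamma_*)/m\},\,1\bigr)$; the induction hypothesis then supplies $D^k\varphi=\O^{-\gamma_0}$ for $1\leq k\leq m-1$, so Lemma \ref{scalarflowderivatives} applies and yields the evolution equation \eqref{scalarflowderivatives2} for $w:=\tfrac12|D^m\varphi|^2$.

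The heart of the argument is an asymptotic identification of the dominant coefficient of $w$ on the right-hand side of \eqref{scalarflowderivatives2}. Using $\Phi=-F^{-p}$, $\dot\Phi=pF^{-(p+1)}$, the asymptotics $F(\~{h})=n\vartheta+\O^{-1}$ (from $\~{h}^i_j=\vartheta h^i_j$ and Theorem \ref{kappaDecay}), $\dot\vartheta/\vartheta=\coth u=1+\O^{-2}$ and $v=1+\O^{-2}$ (from Theorem \ref{optGradDecay}), together with Euler's identity $F^a_l\,\~{h}^l_a=F$ (which gives $\vartheta F^a_a=F+\O^{-1}$), the two explicit $w$-coefficients combine as
\begin{equation*}
-2(p-1)\vartheta^{p-1}\dot\vartheta v\Phi-2\vartheta^{p+1}\dot\Phi F^a_a=2F^{-p}\bigl((p-1)\vartheta^p-p\vartheta^p\bigr)+\O^{-\delta}=-\tfrac{2}{n^p}+\O^{-\delta}
\end{equation*}
for some $\delta>0$. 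The cancellation $(p-1)\vartheta^p-p\vartheta^p=-\vartheta^p$ is the crux and yields precisely the decay rate dictated by the spherical flow.

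The rest is a parabolic maximum principle on $\tilde{w}:=w\,e^{2\gamma_* t/n^p}$. The good gradient term $-\vartheta^{p-1}\dot\Phi F^{ar}\varphi_{i_1\ldots i_m;r}\varphi^{i_1\ldots i_m}{}_{;a}$ dominates $-c'\O^{-2}|D^{m+1}\varphi|^2$ (using $\vartheta^{p-1}\dot\Phi=\O^{-2}$ and uniform ellipticity from Proposition \ref{kappaBound}), so two applications of Young's inequality absorb the $D^{m+1}\varphi$-containing error terms of \eqref{scalarflowderivatives2}, producing only extra contributions of type $w\,\O^{-2\gamma_0}$ and $w^2\,\O^{-4}$. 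At an interior maximum $(t_0,x_0)$ of $\tilde{w}$ on $[0,T]\times\S^n$, the inequalities $\dot{\tilde{w}}\geq 0$ and $\vartheta^{p-1}\dot\Phi F^{ar}w_{ar}\leq 0$, combined with the dominant coefficient above and the absorption of linear-in-$w$ errors, give for $t_0\geq T_0(n,p,M_0,\gamma_*)$
\begin{equation*}
\tfrac{1-\gamma_*}{2n^p}\,w(t_0,x_0)\leq C\sqrt{w(t_0,x_0)}\,\O^{-1+m(1-\gamma_0)}.
\end{equation*}
Squaring and using the choice $m(1-\gamma_0)\leq 1-\gamma_*$ gives $\tilde{w}(t_0,x_0)\leq C'$ uniformly in $T$, completing the induction.

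The principal obstacle is this leading cancellation: without it no negative $\O^0$-coefficient of $w$ survives and the maximum principle collapses. A secondary technical point is that the nonlinear error terms $w^{3/2}\O^{-(2+\gamma_0)}$ and $w^2\O^{-4}$ produced by \eqref{scalarflowderivatives2} and the Young absorption must be controlled to close the inequality. This is handled by first running the same argument with $\gamma_*=0$ to establish uniform-in-$t$ boundedness of $w$ (alternatively by parabolic Schauder iteration starting from the $C^\infty$-convergence in rescaled coordinates), after which $w^{3/2}\leq\sqrt{M}\,w$ and $w^2\leq M\,w$ reduce these errors to linear-in-$w$ terms absorbed as above.
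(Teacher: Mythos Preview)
Your identification of the dominant linear coefficient $-\tfrac{2}{n^p}$ via the cancellation $(p-1)-p=-1$ is correct and is the heart of the estimate; the paper extracts the same limit in the form $-2\vartheta^{-1}\dot\vartheta vF^{-p}\to-\tfrac{2}{n^p}$ together with $\limsup 2pF^{-p}(\vartheta^{-1}\dot\vartheta v-F^{-1}F^a_a)\leq 0$.

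The essential difference lies in the choice of test function. The paper does \emph{not} apply the maximum principle to $\tilde w$ directly, but to $\zeta=\log\tilde w+z$ with $z=\tfrac12|D^{m-1}\varphi|^2$. Taking the logarithm divides the right-hand side of \eqref{scalarflowderivatives2} by $w$, so the cubic and quartic errors $w^{3/2}\O^{-(2+\gamma)}$ and $w^{2}\O^{-4}$ become $\sqrt{w}\,\O^{-(2+\gamma)}$ and $w\,\O^{-4}$; these are now absorbed by the additional good term $-\vartheta^{p-1}\dot\Phi\,|D^{m}\varphi|^{2}$ that comes from the $z$-equation \eqref{scalarflowderivatives1}. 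The price of the logarithm is an extra term $\vartheta^{p-1}\dot\Phi F^{ar}(\log w)_a(\log w)_r$, but at a maximum of $\zeta$ one has $(\log w)_a=-z_a=D^{m-1}\varphi\star D^{m}\varphi=\O^{-\gamma}\star D^{m}\varphi$, which is harmless.

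Your direct approach on $\tilde w$ runs into a genuine circularity precisely where the paper's $\log$ trick intervenes. After your Young absorptions the residual inequality at a spatial maximum reads, for $t_0$ large,
\[
\tfrac{2(1-\gamma_*)}{n^p}\,w(t_0)\;\leq\; o(1)\,w(t_0)+C\sqrt{w(t_0)}\,e^{-\mu t_0}+C\,w(t_0)^{3/2}e^{-(2+\gamma_0)t_0/n^p}+C\,w(t_0)^{2}e^{-4t_0/n^p},
\]
and for large $w(t_0)$ the superlinear right-hand side dominates, so no bound follows. The bootstrap you propose (first take $\gamma_*=0$) does not close: comparison with the scalar ODE $y'=-\tfrac{1}{n^p}y+Cy^{2}e^{-4t/n^p}$ shows finite-time blow-up whenever $y(T_0)$ exceeds a fixed multiple of $e^{4T_0/n^p}$, and the only a~priori control on $w(T_0)$ is the $T_0$-dependent smoothness from Theorem~\ref{longtime}, with no growth rate in $T_0$ established; the choice of $T_0$ therefore becomes circular. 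The alternative you mention, ``parabolic Schauder iteration starting from the $C^\infty$-convergence in rescaled coordinates,'' is equally circular, since that convergence is exactly what this section is proving. The logarithmic test function $\log\tilde w+z$ is the device the paper uses to break this loop.
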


\begin{proof}
We use a method similar to the proof of \cite[Lemma 6.10]{cg:ImcfHyp}.\\

For $m=1,2$ this has been proven for $\g=1,$ cf. Theorem \ref{optGradDecay} and Theorem \ref{phi2DerDecay}. Thus let the conclusion hold for $1\leq k\leq m-1,$ $m\geq 3.$ Let 
\eq z=\frac 12 \abs{D^{m-1}\p}^{2}\eeq and \eq w=\frac 12\abs{D^{m}\p}^{2}, \eeq as well as
\eq \~{w}=we^{\frac{2\lambda}{n^{p}}t},\ 0\leq \lambda \leq 1.\eeq
Set \eq \zeta=\log\~{w} +z. \eeq
Then by \ref{scalarflowderivatives} we have
\eq \begin{aligned}
\dot{\zeta}-\t^{p-1}\dot{\Phi}F^{a}_{l}\~{g}^{lr}\zeta_{ra}&=\~{w}^{-1}(\dot{\~{w}}-\t^{p-1}\dot{\Phi}F^{a}_{l}\~{g}^{lr}\~{w}_{ar})
				\\
						&\hphantom{=}+\t^{p-1}\dot{\Phi}F^{a}_{l}\~{g}^{lr}(\log \~{w})_{a}(\log\~{w})_{r}\\
						&\hphantom{=}+\dot{z}-\t^{p-1}\dot{\Phi}F^{a}_{l}\~{g}^{lr}z_{ra}\\
						&=w^{-1}(\dot{w}-\t^{p-1}\dot{\Phi}F^{a}_{l}\~{g}^{lr}w_{ra})+\frac{2}{n^{p}}\lambda \\
						&\hphantom{=}+\t^{p-1}\dot{\Phi}F^{a}_{l}\~{g}^{lr}(\log w)_{a}(\log w)_{r}\\
						&\hphantom{=}+\dot{z}-\t^{p-1}\dot{\Phi}F^{a}_{l}\~{g}^{lr}z_{ar}\\
						&=-2(p-1)\t^{p-1}\dot{\t}v\Phi-2\t^{p+1}\dot{\Phi}F^{a}_{a}\\
						&\hphantom{=}-\t^{p-1}\dot{\Phi}\abs{D^{m+1}\p}^{2}w^{-1}\\
						&\hphantom{=}+\t^{p-1}\dot{\Phi}(\sigma^{ar}-F^{ar})\p_{i_{1}\ldots i_{m};r}{\p^{i_{1}\ldots i_{m}}}_{;a}w^{-1}\\
						&\hphantom{=}+(\O^{-1+m(1-\g)}\star D^{m}\p)w^{-1}\\
						&\hphantom{=}+(\O^{-2\g}\star D^{m}\p\star D^{m}\p)w^{-1}\\
						&\hphantom{=}+(\O^{-(2+\g)}\star D^{m}\p\star D^{m}\p\star D^{m}\p)w^{-1}\\
						&\hphantom{=}+(\O^{-4}\star D^{m}\p\star D^{m}\p\star D^{m}\p\star D^{m}\p)w^{-1}\\
						&\hphantom{=}+(\O^{-(1+\g)}\star D^{m}\p\star D^{m+1}\p)w^{-1}\\
						&\hphantom{=}+(\O^{-3}\star D^{m}\p\star D^{m}\p\star D^{m+1}\p)w^{-1}\\
						&\hphantom{=}-\t^{p-1}\dot{\Phi}\abs{D^{m}\p}^{2}\\
						&\hphantom{=}+\t^{p-1}\dot{\Phi}(\sigma^{ar}-F^{ar})\p_{i_{1}\ldots i_{m-1};r}{\p^{i_{1}\ldots i_{m-1}}}_{;a}\\
						&\hphantom{=}+\O^{-(1+\g)+(m-1)(1-\g)}+\O^{-(1+2\g)}\star D^{m}\p\\
						&\hphantom{=}+\O^{-(3+\g)}\star D^{m}\p\star D^{m}\p\\
						&\hphantom{=}+\t^{p-1}\dot{\Phi}F^{ar}(\log w)_{a}(\log w)_{r}+\frac{2}{n^{p}}\lambda. \end{aligned}\eeq
We want to bound $\zeta.$ Thus, fix $0<T<\infty$ and suppose that \eq \sup_{[0,T]\times \S^{n}}\zeta=\zeta(t_{0},x_{0}),\ t_{0}>0.\eeq% attains a maximal value at some $(t_{0},x_{0})\in (0,T]\times \S^{n}.$ 
At this point we have
\eq -z_{a}=(\log w)_{a} \eeq
and thus
\eq \begin{aligned}
 &\t^{p-1}\dot{\Phi}F^{a}_{l}\~{g}^{lr}(\log w)_{a}(\log w)_{r}\\
               =&\t^{p-1}\dot{\Phi}F^{a}_{l}\~{g}^{lr}\p_{i_{1}\ldots i_{m-1};a}\p^{i_{1}\ldots i_{m-1}}\p_{j_{1}\ldots j_{m-1};r}\p^{j_{1}\ldots j_{m-1}}\\
 				=&\O^{-2(1+\g)}\star D^{m}\p\star D^{m}\p. \end{aligned}\eeq
Thus, at $(t_{0},x_{0})$, also supposing that \eq\abs{D^{m}\p}e^{\frac{\lambda}{n^{p}}t}\geq 1,\eeq
\eq \begin{aligned}
0&\leq 2\t^{p-1}\dot{\t}v\Phi-2(\t^{p+1}\dot{\Phi}F^{a}_{a}+p\t^{p-1}\dot{\t}v\Phi)\\
  &\hphantom{=}+w^{-1}(-\t^{p-1}\dot{\Phi}\abs{D^{m+1}\p}^{2}\\
  &\hphantom{=}+\t^{p-1}\dot{\Phi}(\sigma^{ar}-F^{ar})\p_{i_{1}\ldots i_{m};r}{\p^{i_{1}\ldots i_{m}}}_{;a}\\
  &\hphantom{=}+c_{m}e^{-\frac{1+\g}{n^{p}}t}\abs{D^{m}\p}\abs{D^{m+1}\p}+c_{m}e^{-\frac{3}{n^{p}}t}\abs{D^{m}\p}^{2}\abs{D^{m+1}\p})\\
  &\hphantom{=}+(-\t^{p-1}\dot{\Phi}\abs{D^{m}\p}^{2}+\t^{p-1}\dot{\Phi}(\sigma^{ar}-F^{ar})\p_{i_{1}\ldots i_{m-1};r}{\p^{i_{1}\ldots i_{m-1}}}_{;a}\\
  &\hphantom{=}+c_{m}e^{\frac{m(1-\g)-1}{n^{p}}t}\abs{D^{m}\p}^{-1}+c_{m}e^{\frac{(m-1)(1-\g)-(1+\g)}{n^{p}}t}\\
  &\hphantom{=}+c_{m}e^{-\frac{1+2\g}{n^{p}}t}\abs{D^{m}\p}+c_{m}e^{-\frac{2(1+\g)}{n^{p}}t}\abs{D^{m}\p}^{2}) +\frac{2}{n^{p}}\lambda\\
  &\leq -2\t^{-1}\dot{\t}vF^{-p}(h^{l}_{a})+2pF^{-p}(h^{l}_{a})(\t^{-1}\dot{\t}v-F^{-1}F^{a}_{a})\\
  &\hphantom{=}+w^{-1}(-\t^{p-1}\dot{\Phi}\abs{D^{m+1}\p}^{2}\\
  &\hphantom{=}+\t^{p-1}\dot{\Phi}(\sigma^{ar}-F^{ar})\p_{i_{1}\ldots i_{m};r}{\p^{i_{1}\ldots i_{m}}}_{;a}\\
  &\hphantom{=}+ce^{-\frac{\g}{n^{p}}t}\abs{D^{m}\p}^{2}+ce^{-\frac{2+\g}{n^{p}}t}\abs{D^{m+1}\p}^{2}\\
  &\hphantom{=}+ce^{-\frac{3}{n^{p}}t}(\abs{D^{m}\p}^{4}+\abs{D^{m+1}\p}^{2}))\\
  &\hphantom{=}+(-\t^{p-1}\dot{\Phi}\abs{D^{m}\p}^{2}+\t^{p-1}\dot{\Phi}(\sigma^{ar}-F^{ar})\p_{i_{1}\ldots i_{m-1};r}{\p^{i_{1}\ldots i_{m-1}}}_{;a}\\
  &\hphantom{=}+ce^{\frac{\lambda +m(1-\g)-1}{n^{p}}t}+ce^{\frac{(m-1)(1-\g)-(1+\g)}{n^{p}}t}+ce^{-\frac{1+2\g}{n^{p}}t}\abs{D^{m}\p}^{2})\\
  &\hphantom{=}+\frac{2}{n^{p}}\lambda, \end{aligned}\eeq
  
  where we used \eq ab\leq \frac{\e}{2}a^{2}+\frac{1}{2\e}b^{2}\eeq with $a=\abs{D^{m}\p},$ $b=\abs{D^{m+1}\p}$ and $\e=e^{\frac{t}{n^{p}}},$ as well as with
  $a=1,$ $b=\abs{D^{m}\p}$ and $\e=1.$
  
  From the $C^{1}$ and $C^{2}$ estimates we know \eq -2\t^{-1}\dot{\t}vF^{-p}\rightarrow -\frac{2}{n^{p}},\eeq
  \eq \limsup\limits_{t\rightarrow\infty}2pF^{-p}(\t^{-1}\dot{\t}v-F^{-1}F^{a}_{a})\leq 0\eeq and
  \eq \abs{\sigma^{ar}-F^{ar}}\rightarrow 0.\eeq
  In view of \eq \t^{p-1}\dot{\Phi}=p\t^{-2}F^{-(p+1)}(h^{l}_{a})\geq ce^{-\frac{2}{n^{p}}t}, \eeq cf. Corollary \ref{thetaGrowth},
  we may absorb any bad term by the good terms
  \eq -\t^{p-1}\dot{\Phi}\abs{D^{k}\p}^{2},\ k=m, m+1, \eeq if $t_{0}$ is supposed to be large enough and $0\leq\lambda <1.$   Thus, for large $t_{0}$ and $\lambda <1$ we obtain a contradiction and conclude
  \eq\abs{D^{m}\p}e^{\frac{\lambda}{n^{p}}t}\leq c=c(n,p,M_{0},m,\lambda)\ \  \forall 0\leq\lambda<1,\eeq
  which means \eq D^{m}\p=\O^{-\g}\ \ \forall 0\leq\g<1.\eeq

\end{proof}

\section{The conformally flat parametrization and convergence to a sphere}

In order to complete the proof of Theorem \ref{mainthm}, we use the conformally flat parametrization and consider the flow in $\R^{n+1}.$ From now on, we distinguish quantities in $\H^{n+1}$ from those in $\R^{n+1}$ by an additional br\`eve, e.g. $\breve{u}, \breve{g}_{ij},$ etc., compare \cite[ch. 5]{cg:ImcfHyp}. For a flow hypersurface
\eq M=\graph \breve{u}=\graph u\eeq we then have
\eq \label{ConfScalarFlow}\breve{u}=\log(2+u)-\log(2-u) \eeq
and \eq \abs{D\breve{u}}^{2}=u^{-2}\sigma^{ij}u_{i}u_{j}\equiv\abs{Du}^{2}. \eeq
Note that \eq \begin{aligned} d\-{s}^{2}&=\frac{1}{(1-\frac 14 r^{2})^{2}}(dr^{2}+r^{2}\sigma_{ij}dx^{i}dx^{j})\\
							&=e^{2\psi}(dr^{2}+r^{2}\sigma_{ij}dx^{i}dx^{j}) .\end{aligned}\eeq
Let \eq\~{\t}=\frac 12\frac{r}{1-\frac 14 r^{2}},\eeq
then the second fundamental forms $\breve{h}^{i}_{j}$ and $h^{i}_{j}$ satisfy the relation
\eq e^{\psi}\breve{h}^{i}_{j}=h^{i}_{j}+v^{-1}\~{\t}\d^{i}_{j}\equiv \check{h}^{i}_{j}, \eeq 
cf. \cite[(5.11), (5.13)]{cg:ImcfHyp}. Set \eq g_{ij}=u_{i}u_{j}+u^{2}\sigma_{ij} \eeq and
\eq \check{h}_{ij}=g_{ik}\check{h}^{k}_{j}, \eeq
then the flow in $\H^{n+1},$
\eq \dot{x}=F^{-p}\breve{\nu}, \ F=F(\breve{h}^{i}_{j}), \eeq
now reads in $\R^{n+1}$ 
\eq \dot{x}=F^{-p}e^{(p-1)\psi}\nu, \label{confFlow} \eeq
where \eq F=F(\check{h}_{ij})=F(\check{h}^{i}_{j}). \eeq
Using \eq h_{ij}v^{-1}=-u_{;ij}+\-{h}_{ij} \eeq
 and the homogeneity of $F=F(\check{h}_{ij}),$ we obtain
 \eq \label{confscalarflow} \dot{u}-\dot{\Phi}F^{ij}u_{;ij}=-e^{(p-1)\psi}v\Phi+v^{-1}\dot{\Phi}F-\dot{\Phi}v^{-2}\~{\t}F^{ij}g_{ij}-\dot{\Phi}F^{ij}\-{h}_{ij}. \eeq
Here and in the following, $u_{;ij}$ denotes covariant differentiation with respect to $g_{ij},$ where merely indices, $u_{ij},$ denote derivatives with respect to $\sigma_{ij}$ and $\dot{u}=\frac{\del u}{\del t}$ is a partial derivative. We want to use coordinates $(x^{i}).$ 

\begin{lemss} \label{confDer}
Let $u$ be the scalar solution of (\ref{confFlow}). Then
\eq D^{m}u=\O^{-1+\e}\ \  \forall m\in\mathbb{N}^{*}\ \ \forall \e>0. \eeq
\end{lemss}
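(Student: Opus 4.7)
My approach is to exploit the explicit conformal change of variable $u = 2\tanh(\breve u/2)$, which inverts $\breve u = \log(2+u) - \log(2-u)$. Since $\breve u$ is exactly the hyperbolic graph function denoted ``$u$'' in the earlier sections, every decay estimate already proved for it (and for $\p$) applies verbatim. Concretely, I plan to bound $D^m u$ by expanding via Fa\`a di Bruno (Lemma~\ref{derivrules}) with outer function $f(s)=2\tanh(s/2)$ and inner function $\breve u$.

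For the inner derivatives, Theorem~\ref{phiderivativedecay} gives $D^k\p=\O^{-\g}$ for every $k\ge 1$ and every $0\le\g<1$. Plugging this into Lemma~\ref{uandphi} (whose ``$u$'' is the present $\breve u$) yields $D^k\breve u=\O^{k(1-\g)}$; since $\g$ may be taken arbitrarily close to $1$, we obtain $D^k\breve u=\O^{\d}$ for every $\d>0$ and every $k\ge 1$. For the outer function, expanding $f(s)=2-4e^{-s}+O(e^{-2s})$ as $s\to\infty$ shows $|f^{(j)}(s)|\le c_j e^{-s}$ for all $j\ge 1$. Using the lower barrier $\breve u\ge t/n^{p}-c$ supplied by Corollary~\ref{sphFlowGrowth} together with Lemma~\ref{oscBound}, this translates into $f^{(j)}(\breve u)=\O^{-1}$ for every $j\ge 1$.

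Combining these two inputs via Fa\`a di Bruno, for $m\ge 1$ each term of $D^m u$ has the form
\[
f^{(\sum_i k_i)}(\breve u)\,\prod_{i=1}^{m}\left(\frac{D^i\breve u}{i!}\right)^{k_i},\qquad \sum_i i\, k_i=m,
\]
with $\sum_i k_i\ge 1$. Inserting $f^{(\sum_i k_i)}(\breve u)=\O^{-1}$ and $(D^i\breve u)^{k_i}=\O^{i k_i(1-\g)}$ gives a total bound $\O^{-1+m(1-\g)}$. Given $\e>0$, choosing $\g\in(0,1)$ close enough to $1$ so that $m(1-\g)\le\e$ yields $D^m u=\O^{-1+\e}$, as claimed. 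There is no real obstacle beyond algebraic bookkeeping; the only point requiring care is the overloading of the letter $u$ across the two parametrizations, which I handle by writing $\breve u$ consistently for the hyperbolic graph function so that the already-proven estimates of Section 4 transfer without ambiguity.
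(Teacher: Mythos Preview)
Your proof is correct. Both your approach and the paper's exploit the conformal change of variable together with the decay $D^{k}\breve u=\O^{k(1-\g)}$ obtained from Theorem~\ref{phiderivativedecay} and Lemma~\ref{uandphi}. The only difference is in how the inversion is carried out: the paper works with the differential relation $\breve u_{i}=(1-\tfrac14 u^{2})^{-1}u_{i}$, expands $D^{m}\breve u = g(u)\,D^{m}u+\text{(lower order)}$, and solves for $D^{m}u$ by induction on $m$, controlling $D^{k}g(u)$ through the inductive hypothesis and the estimate $(2-u)^{\beta}=\O^{-\beta}$. You instead invert at the level of the functions themselves, $u=2\tanh(\breve u/2)$, and apply Fa\`a di Bruno once, using the elementary observation $|f^{(j)}(s)|\le c_{j}e^{-s}$ for the outer function. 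Your route is a little cleaner in that it avoids the induction loop entirely; the paper's route has the minor advantage of not requiring one to identify the explicit inverse or to verify the decay of the higher derivatives of $\tanh$.
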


\begin{proof}
We have \eq\breve{u}_{i}=\frac{u_{i}}{1-\frac 14 u^{2}}.\eeq
In view of (\ref{ConfScalarFlow}) there holds\eq 2-u=(2+u)e^{-\breve{u}} \eeq and thus 
\eq \label{confDer1} (2-u)^{\b}=\O^{-\b}\ \ \forall \b\in\R,\eeq using Lemma \ref{oscBound}.
Set \eq g(u)=\frac{1}{1-\frac 14 u^{2}}\equiv \frac{1}{f(u)}.\eeq
Then
\eq D^{m}g=\sum_{k_{1}+\ldots+mk_{m}=m}c_{m}f^{-(\sum_{i=1}^{m}k_{i}+1)}\prod_{i=1}^{m}\left(\frac{D^{i}f}{i!}\right)^{k_{i}},\eeq
and \eq D^{i}f=\sum_{s+r=i}D^{s}u\star D^{r}u.\eeq
Taking $\abs{D\breve{u}}$ with respect to the spherical norm, we see that the claim holds for $m=1,$ by Lemma \ref{uandphi}. Suppose the claim to be true for $1\leq k\leq m-1.$ Then
\eq D^{m}\breve{u}=gD^{m}u+Du\star D^{m-1}g+\sum_{\substack{s+r=m-1\\ s,r\geq 1}}D^{r+1}u\star D^{s}g\eeq
so that
\eq \begin{aligned} D^{m}u&= g^{-1}D^{m}\breve{u}+g^{-1}Du\star D^{m-1}g+g^{-1}\sum_{\substack{s+r=m-1\\s,r\geq 1}}D^{r+1}u\star D^{s}g\\
				&=\O^{m(1-\g)-1}+\O^{\e m-1} \ \forall \gamma<1\ \forall\e>0. \end{aligned}\eeq 
\end{proof}

\begin{lemss} \label{addConfDer}
For all $m\in\mathbb{N}^{*}$ and for all $\e>0$ there hold
\eq \label{addConfDer1} D^{m}v^{\b}=\O^{-2+\e}\ \ \forall \b\in\R, \eeq
\eq  \label{addConfDer2} D^{m}\left(\frac{2u}{2+u}\right)=\O^{-1+\e}=D^{m}\left(\left(\frac{4}{2+u}\right)^{p-1}\right), \eeq
\eq \label{addConfDer3} D^{m}((2-u)^{\b})=\O^{-\b+\e m},\eeq
\eq \label{addConfDer4} D^{m}(\check{h}^{i}_{j}(2-u))=\O^{-1+\e}\eeq and \eq \label{addConfDer5} D^{m}(\check{h}_{ij}(2-u))=\O^{-1+\e}.\eeq
\end{lemss}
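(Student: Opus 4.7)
The plan is to reduce every claim to the two already-established building blocks---Lemma \ref{confDer} ($D^k u=\O^{-1+\e}$ for $k\geq 1$) and the estimate $(2-u)^\b=\O^{-\b}$ from equation (\ref{confDer1})---via the generalised product and chain rules of Lemma \ref{derivrules}. Since $u$ stays uniformly bounded between a positive constant and $2$, every smooth function of $u$ has bounded derivatives at the argument $u$, so the only source of exponential decay is the factor $\prod(D^iu)^{k_i}$ that Fa\`a di Bruno produces, together with the prefactor $(2-u)^\a$ when it is present.

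For (\ref{addConfDer3}), expanding $s^\b\circ(2-u)$ gives terms $(2-u)^{\b-\sum k_i}\prod(D^iu)^{k_i}=\O^{-\b+\sum k_i}\cdot\O^{(-1+\e)\sum k_i}=\O^{-\b+\e m}$. For (\ref{addConfDer2}), write $2u/(2+u)=2-4/(2+u)$ and $(4/(2+u))^{p-1}=4^{p-1}(2+u)^{1-p}$; since $2+u$ is bounded, Fa\`a di Bruno gives a bounded prefactor times $\prod(D^iu)^{k_i}=\O^{-1+\e}$ (for $m\geq 1$, forcing $\sum k_i\geq 1$). For (\ref{addConfDer1}), observe that every term in the product-rule expansion of $D^k(|Du|^2)=D^k(u^{-2}\sigma^{ab}u_au_b)$ carries two factors of the form $D^{\geq 1}u$, so $D^k(|Du|^2)=\O^{-2+\e}$ for all $k\geq 0$; Fa\`a di Bruno applied to $(1+s)^{\b/2}\circ|Du|^2$ then yields $\O^{-2+\e}$ for $m\geq 1$.

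For (\ref{addConfDer4}) the main algebraic input is the identity
$\check h^i_j(2-u)=e^\psi(2-u)\breve h^i_j=\frac{4\breve h^i_j}{2+u}$,
which comes from $e^\psi=\frac{4}{(2-u)(2+u)}$. Splitting $\breve h^i_j=\d^i_j+(\breve h^i_j-\d^i_j)$, the piece $\frac{4\d^i_j}{2+u}$ is covered by (\ref{addConfDer2}); for the remainder the product rule reduces matters to $D^m(\breve h^i_j-\d^i_j)$, which I extract from the explicit formula $\breve h^l_k=-v^{-1}\t^{-1}\~g^{lj}\p_{jk}+v^{-1}\coth(\breve u)\d^l_k$ (obtained from (\ref{phiA2}) via $\~h^l_k=\t\breve h^l_k$), combining Theorem \ref{phiderivativedecay} ($D^m\p=\O^{-\g}$ for every $\g<1$), Theorem \ref{optGradDecay}, Corollary \ref{thetaGrowth}, and the fact that $D^m\breve u=\O^\e$ for every $\e>0$ (since $D\p=\t^{-1}D\breve u$ and $\t=\O^1$). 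Term by term this yields $D^m(\breve h^i_j-\d^i_j)=\O^{-\g}$ for every $\g<1$, hence $\O^{-1+\e}$. Finally (\ref{addConfDer5}) follows from (\ref{addConfDer4}) together with $\check h_{ij}=g_{ik}\check h^k_j$ and the product rule, since $D^l g_{ij}=D^l(u_iu_j+u^2\sigma_{ij})$ is bounded for $l=0$ and $\O^{-1+\e}$ for $l\geq 1$ by Lemma \ref{confDer}.

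The main obstacle is precisely the estimate on $D^m(\breve h^i_j-\d^i_j)$: Theorem \ref{kappaDecay} supplies only the zeroth-order statement, and the higher covariant derivatives have to be extracted by hand from the structural formula (\ref{phiA2}). The delicate point is that $D^m\breve u$ is merely $\O^\e$---so essentially non-decaying---and hence every bit of decay in the differentiated product must come from evaluating $\t^{-1}$, $\coth\breve u-1$, and $\p_{jk}$ at large $\breve u$; one has to verify that applying $D$ to any of these factors preserves their decay rate up to an arbitrarily small $\e$-loss, which is exactly what Fa\`a di Bruno combined with Theorem \ref{phiderivativedecay} guarantees.
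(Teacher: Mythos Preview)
Your arguments for \eqref{addConfDer1}--\eqref{addConfDer3} and \eqref{addConfDer5} are essentially those of the paper: Fa\`a di Bruno applied to smooth functions of $u$ combined with Lemma~\ref{confDer}, with the only cosmetic difference that for \eqref{addConfDer1} you first estimate $D^k(|Du|^2)$ and then compose with $(1+s)^{\b/2}$, whereas the paper differentiates $v$ directly and inducts.

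For \eqref{addConfDer4} you take a genuinely different route. The paper stays entirely on the Euclidean side: it writes $\check h^i_j(2-u)$ using the graph formula \eqref{longtime1} with $\t(u)=u$, obtaining
\[
\check h^i_j(2-u)=\frac{2-u}{vu}\d^i_j+\frac{2-u}{v^3u^3}u^iu_j-(2-u)\frac{\sigma^{ik}-v^{-2}u^{-2}u^iu^k}{vu^2}u_{kj}+v^{-1}\frac{2u}{2+u}\d^i_j,
\]
and then each summand is handled directly by Lemma~\ref{confDer} together with \eqref{addConfDer1}--\eqref{addConfDer3}. Your approach instead passes back to the hyperbolic side via $\check h^i_j(2-u)=\tfrac{4}{2+u}\breve h^i_j$, splits off $\d^i_j$, and controls $D^m(\breve h^i_j-\d^i_j)$ through \eqref{phiA2}, Theorem~\ref{phiderivativedecay}, Lemma~\ref{uandphi}, and the asymptotics of $\t^{-1}$ and $\coth\breve u-1$. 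This is correct, and your final paragraph accurately identifies the only delicate point (that $D^m\breve u$ is merely $\O^\e$, so the decay has to come from the remaining factors). The trade-off is that your argument imports the full strength of the hyperbolic higher-order estimates from Section~5, whereas the paper's computation for \eqref{addConfDer4} is self-contained within the conformally flat setting and needs nothing beyond Lemma~\ref{confDer}; on the other hand, your identity $\check h^i_j(2-u)=\tfrac{4}{2+u}\breve h^i_j$ makes the connection to Theorem~\ref{kappaDecay} transparent and explains conceptually why the estimate holds.
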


\begin{proof}
We consider
\eq v=\sqrt{1+u^{-2}\sigma^{ij}u_{i}u_{j}}. \eeq
Differentiation gives
\eq \begin{aligned} v_{i_{1}}&=\frac{1}{2v}(2u^{-2}\sigma^{kl}u_{ki_{1}}u_{l}-2u^{-3}\sigma^{kl}u_{k}u_{l}u_{i_{1}})\\
					&=v^{-1}(u^{-2}\sigma^{kl}u_{ki_{1}}u_{l}-u^{-3}\sigma^{kl}u_{k}u_{l}u_{i_{1}})\\
					&=\O^{-2+\e}\ \forall \e>0. \end{aligned}\eeq
Thus $D(v^{\b})=\b v^{\b-1}Dv=\O^{-2+\e}\ \forall\e>0.$ Let the claim hold for $1\leq k\leq m-1.$ Then
\eq \begin{aligned} D^{m}v&=\sum_{s+r=m-1}D^{s}(v^{-1})\star D^{r}(u^{-2}\sigma^{kl}u_{ki_{1}}u_{l}-u^{-3}\sigma^{kl}u_{k}u_{l}u_{i_{1}})\\
				&=\O^{-2+\e},\end{aligned}\eeq
so that
\eq D^{m}(v^{\b})=\sum_{k_{1}+\ldots+mk_{m}=m}c_{m}\star \O^{0}\star\prod_{i=1}^{m}\left(\frac{D^{i}v}{i!}\right)^{k_{i}}=\O^{-2+\e}. \eeq
Thus (\ref{addConfDer1}) is true.\ \\

To prove (\ref{addConfDer2}), suppose that $f$ is smooth, then
\eq D^{m}(f\circ u)=\sum_{k_{1}+\ldots +mk_{m}=m}c_{m}f^{(k)}(u)\prod_{i=1}^{m}\left(\frac{D^{i}u}{i!}\right)^{k_{i}}=\O^{-1+\e}, \eeq
$k=\sum_{i=1}^{m}k_{i},$ since in case \eq f(x)=\frac{2x}{2+x}\eeq or \eq f(x)=\left(\frac{4}{2+x}\right)^{p-1}\eeq we have
\eq D^{k}f\in C^{\infty}(u([0,\infty)\times\S^{n})).\eeq
In case of (\ref{addConfDer3}) we have \eq f(x)=(2-x)^{\b}\eeq such that
\eq f^{(k)}(x)=\prod_{i=0}^{k-1}(\b-i)(2-x)^{\b-k}(-1)^{k},\eeq
 implying \eq f^{(k)}(u)=\O^{k-\b}.\eeq
 Thus \eq D^{m}(f\circ u)=\O^{-\b+\e m}.\eeq
 In order to show (\ref{addConfDer4}), first observe that there holds, according to (\ref{longtime1}),
 \eq h^{i}_{j}=\frac{1}{vu}\d^{i}_{j}+\frac{1}{v^{3}u^{3}}u^{i}u_{j}-\frac{\sigma^{ik}-v^{-2}u^{-2}u^{i}u^{k}}{vu^{2}}u_{kj}. \eeq
 Have in mind, that now $\t{(u)}=u,$ $u^{i}=\sigma^{ik}u_{k}$ and derivatives are taken with respect to $\sigma_{ij}.$
 Thus
 \eq \begin{aligned}
 D^{m}(\check{h}^{i}_{j}(2-u))&=D^{m}\Big(\frac{2-u}{vu}\d^{i}_{j}+\frac{2-u}{v^{3}u^{3}}u^{i}u_{j}\\
 					&\hphantom{=}-(2-u)\frac{\sigma^{ik}-v^{-2}u^{-2}u^{i}u^{k}}{vu^{2}}u_{kj}+v^{-1}\frac{2u}{2+u}\d^{i}_{j}\Big)\\
					&=\O^{-1+\e}+\O^{-3+\e}+\O^{-2+\e}\\
					&=\O^{-1+\e}. \end{aligned} \eeq
(\ref{addConfDer5}) follows from \eq D^{m}(g_{ij})=D^{m}(u_{i}u_{j}+u^{2}\sigma_{ij})=\O^{-1+\e}.\eeq
\end{proof}

\vspace{0,01 cm}

\begin{thmss}
Let $u$ be the scalar solution of (\ref{confFlow}), then
\eq  D^{m}u=\O^{-1}\ \ \forall m\in\mathbb{N}^{*}. \eeq
\end{thmss}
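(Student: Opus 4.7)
The plan is to proceed by induction on $m$, sharpening the estimate $D^m u = \O^{-1+\e}$ from Lemma \ref{confDer} by eliminating the $\e$ loss.

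The base case $m=1$ should be immediate. From the identity $\breve{u}_i = (1-u^2/4)^{-1}u_i$, equivalently $u_i=(1-u^2/4)\breve{u}_i$, combined with $1-u^2/4 = (2-u)(2+u)/4 = \O^{-1}$ (consequence of \eqref{confDer1}) and the sharp $C^1$-bound $|D\breve{u}| \le ce^{-t/n^p}$ (which follows from $v-1 \le ce^{-2t/n^p}$, Theorem \ref{optGradDecay}, applied in the hyperbolic setting), one obtains $|Du| \le \O^{-2} \subset \O^{-1}$.

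For the inductive step, assuming $D^k u = \O^{-1}$ for $1\le k\le m-1$, I would apply the parabolic maximum principle to
$$W \;=\; \tfrac12 |D^m u|^2\, e^{2t/n^p},$$
whose evolution equation is obtained by differentiating the scalar flow \eqref{confscalarflow} $m$ times with respect to $\sigma_{ij}$. In analogy with the argument of Theorem \ref{phiderivativedecay}, the computation should yield an inequality of the schematic form
$$\dot W - \dot\Phi F^{ij} W_{;ij} \;\le\; \Big(\tfrac{2}{n^p} - 2 e^{(p-1)\psi} v F^{-p}\Big) W + \text{errors},$$
where the principal parenthetical combination is $o(1)$ as $t\to\infty$, because $F\to n$, $v\to 1$ and $e^{(p-1)\psi}\to 1$ at rates controlled by the already-established $C^1$, $C^2$ and curvature estimates (Theorems \ref{optGradDecay}, \ref{phi2DerDecay}, \ref{kappaDecay}), and the lower-order errors are $\O^{-\delta}$ for some $\delta = \delta(m) > 0$ by the induction hypothesis together with Lemma \ref{addConfDer}. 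After the cancellation of the leading $\tfrac{2}{n^p}$ terms, a strictly negative next-order coefficient should emerge, so that at an interior maximum of $W$ one derives a contradiction if $W$ is large enough, giving $W \le c$.

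The main obstacle is the bookkeeping for the error terms when one expands $F^{ij}$, $\dot\Phi$, $\bar h_{ij}$ and the $\psi$-weights by Fa\`a di Bruno, combined with the delicate exact cancellation between the $+\tfrac{2}{n^p}$ coming from the exponential weight and the asymptotic behavior of $-2e^{(p-1)\psi}v F^{-p}$. One must verify that the residue after cancellation is strictly negative modulo terms of order $\O^{-\delta}$, which will use the sharp curvature decay $|h^i_j-\delta^i_j|\le ce^{-2t/n^p}$ from Theorem \ref{kappaDecay} together with the improved conformal estimates of Lemma \ref{addConfDer}. Once this point is secured, the induction closes and the theorem follows.
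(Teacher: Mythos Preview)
Your base case is fine. The inductive step, however, rests on asymptotics that are false in the conformally flat picture. In \eqref{confscalarflow} the curvature function is $F=F(\check h^i_j)$ with $\check h^i_j=e^{\psi}\breve h^i_j$, so $F\sim n\,e^{\psi}$, while $e^{\psi}=(1-\tfrac14u^2)^{-1}\to\infty$ as $u\to 2$. Hence neither $F\to n$ nor $e^{(p-1)\psi}\to 1$ holds; in fact
\[
e^{(p-1)\psi}vF^{-p}=e^{-\psi}\,v\,F(\breve h)^{-p}\longrightarrow 0,
\]
so your parenthetical coefficient tends to $\tfrac{2}{n^p}>0$, not to zero, and the inequality gives no information at a maximum of $W$. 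If one traces the genuine zeroth-order damping---differentiating the factor $e^{-\psi}=1-\tfrac14u^2$ through the chain rule---one finds a coefficient $-\tfrac{u}{2}vF(\breve h)^{-p}\to-\tfrac{1}{n^p}$ in front of $u_k$, which \emph{does} cancel the exponential weight at leading order. But the cancellation is then exactly marginal, and there is no mechanism forcing the residue $\tfrac{2}{n^p}-uvF(\breve h)^{-p}$ to be strictly negative. A maximum-principle argument of this type can at best reproduce the sub-optimal bound $D^mu=\O^{-1+\e}$ already contained in Lemma~\ref{confDer}; compare Theorem~\ref{phiderivativedecay}, where precisely this marginality forces the restriction $\lambda<1$.

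The paper gets the sharp exponent by a change of dependent variable rather than by sharpening the damping. One sets $\phi=(2-u)^{-1}$, $\tilde\phi=\phi\,e^{-t/n^p}$, and computes that the evolution of $\tilde\phi$ has the form $\dot{\tilde\phi}-L\tilde\phi=(\text{bounded coefficients})\cdot\tilde\phi$, where every coefficient is assembled from the quantities of Lemma~\ref{addConfDer}. Using the already available $\O^{-1+\e}$ bounds, $m$ spatial differentiations make the entire right-hand side $\O^{-\delta}$ for some $\delta>0$; no sign is needed. Then $\tilde w=\sup\tfrac12|D^m\tilde\phi|^2$ satisfies $\dot{\tilde w}\le ce^{-\delta t}$, which one simply \emph{integrates} to obtain $D^m\tilde\phi=\O^0$. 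Translating back via Fa\`a di Bruno, $D^m\phi=(2-u)^{-2}D^mu+\O^1$, gives $D^mu=\O^{-1}$. The substitution $\phi=(2-u)^{-1}$ is the missing idea.
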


\begin{proof}
We follow the corresponding proof in \cite[Thm. 6.11]{cg:ImcfHyp}.\\

Define \eq \phi=(2-u)^{-1}, \ \~{\phi}=\phi e^{-\frac{t}{n^{p}}}\eeq and
\eq \~{F}=F(\check{h}^{k}_{l}(2-u)),\ \~{\Phi}=\Phi(\~{F}).\eeq
There holds, having in mind that $h^{i}_{j}\rightarrow \frac 12 \d^{i}_{j},$ and using (\ref{confDer1}) as well as Theorem \ref{optGradDecay}, that
\eq \abs{\check{h}^{k}_{l}(2-u)-\d^{k}_{l}}\leq \abs{h^{k}_{l}(2-u)}+\left|\left(v^{-1}\frac{2u}{2+u}-1\right)\d^{k}_{l}\right|\leq ce^{-\frac{t}{n^{p}}}.\eeq
We have \eq \frac{\del \~{\phi}}{\del t}=\dot{\~{\phi}}=\left(\frac{\dot{u}}{2-u}-\frac{1}{n^{p}}\right)\~{\phi}, \eeq
\eq \~{\phi}_{ij}=\frac{u_{ij}}{2-u}\~{\phi}+\frac{2u_{i}u_{j}}{(2-u)^{2}}\~{\phi}\eeq
and thus
\eq \begin{aligned}
&\dot{\~{\phi}}-v^{-2}\dot{\~{\Phi}}\~{\phi}^{-(p+1)}e^{-\frac{p+1}{n^{p}}t}\~{F}^{ij}\~{\phi}_{ij}\\
=&\frac{\~{\phi}}{2-u}\Big(\dot{u}-v^{-2}\dot{\~{\Phi}}\~{\phi}^{-(p+1)}e^{-\frac{p+1}{n^{p}}t}\~{F}^{ij}u_{ij}\\
&\hphantom{=}-v^{-2}\frac{2}{2-u}\dot{\~{\Phi}}\~{\phi}^{-(p+1)}e^{-\frac{p+1}{n^{p}}t}\~{F}^{ij}u_{i}u_{j}-\frac{2-u}{n^{p}}\Big). \end{aligned}\eeq

An easy calculation shows
\eq \begin {aligned}
u_{ij}&= v^{2}u_{;ij}-u^{-1}(\sigma^{kl}u_{k}u_{l}\sigma_{ij}-2u_{i}u_{j})\\
	&=-vh_{ij}+v^{2}\-{h}_{ij}-u^{-1}(\sigma^{kl}u_{k}u_{l}\sigma_{ij}-2u_{i}u_{j})\\
	&=-v\check{h}_{ij}+\~{\t}g_{ij}+v^{2}\-{h}_{ij}-u^{-1}(\sigma^{kl}u_{k}u_{l}\sigma_{ij}-2u_{i}u_{j}). \end{aligned}\eeq
	
Thus we conclude
\eq \begin{aligned}
&\dot{\~{\phi}}-v^{-2}\dot{\~{\Phi}}\~{\phi}^{-(p+1)}e^{-\frac{p+1}{n^{p}}t}\~{F}^{ij}\~{\phi}_{ij}\\
=&\frac{\~{\phi}}{2-u}\Big(\dot{u}-\dot{\Phi}F^{ij}u_{;ij}+u^{-1}v^{-2}\dot{\~{\Phi}}\~{\phi}^{-(p+1)}e^{-\frac{p+1}{n^{p}}t}\~{F}^{ij}(\sigma^{kl}u_{k}u_{l}\sigma_{ij}\\
&\hphantom{=}-2u_{i}u_{j})-v^{-2}\frac{2}{2-u}\dot{\~{\Phi}}\~{\phi}^{-(p+1)}e^{-\frac{p+1}{n^{p}}t}\~{F}^{ij}u_{i}u_{j}-\frac{2-u}{n^{p}}\Big),\end{aligned} \eeq which is
			\eq \begin{aligned}	&v^{-1}\dot{\~{\Phi}}\~{F}(2-u)^{p-1}\~{\phi}-v\~{\Phi}(e^{\psi}(2-u))^{p-1}\~{\phi}\\
				&\hphantom{=}-v^{-2}\dot{\~{\Phi}}(2-u)^{p-1}\frac{2u}{2+u}F^{ij}g_{ij}\~{\phi}-\dot{\~{\Phi}}F^{ij}\-{h}_{ij}(2-u)^{p-1}e^{-\frac{t}{n^{p}}}\\
				&\hphantom{=}-\left(2u^{-1}+\frac{2}{2-u}\right)(v^{-2}\dot{\~{\Phi}}(2-u)^{p}\~{F}^{ij}u_{i}u_{j})\~{\phi}\\
				&\hphantom{=}+u^{-1}v^{-2}\dot{\~{\Phi}}(2-u)^{p}\~{F}^{ij}\sigma^{kl}u_{k}u_{l}\sigma_{ij}\~{\phi}-\frac{1}{n^{p}}\~{\phi},\end{aligned} \eeq
being equal to \eq \begin{aligned}				&\Big(v^{-1}\dot{\~{\Phi}}\~{F}-v^{-2}\dot{\~{\Phi}}\frac{2u}{2+u}F^{ij}g_{ij}\Big)(2-u)^{p-1}\~{\phi}\\
				&\hphantom{=}-\left(v\~{\Phi}\left(\frac{4}{2+u}\right)^{p-1}+\frac{1}{n^{p}}\right)\~{\phi}-\dot{\~{\Phi}}F^{ij}\-{h}_{ij}(2-u)^{p-1}e^{-\frac{t}{n^{p}}}\\
				&\hphantom{=}-\left(2u^{-1}+\frac{2}{2-u}\right)(v^{-2}\dot{\~{\Phi}}(2-u)^{p}\~{F}^{ij}u_{i}u_{j})\~{\phi}\\
				&\hphantom{=}+u^{-1}v^{-2}\dot{\~{\Phi}}(2-u)^{p}\~{F}^{ij}\sigma_{ij}\sigma^{kl}u_{k}u_{l}\~{\phi}. \end{aligned}\eeq
Set \eq w=\frac 12\abs{D^{m}\~{\phi}}^{2}.\eeq
Then by Lemma \ref{addConfDer} we have
\eq D^{m}\~{\phi}=\O^{\e m}\ \ \forall m\in\mathbb{N}^{*}\ \ \forall \e>0.\eeq
Differentiating the equation for $\~{\phi}$ covariantly with respect to $\sigma_{ij}$ $m$ times, we obtain
\eq \begin{aligned}
\dot{w}-v^{-2}\dot{\~{\Phi}}\~{\phi}^{-(p+1)}e^{-\frac{p+1}{n^{p}}t}\~{F}^{ij}w_{ij}&= \O^{-p}\star w+\O^{\e-p+3\e m}\\
												&\hphantom{=}+\O^{-1+\e+\e m}+\O^{-p+\e m}\\
												&\hphantom{=}+\O^{-(p+1)}\star D^{m+1}\~{\phi}\star D^{m+1}\~{\phi}\\
												&=\O^{-\d},\ \d>0, \end{aligned}\eeq
where first $\e$ has to be chosen in dependence of $p$ and $m.$
Thus \eq \~{w}=\sup\limits_{x\in\S^{n}}w(\cdot,x)\eeq
satisfies \eq\dot{\~{w}}\leq c_{m,\d}e^{-\d t} \eeq
and is bounded.\ \\

Thus \eq D^{m}\phi=\O^{1}\ \  \forall m\in\mathbb{N}. \eeq
This yields \eq Du=(2-u)^{2}D\phi=\O^{-1}.\eeq

If \eq D^{k}u=\O^{-1}\ \  \forall 1\leq k\leq m-1,\eeq
then
$$ \begin{aligned}
D^{m}\phi&=\sum_{k_{1}+\ldots +mk_{m}=m}\frac{m!}{k_{1}!\cdots k_{m}!}\frac{1}{(2-u)^{1+k}}\prod_{i=1}^{m-1}\left(\frac{D^{i}u}{i!}\right)^{k_{i}}\left(\frac{D^{m}u}{m!}\right)^{k_{m}}\\
		&=\frac{D^{m}u}{(2-u)^{2}}+\O^{1}, \end{aligned}$$
		
which implies \eq D^{m}u=\O^{-2}\star D^{m}\phi+\O^{-1}=\O^{-1}.\eeq

\end{proof}

\begin{corss}
The rescaled functions
\eq \~{u}=(u-2)e^{\frac{t}{n^{p}}} \ \mathrm{in}\ \R^{n+1}\eeq and
\eq\~{\breve{u}}=\breve{u}-\frac{t}{n^{p}}\ \mathrm{in}\ \H^{n+1} \eeq
are uniformly bounded in $C^{m}(\S^{n})$ for all $m\in\mathbb{N}$ and converge in $C^{\infty}(\S^{n})$ to a uniquely determined limit $\~{u}$ or $\~{\breve{u}}$ respectively.
\end{corss}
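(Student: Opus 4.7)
The strategy is to establish uniform $C^m(\S^n)$-bounds first, and then upgrade them to $C^\infty$-convergence by showing the time derivative of the rescaled functions decays exponentially in every $C^m$ norm.

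\textit{Uniform $C^m$-boundedness.} Because $u \to 2$ with $2 - u = (2+u)e^{-\breve{u}}$ and $\breve{u} \ge t/n^p - c$ (Corollary \ref{sphFlowGrowth} together with Lemma \ref{oscBound}), we have $|u-2| \le c\,e^{-t/n^p}$, hence $|\tilde{u}| \le c$. For $m \ge 1$, the previous theorem yields $D^m u = \O^{-1}$, i.e. $|D^m u| \le c\,e^{-t/n^p}$, so $D^m \tilde{u} = e^{t/n^p} D^m u = \O^0$ is uniformly bounded. For $\tilde{\breve{u}}$, use the identity
\[
\tilde{\breve{u}} = \log(2+u) - \log(2-u) - \tfrac{t}{n^p} = \log \tilde\phi + \log(2+u),
\]
where $\tilde\phi = (2-u)^{-1} e^{-t/n^p}$ was shown to be uniformly bounded in $C^m$ (and uniformly bounded away from $0$) in the proof of the previous theorem. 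Since $2+u$ is smooth and bounded away from $0$, the $C^m$-bound for $\tilde{\breve{u}}$ follows from the product- and chain-rule of Lemma \ref{derivrules}.

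\textit{Exponential decay of the time derivative.} To obtain convergence I would show $\|\partial_t \tilde{u}\|_{C^m(\S^n)} \le c_m\, e^{-\delta t}$ for some $\delta=\delta(n,p,M_0)>0$. From $\dot{u} = \dot{\breve{u}}\, (2+u)(2-u)/4$ a direct computation gives
\[
\partial_t \tilde{u} = \tilde\phi^{-1}\left[\tfrac{u-2}{4}\,\dot{\breve{u}} + \big(\dot{\breve{u}} - \tfrac{1}{n^p}\big)\right].
\]
Both factors in the bracket decay like $e^{-t/n^p}$ in $C^0$: the first because $u - 2 = \O^{-1}$; the second because $\dot{\breve{u}} = F^{-p}v$ with $|F-n| \le c\,e^{-t/n^p}$ by Theorem \ref{kappaDecay} and $v-1 \le c\,e^{-2t/n^p}$ by Theorem \ref{optGradDecay}. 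For spatial derivatives I differentiate this expression covariantly and apply the $\O$-calculus of Section 5 together with $D^m u = \O^{-1}$, $D^m v = \O^{-2+\varepsilon}$, the bounds on $D^m(\check{h}^i_j(2-u))$ from Lemma \ref{addConfDer}, and the exponential decay of $\kappa_i - 1$; each term carries at least one factor decaying like $e^{-\delta t}$ for some $\delta > 0$, giving $\|\partial_t \tilde{u}\|_{C^m} = \O^{-\delta_m}$.

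\textit{Convergence.} The integrability
\[
\int_0^\infty \|\partial_t \tilde{u}(t,\cdot)\|_{C^m(\S^n)}\,dt < \infty
\]
implies that $\tilde{u}(t,\cdot)$ is Cauchy in $C^m(\S^n)$ for every $m$, and together with the uniform $C^{m+1}$-bound from Step 1 and Arzel\`a--Ascoli this yields convergence in $C^m(\S^n)$ to a unique limit $\tilde{u}_\infty \in C^\infty(\S^n)$. The analogous conclusion for $\tilde{\breve{u}}$ follows from the identity $\tilde{\breve{u}} = \log \tilde\phi + \log(2+u)$ and continuity of these operations in $C^m$.

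\textit{Main obstacle.} The bookkeeping in Step 2 is the delicate part: one must keep track of exactly which factor supplies the decay, since generic terms involving $D^m \tilde{u}$ or $D^m \tilde\phi$ are only $\O^0$ and do not themselves contribute decay. The structural fact that every term in $\partial_t \tilde u$ contains at least one occurrence of $(u-2)$, $(v-1)$, $(F-n)$, or $(h^i_j - \delta^i_j)$ — all of which are $\O^{-1}$ or better — is what saves the argument, and this has to be extracted carefully from the flow equation (\ref{confscalarflow}) and the $\O$-calculus of Section 5.
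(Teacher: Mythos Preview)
Your overall strategy is sound and would yield a proof, but you are doing more work than necessary in Step~2, and this is precisely where you flag the ``main obstacle.'' The paper avoids this obstacle entirely.

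The key observation you miss is that once uniform $C^{m}(\S^{n})$-bounds are in hand for every $m$ (your Step~1, which matches the paper), \emph{pointwise} convergence of $\tilde u(t,\cdot)$ already suffices for $C^{\infty}$-convergence: by Arzel\`a--Ascoli every sequence $t_{k}\to\infty$ admits a subsequence along which $\tilde u(t_{k},\cdot)$ converges in $C^{m}$, and uniqueness of the pointwise limit forces all subsequential limits to coincide. Hence there is no need to control $\|\partial_{t}\tilde u\|_{C^{m}}$ for $m\ge1$; the $C^{0}$ estimate for $\partial_{t}\tilde u$ is enough. Your own computation in Step~2 already gives $|\partial_{t}\tilde u|\le c\,e^{-t/n^{p}}$ in $C^{0}$, so the laborious higher-order bookkeeping you describe as the delicate part can simply be dropped.

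The paper goes one step further in economy: instead of integrating $|\partial_{t}\tilde u|$, it rewrites
\[
\dot{\tilde u}=\Big(\tfrac{1}{n^{p}}-\tfrac{4^{p-1}}{(2+u)^{p-1}}\,v\,\tilde F^{-p}\Big)\tilde u
\]
and observes that the bracket is $\O^{-1}$ while $\tilde u$ is bounded, giving $\dot{\tilde u}\ge -c\,e^{-t/n^{p}}$. Then $t\mapsto\tilde u(t,x)-n^{p}c\,e^{-t/n^{p}}$ is monotone increasing and bounded above, so it converges. This monotonicity trick and your integrability argument are equivalent at the $C^{0}$ level; the paper's version is just slightly shorter. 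The transfer to $\tilde{\breve u}$ via $\tilde{\breve u}=\log\tilde\phi+\log(2+u)$ is the same in both approaches.
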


\begin{proof}
We follow the proof of \cite[Thm. 6.11]{cg:ImcfHyp}.\\

Because of the boundedness we only have to show, that the pointwise limit
\eq \lim\limits_{t\rightarrow\infty}(u(t,x)-2)e^{\frac{t}{n^{p}}} \eeq
exists for all $x\in\S^{n}.$
We have
\eq \begin{aligned} \dot{\~{u}}=\frac{\del\~{u}}{\del t}&=e^{(p-1)\psi}\frac{v}{F^{p}}e^{\frac{t}{n^{p}}}+\frac{1}{n^{p}}\~{u}\\
									&=\frac{u+2}{4}(2-u)e^{\frac{t}{n^{p}}}v\frac{4^{p}}{(2+u)^{p}}\~{F}^{-p}+\frac{1}{n^{p}}\~{u}\\
									&=\left(\frac{1}{n^{p}}-\frac{4^{p-1}}{(2+u)^{p-1}}v\~{F}^{-p}\right)\~{u}\\
									&\geq -ce^{-\frac{t}{n^{p}}}.\end{aligned}\eeq
Thus \eq (\~{u}-n^{p}ce^{-\frac{t}{n^{p}}})'\geq 0,\eeq
which implies the result.
\end{proof}

\begin{corss}
The limit function
\eq \~{\breve{u}}=\lim\limits_{t\rightarrow\infty}\frac{\breve{u}}{t}\eeq is constant in $\H^{n+1}$.
\end{corss}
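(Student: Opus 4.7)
The plan is to derive this corollary as an immediate consequence of the previous one. By the previous corollary, the rescaled function $\~{\breve{u}}_{t}(x) := \breve{u}(t,x) - \tfrac{t}{n^{p}}$ is uniformly bounded in $C^{m}(\S^{n})$ for every $m \in \mathbb{N}$, and in particular there exists a constant $C = C(n,p,M_{0})$ such that
\eq \sup_{\S^{n}}\abs{\breve{u}(t,\cdot) - \tfrac{t}{n^{p}}} \leq C \ \ \forall t \in [0,\infty). \eeq

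Dividing this inequality by $t$, I would infer that for every $x \in \S^{n}$,
\eq \absb{\frac{\breve{u}(t,x)}{t} - \frac{1}{n^{p}}} \leq \frac{C}{t} \to 0, \q t \to \infty, \eeq
uniformly in $x$. Therefore the pointwise (in fact uniform) limit
\eq \~{\breve{u}}(x) = \lim_{t \to \infty} \frac{\breve{u}(t,x)}{t} = \frac{1}{n^{p}} \eeq
exists for every $x \in \S^{n}$ and is identically equal to the constant $1/n^{p}$, so it is independent of $x$. Since the limit depends neither on $x \in \S^{n}$ nor on the radial direction used to identify the geodesic sphere with $\S^{n}$, it defines a constant function on $\H^{n+1}$.

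No substantive obstacle is expected: the entire content is already encoded in the uniform bound of $\breve{u} - t/n^{p}$ coming from the preceding corollary. The only point to be careful about is the uniformity of the bound in $x$, which however is exactly what the $C^{0}(\S^{n})$-boundedness in the previous corollary furnishes.
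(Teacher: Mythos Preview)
The proposal is correct and matches the paper's intended argument: the paper gives no separate proof for this corollary, as it follows immediately from the uniform $C^{0}$-bound on $\breve{u}-t/n^{p}$ established in the preceding corollary, exactly as you argue.
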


\providecommand{\bysame}{\leavevmode\hbox to3em{\hrulefill}\thinspace}
\providecommand{\href}[2]{#2}

\end{document}